\tikzstyle{res}=[circle,thick,minimum size=4mm,draw=black,fill=red,inner sep=1pt]
\tikzstyle{non-res}=[circle,thick,minimum size=4mm,draw=black,inner sep=1pt]
\tikzstyle{light-res}=[circle,thick,minimum size=4mm,draw=black,fill=red!40,inner sep=1pt]
\tikzstyle{blue}=[circle,thick,minimum size=4mm,draw=black,fill=blue!20,inner sep=1pt]
\newtheorem{theorem}{Theorem}[section]
\def\argmin{{\hbox{argmin}}}
\theoremstyle{remark}
\newtheorem*{remark}{Remark}
\theoremstyle{theorem}
\newtheorem{cor}[theorem]{Corollary}
\newtheorem{definition}[theorem]{Definition}
\newtheorem{lem}[theorem]{Lemma}
\newtheorem{thm}[theorem]{Theorem}
\newcommand{\ftdim}{\textnormal{ftdim}}
\newcommand{\dist}{\textnormal{dist}}
\newcommand{\edim}{\textnormal{edim}}
\newcommand{\ftedim}{\textnormal{ftedim}}
\newcommand{\ldim}{\textnormal{ldim}}
\newcommand{\ftldim}{\textnormal{ftldim}}
\newcommand{\xdim}{\textnormal{xdim}}
\newcommand{\ftxdim}{\textnormal{ftxdim}}
\newcommand{\adim}{\textnormal{adim}}
\newcommand{\ftadim}{\textnormal{ftadim}}
\newcommand{\mdim}{\textnormal{mdim}}
\newcommand{\ek}{\textnormal{ek}}
\newcommand{\mc}{\textnormal{mc}}
\def\finf{\mathop{{\rm I}\kern -.27 em {\rm F}}\nolimits}
\newcommand{\Comments}{1}
\newcommand{\mynote}[2]{\ifnum\Comments=1\textcolor{#1}{#2}\fi}
\newcommand{\mytodo}[2]{\ifnum\Comments=1%
  \todo[linecolor=#1!80!black,backgroundcolor=#1,bordercolor=#1!80!black]{#2}\fi}
\begin{document}

\title{Fault tolerance for metric dimension and its variants}
\author{Jesse Geneson and Shen-Fu Tsai}

\maketitle

\begin{abstract}
    Hernando et al. (2008) introduced the fault-tolerant metric dimension $\ftdim(G)$, which is the size of the smallest resolving set $S$ of a graph $G$ such that $S-\left\{s\right\}$ is also a resolving set of $G$ for every $s \in S$. They found an upper bound $\ftdim(G) \le \dim(G) (1+2 \cdot 5^{\dim(G)-1})$, where $\dim(G)$ denotes the standard metric dimension of $G$. It was unknown whether there exists a family of graphs where $\ftdim(G)$ grows exponentially in terms of $\dim(G)$, until recently when Knor et al. (2024) found a family with $\ftdim(G) = \dim(G)+2^{\dim(G)-1}$ for any possible value of $\dim(G)$. We improve the upper bound on fault-tolerant metric dimension by showing that $\ftdim(G) \le \dim(G)(1+3^{\dim(G)-1})$ for every connected graph $G$. Moreover, we find an infinite family of connected graphs $J_k$ such that $\dim(J_k) = k$ and $\ftdim(J_k) \ge 3^{k-1}-k-1$ for each positive integer $k$. Together, our results show that \[\lim_{k \rightarrow \infty} \left( \max_{G: \text{ } \dim(G) = k} \frac{\log_3(\ftdim(G))}{k} \right) = 1.\] In addition, we consider the fault-tolerant edge metric dimension $\ftedim(G)$ and bound it with respect to the edge metric dimension $\edim(G)$, showing that \[\lim_{k \rightarrow \infty} \left( \max_{G: \text{ } \edim(G) = k} \frac{\log_2(\ftedim(G))}{k} \right) = 1.\] We also obtain sharp extremal bounds on fault-tolerance for adjacency dimension and $k$-truncated metric dimension. Furthermore, we obtain sharp bounds for some other extremal problems about metric dimension and its variants. In particular, we prove an equivalence between an extremal problem about edge metric dimension and an open problem of Erd\H{o}s and Kleitman (1974) in extremal set theory.
\end{abstract}

\section{Introduction}
Given an ordered set of vertices $S = \left\{v_1, v_2, \dots, v_k\right\}$ in a graph $G$, define the \textit{distance vector} of $u \in V(G)$ with respect to $S$ as \[d_S(u) = (\dist(u,v_1),\dist(u,v_2),\dots,\dist(u,v_k)).\] We say that $S$ is a \textit{resolving set} for a graph $G$ if $d_S(u)$ is unique for every vertex $u \in V(G)$. We say that such a set $S$ \textit{resolves} the vertices of $G$. Another name for a resolving set is a \textit{metric basis}. The \textit{metric dimension} $\dim(G)$ of $G$ is defined \cite{slater, harary} as the minimum size of any resolving set (i.e. metric basis) for $G$. Most papers about metric dimension restrict $G$ to be a connected graph. Here, we allow $G$ to be any simple undirected graph, and we say that $\dist(u,v) = \infty$ if $u$ and $v$ are in different connected components.

One of the original applications of metric dimension was for navigation in a graph-structured framework \cite{khuller}. In particular, suppose that a robot is navigating some terrain which can be modeled with a graph. Certain vertices in the graph have landmarks, and the robot has a sensor which can measure its distance in the graph to every landmark. The goal of metric dimension is to find the minimum number of landmarks so that the robot can always determine its location, for any possible vertex that it could visit. 

Hernando et al \cite{ftdim08} introduced a variant of metric dimension in which the landmarks are allowed to fail. The fault-tolerant metric dimension $\ftdim(G)$ of $G$ is the minimum size of a nonempty set of vertices $S$ such that $S-\left\{s\right\}$ resolves the vertices of $G$ for all $s \in S$, and we call such an $S$ a \textit{fault-tolerant resolving set}. Observe that the definition must require $S$ to be nonempty, or else we would have $\ftdim(G) = 0$ for all graphs $G$, since the definition of fault-tolerant resolving set is vacuously true for the empty set. A number of papers \cite{beardon, aem, eyr, ery, ftdim24} have investigated a generalization of this variant which allows larger faults. In particular, the $k$-metric dimension $\mdim_k(G)$ of $G$ is defined as the minimum size of a set of vertices $S$ such that $S-T$ resolves $G$ for all $T \subseteq S$ with $|T| = k-1$. Alternatively, we say that $S$ is a $k$-metric basis for $G$ if every pair of vertices in $G$ is distinguished by at least $k$ vertices of $S$. We can define $\mdim_k(G)$ as the minimum size of a $k$-metric basis for $G$. Note that $\dim(G) = \mdim_1(G)$ and $\ftdim(G) = \mdim_2(G)$.

\begin{remark}
    For all graphs $G$, we have $\ftdim(G) \ge \dim(G)+1$. Indeed, suppose that $S$ is a fault-tolerant resolving set for $G$ with $|S| = \ftdim(G)$. Then, for all $s \in S$, the set $S-\{s\}$ is a resolving set for $G$, so $\dim(G) \le |S|-1 = \ftdim(G)-1$.
\end{remark}

It is easy to find graphs that attain this bound, e.g., $G = K_n$. As for bounds in the other direction, Hernando et al \cite{ftdim08} proved that $\ftdim(G) \le \dim(G) (1+2 \cdot 5^{\dim(G)-1})$ for all graphs $G$. In general we have $\ftdim(G) \ge \dim(G)+1$, but it was not known whether there exists a family of graphs where $\ftdim(G)$ grows exponentially with respect to $\dim(G)$, until Knor et al \cite{ftdim24} proved that there exists a family of connected graphs $G$ with $\ftdim(G) = \dim(G)+2^{\dim(G)-1}$, for any possible value of $\dim(G)$. Given the results of Hernando et al and Knor et al, a remaining problem is to determine how large $\ftdim(G)$ can grow with respect to $\dim(G)$.

For each positive integer $t$, let $m(t)$ be the maximum possible value of $\ftdim(G)$ over all nonempty graphs $G$ with $\dim(G) = t$. The results of Hernando et al and Knor et al show that \[t+2^{t-1} \le m(t) \le t(1+2 \cdot 5^{t-1}).\] Given that the known exponential upper and lower bounds have different bases ($5$ and $2$ respectively), we investigated the problem of determining the true base of the exponent. In particular, we focused on finding the limit \[\lim_{t \rightarrow \infty} \frac{\log(m(t))}{t}.\] In this paper, we show that the true base of the exponent is $3$, i.e., \[\lim_{t \rightarrow \infty} \frac{\log_3(m(t))}{t} = 1.\] We also consider fault-tolerance for some other variants of metric dimension, and we investigate the analogous problem of determining how large the fault-tolerant versions can grow with respect to their corresponding variants. 

For the upper bounds in this paper, we create a fault-tolerant resolving set from an arbitrary resolving set for each variant of metric dimension. Our general strategy for creating the fault-tolerant resolving set is taking the closed neighborhood of the resolving set, and then adding a small number of vertices to handle any remaining unresolved pairs. For the lower bounds in this paper, we construct graphs for which the smallest possible fault-tolerant resolving set is nearly the same size as the closed neighborhood in the upper bound.

For each variant that we consider, we show that the base of the exponent for fault-tolerance is the same as the base of the exponent for maximum degree. In the case of standard metric dimension, it is known \cite{gkl} that the maximum possible degree of a vertex in a graph of metric dimension $k$ is $3^k - 1$, and the maximum possible degree of a vertex in a resolving set of size $k$ is at most $3^{k-1}$. 

Many variants of metric dimension have been investigated in the literature. For example, Kelenc et al \cite{kelenc} introduced a variant called the edge metric dimension. For an edge $e = \left\{u,v\right\} \in E(G)$, define $\dist(e, w) = \min(\dist(w,u),\dist(w,v))$. Given an ordered set of vertices $S = \left\{v_1, v_2, \dots, v_k\right\}$ in a graph $G$, we define the \textit{distance vector} of $e \in E(G)$ with respect to $S$ as \[d_S(u) = (\dist(e,v_1),\dist(e,v_2),\dots,\dist(e,v_k)).\] We say that $S$ is an \textit{edge resolving set} for a graph $G$ if $d_S(e)$ is unique for every edge $e \in E(G)$. We say that such a set $S$ \textit{resolves} the edges of $G$. Another name for an edge resolving set is an \textit{edge metric basis}. The \textit{edge metric dimension} $\edim(G)$ of $G$ is defined as the minimum size of any edge resolving set (i.e. edge metric basis) for $G$. It was shown in \cite{gmd} that the maximum possible degree of a vertex in a graph $G$ with $\edim(G) = k$ is $2^k$. We show that the maximum possible degree of a vertex in an edge resolving set of size $k$ is $2^{k-1}$.

As with standard metric dimension, the fault-tolerant edge metric dimension $\ftedim(G)$ of $G$ is defined \cite{ftedim21} to be the minimum size of a nonempty set of vertices $S$ such that $S-\left\{s\right\}$ resolves the edges of $G$ for all $s \in S$. We prove that \[\lim_{k \rightarrow \infty} \left( \max_{G: \text{ } \edim(G) = k} \frac{\log_2(\ftedim(G))}{k} \right) = 1.\]

Jannesari and Omoomi \cite{jannesari} introduced the adjacency dimension. Given an ordered set of vertices $S = \left\{v_1, v_2, \dots, v_k\right\}$ in a graph $G$, we define the \textit{adjacency vector} of $u \in V(G)$ with respect to $S$ as \[a_S(u) = (\min(2,\dist(u,v_1)),\min(2,\dist(u,v_2)),\dots,\min(2,\dist(u,v_k))).\] We say that $S$ is an \textit{adjacency resolving set} for a graph $G$ if $a_S(u)$ is unique for every vertex $u \in V(G)$. We say that such a set $S$ \textit{$1$-resolves} the vertices of $G$. Another name for an adjacency resolving set is an \textit{adjacency basis}. 

The \textit{adjacency dimension} $\adim(G)$ of $G$ is defined as the minimum size of any adjacency resolving set (i.e. adjacency basis) for $G$. If we extend the robot analogy, then the goal of the adjacency dimension is to find the minimum number of landmarks so that the robot can always determine its location, assuming that its sensors can only detect landmarks on vertices that are within a distance of $1$. It was shown in \cite{gy} that the maximum possible degree of a vertex in a graph $G$ with $\adim(G) = k$ is $2^k+k-1$. We show that the maximum possible degree of a vertex in an adjacency resolving set of size $k$ is $2^{k-1}+k-1$. The adjacency dimension has also been extended to a variant called the $k$-truncated metric dimension \cite{beardon, aem, eyr, fr, frongillo}, where the sensors can detect landmarks on vertices that are within a distance of $k$. 

Given an ordered set of vertices $S = \left\{v_1, v_2, \dots, v_j\right\}$ in a graph $G$, we define the \textit{$k$-truncated vector} of $u \in V(G)$ with respect to $S$ as \[d_{S,k}(u) = (\min(k+1,\dist(u,v_1)),\min(k+1,\dist(u,v_2)),\dots,\min(k+1,\dist(u,v_k))).\] We say that $S$ is a \textit{$k$-truncated resolving set} for a graph $G$ if $d_{S,k}(u)$ is unique for every vertex $u \in V(G)$. We say that such a set $S$ \textit{$k$-resolves} the vertices of $G$. Another name for a $k$-truncated resolving set is an \textit{$k$-truncated metric basis}. The \textit{$k$-truncated metric dimension} $\dim_k(G)$ of $G$ is defined as the minimum size of any $k$-truncated resolving set (i.e. $k$-truncated metric basis) for $G$. Note that for all graphs $G$, we have $\dim_1(G) = \adim(G)$ and $\dim_k(G) = \dim(G)$ for all $k$ that are sufficiently large with respect to $G$. In general, we have $\dim_k(G) \ge \dim(G)$ for all graphs $G$. It was shown in \cite{frongillo} that the maximum possible degree of a vertex in a graph $G$ with $\dim_k(G) = j$ is $3^j - 1$ for all $k \ge 2$, and $2^j+j-1$ in the case that $k = 1$. We show that the maximum possible degree of a vertex in a $k$-resolving set of size $j$ is $3^{j-1}$.

As with standard metric dimension, we define the fault-tolerant adjacency dimension $\ftadim(G)$ of $G$ to be the minimum size of a nonempty set of vertices $S$ such that $S-\left\{s\right\}$ $1$-resolves the vertices of $G$ for all $s \in S$. Similarly, we define the fault-tolerant $k$-truncated metric dimension $\ftdim_k(G)$ of $G$ to be the minimum size of a nonempty set of vertices $S$ such that $S-\left\{s\right\}$ $k$-resolves the vertices of $G$ for all $s \in S$. As in the case without fault-tolerance, we have $\ftdim_1(G) = \ftadim(G)$ and $\ftdim_k(G) = \ftdim(G)$ for all $k$ that are sufficiently large with respect to $G$. Moreover, we have $\ftdim_k(G) \ge \ftdim(G)$ for all graphs $G$. In this paper, we prove for all $k \ge 2$ that \[\lim_{j \rightarrow \infty} \left( \max_{G: \text{ } \dim_k(G) = j} \frac{\log_3(\ftdim_k(G))}{j} \right) = 1.\] However, for the case when $k = 1$, we show that \[\lim_{j \rightarrow \infty} \left( \max_{G: \text{ } \dim_1(G) = j} \frac{\log_2(\ftdim_1(G))}{j} \right) = 1.\]

There is a variant of metric dimension called local metric dimension, denoted $\ldim(G)$, which was introduced by Okamoto et al \cite{okamoto}.  It is like standard metric dimension, except we only have to distinguish pairs of adjacent vertices instead of all pairs of vertices \cite{abrishami, br, ghalavand, ghalavand0}. In particular, we say that $S$ is a \textit{local resolving set} for a graph $G$ if $d_S(u) \neq d_S(v)$ for all $\left\{u,v\right\} \in E(G)$. We say that such a set $S$ \textit{locally resolves} the vertices of $G$. The \textit{local metric dimension} $\ldim(G)$ is the minimum size of any local resolving set for $G$. For example, $\ldim(K_{1,n}) = 1$ for all $n$, since any vertex is a local resolving set for a star graph. 

We define $\ftldim(G)$ analogously to the other variants. Specifically, the fault-tolerant local metric dimension $\ftldim(G)$ of $G$ is the minimum size of a nonempty set of vertices $S$ such that $S-\left\{s\right\}$ locally resolves the vertices of $G$ for all $s \in S$. We prove some characterization results about fault tolerance for local metric dimension.

Consider an arbitrary variant $\xdim(G)$ of metric dimension, and suppose that $\xdim(G)$ is defined to be the minimum size of an $\xdim$-resolving set for $G$. We can define $\ftxdim(G)$ to be the minimum size of a nonempty set of vertices $S$ such that $S-\left\{s\right\}$ is an $\xdim$-resolving set for all $s \in S$. For each variant $\xdim(G)$ of metric dimension, we have the following bound which we use several times in this paper.

\begin{lem}\label{lem:ftxdim}
    For all graphs $G$ and for each variant $\xdim(G)$ of metric dimension, we have $ftxdim(G) \ge \xdim(G)+1$.
\end{lem}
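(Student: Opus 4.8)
The plan is to reproduce the argument in the Remark for $\ftdim$, which works verbatim for every variant once one observes that it used nothing special about distance vectors. First I would take a minimum-size fault-tolerant $\xdim$-resolving set $S$, so that $|S| = \ftxdim(G)$; by the definition of $\ftxdim(G)$ this $S$ is required to be nonempty (this nonemptiness requirement is precisely why the definition of fault-tolerant resolving set excludes the vacuous empty set, as discussed in the introduction). Since $S \neq \emptyset$, I can fix an arbitrary element $s \in S$.

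Next, by the defining property of a fault-tolerant $\xdim$-resolving set, the set $S \setminus \{s\}$ is itself an $\xdim$-resolving set for $G$. Because $\xdim(G)$ is defined as the minimum size of any $\xdim$-resolving set, this yields $\xdim(G) \le |S \setminus \{s\}| = |S| - 1 = \ftxdim(G) - 1$, which rearranges to the claimed inequality $\ftxdim(G) \ge \xdim(G) + 1$.

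There is essentially no obstacle here; the only point that warrants (minor) attention is that the argument invokes nothing about the particular variant beyond two features built into the general setup: (i) the closure property encoded in the word ``fault-tolerant,'' namely that deleting any single vertex from $S$ still leaves an $\xdim$-resolving set, and (ii) the fact that $S$ is nonempty. In particular no monotonicity of $\xdim$-resolving sets under passing to supersets is needed, since the bound never uses that $S$ itself is resolving — only the smaller set $S \setminus \{s\}$. (If $G$ admits no fault-tolerant $\xdim$-resolving set at all, the inequality holds vacuously.)
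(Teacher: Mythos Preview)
Your proof is correct and follows essentially the same approach as the paper: take a minimum fault-tolerant $\xdim$-resolving set $S$, delete an element $s$, and observe that $S\setminus\{s\}$ is an $\xdim$-resolving set of size $\ftxdim(G)-1$. The only difference is that you make the nonemptiness of $S$ explicit, which the paper's proof leaves implicit.
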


\begin{proof}
    Suppose that $S$ is a fault-tolerant $\xdim$-resolving set for $G$ with $|S| = \ftxdim(G)$. Then, for all $s \in S$, the set $S-\{s\}$ is an $\xdim$-resolving set for $G$, so $\xdim(G) \le |S|-1 = \ftxdim(G)-1$.
\end{proof}

In addition to the extremal bounds in this paper, we also characterize the graphs $G$ for which $\ftdim(G) = 2$, $\ftdim(G) = n$, $\ftedim(G) = 2$, $\ftedim(G) = n$, $\ftadim(G) = 2$, $\ftadim(G) = n$, $\ftdim_k(G) = 2$, $\ftdim_k(G) = n$, $\ftldim(G) = 2$, and $\ftldim(G) = n$. For each variant $\xdim(G)$ of metric dimension that we consider in this paper, we show that for all graphs $G$ we have $\ftxdim(G) = 2$ if and only if $\xdim(G) = 1$.



In order to prove our bounds for fault tolerance, we also derive sharp bounds for the maximum possible degree of vertices in resolving sets of size $k$, edge resolving sets of size $k$, and $k$-resolving sets of size $j$. We also generalize these bounds for resolving sets by determining the maximum possible number of vertices within distance $j$ of a vertex in a resolving set of size $k$.

In addition to the extremal results about maximum degree, we also consider the maximum possible clique number of graphs with edge metric dimension at most $k$. The best known bounds for this quantity are $O(2^{k/2})$ and $\Omega((\frac{8}{3})^{k/6})$. We show that this problem is equivalent to an open problem of Erd\H{o}s and Kleitman \cite{ek74}, the maximum possible size of a family of subsets of $\left\{1,2,\dots, k\right\}$ such that all pairwise unions are distinct. 

We prove the results on fault tolerance for standard metric dimension in Section~\ref{sec:ftdim}, including extremal bounds and characterizations. In Section~\ref{sec:edim}, we prove the results on fault tolerance for edge metric dimension. We obtain the results on fault tolerance for adjacency dimension in Section~\ref{sec:adim}. We generalize the fault tolerance results for adjacency dimension to truncated metric dimension in Section~\ref{sec:ftdimk}. We discuss fault tolerance for local metric dimension in Section~\ref{sec:ftldim}.

In Section~\ref{sec:dim_extremal}, we determine the maximum possible number of vertices within distance $j$ of a vertex in a resolving set of size $k$, as well as the maximum possible number of vertices within distance $j$ of any vertex in a graph of metric dimension $k$. We prove the equivalence between the maximum possible clique number of a graph of edge metric dimension at most $k$ and the problem of Erd\H{o}s and Kleitman \cite{ek74} in Section~\ref{sec:equiv}. Finally, in Section~\ref{sec:conclusion}, we discuss some related open problems and future research directions.

\section{Fault tolerance for standard metric dimension}\label{sec:ftdim}

If $G$ is a graph of order $1$, then clearly we have $\dim(G) = 0$ and $\ftdim(G) = 1$. If $G$ has order at least $2$, then we have $\dim(G) > 0$ and $\ftdim(G) > 1$. In order to characterize the graphs $G$ with $\ftdim(G) = 2$, we start with a characterization of the graphs $G$ with $\dim(G) = 1$. This characterization is well-known for connected graphs, but we include a proof since this is for all graphs. 

\begin{thm}\label{thm:dim_1}
For all graphs $G$, we have $\dim(G) = 1$ if and only if $G$ has order at least $2$, at most $2$ connected components, at most $1$ connected component of size at least $2$, and all connected components are singletons or paths. 
\end{thm}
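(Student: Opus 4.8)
The plan is to prove both directions separately, with the forward direction (if $\dim(G)=1$ then $G$ has the stated structure) being the main content. First I would establish the easy reverse direction: if $G$ is a single path $P_n$ with $n \ge 2$, then an endpoint $v$ of the path resolves $G$, since along a path the distances from an endpoint are $0,1,2,\dots,n-1$, all distinct; adding an isolated vertex $w$ as a second component does not break this, because $w$ has distance $\infty$ from $v$ while every vertex of the path has finite distance, so $d_{\{v\}}$ is still injective on $V(G)$. If $G$ is two singletons, then $\dim(G) = 1$ as well (one vertex has distance $0$ to itself and $\infty$ to the other). So any $G$ of the stated form has $\dim(G) \le 1$, and since $G$ has order at least $2$ we have $\dim(G) \ge 1$, hence equality.

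For the forward direction, suppose $\dim(G) = 1$, so there is a vertex $v$ with $d_{\{v\}}$ injective, i.e., all vertices of $G$ lie at pairwise distinct distances from $v$. First, $G$ has order at least $2$ since $\dim$ of a one-vertex graph is $0$. Let $C$ be the connected component containing $v$. Within $C$, the distances from $v$ are finite and pairwise distinct nonnegative integers; if $C$ has a vertex at distance $d$, then (by connectedness and the triangle inequality along a shortest path) it has a vertex at every distance $0,1,\dots,d$ from $v$, so the distances realized in $C$ are exactly $\{0,1,\dots,\ecc(v)\}$ and each is realized exactly once. This forces $C$ to have exactly one vertex at each distance from $v$, which I claim means $C$ is a path with $v$ as an endpoint: the unique vertex at distance $i$ must be adjacent only to the unique vertex at distance $i-1$ (its parent on a shortest path) and possibly the unique vertex at distance $i+1$; any other adjacency would either create a second vertex at some distance or a vertex at distance $i$ adjacent to distance $i$, contradicting uniqueness — I would spell out that there are no edges within a distance-level (only one vertex there) and no edges skipping a level. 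Finally, any other connected component $C'$ of $G$ consists of vertices at distance $\infty$ from $v$; since distance vectors must be distinct and all equal $(\infty)$, $C'$ has at most one vertex, so $C'$ is a singleton and there is at most one such component.

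The main obstacle I anticipate is the careful argument that "exactly one vertex at each distance from $v$" rigorously implies "$C$ is a path with endpoint $v$" — one must rule out extra edges both within a distance level (trivial here, since each level is a single vertex) and between non-consecutive levels (an edge from the level-$i$ vertex to the level-$j$ vertex with $j \ge i+2$ would make the level-$j$ vertex at distance $i+1 < j$, a contradiction), and also confirm that the level-$(i+1)$ vertex is in fact adjacent to the level-$i$ vertex (true since a shortest path from $v$ to it passes through the unique level-$i$ vertex). Assembling these observations shows $C = P_{\ecc(v)+1}$. Combining with the component analysis finishes the proof; the boundary cases (order exactly $2$, giving either $K_2 = P_2$ or two singletons) are covered by these arguments with $\ecc(v) \in \{0,1\}$.
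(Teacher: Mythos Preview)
Your proof is correct. The backward direction matches the paper's. For the forward direction you take a somewhat different route: you analyze the BFS level sets from the landmark $v$, show that each distance $0,1,\dots,\ecc(v)$ is realized exactly once in the component $C$ containing $v$, and deduce directly that $C$ is a path with endpoint $v$; the paper instead argues via degree constraints (the landmark has degree at most $1$, every vertex has degree at most $2$, no component is a cycle) to conclude each component is a singleton or a path. Your level-set argument is slightly more structural and gives a bit more for free---it immediately pins down that $v$ is an endpoint of the path and that the non-landmark component is a singleton---whereas the paper's degree argument is perhaps quicker to state but needs the separate ``no cycle'' check. Both are short and elementary; there is no real advantage either way.
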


\begin{proof}
Suppose that $G$ has order at least $2$, at most $2$ connected components, at most $1$ connected component of size at least $2$, and all connected components of $G$ are singletons or paths. Since $G$ has order at least $2$, we have $\dim(G) > 0$. 

If $G$ has no connected component of size at least $2$, then $G$ must consist of two singletons. In this case, we let $S$ consist of a single vertex of $G$, which is clearly a resolving set. If $G$ has a connected component $C$ of size at least $2$, then $C$ is a path. Let $S$ consist of a single endpoint of $C$. Again, $S$ is clearly a resolving set. 

For the other direction, suppose that $\dim(G) = 1$. Since $\dim(G) > 0$, $G$ must have order at least $2$. There are at most $2$ connected components in $G$, since any vertex $v$ cannot resolve any pair of vertices $x,y$ such that $v,x,y$ are all in different connected components. At most $1$ connected component in $G$ may have size at least $2$, since any vertex $v$ cannot resolve any pair of vertices $x,y$ such that $v$ is in a different connected component from $x$ and $y$. 

Any landmark $v$ in a resolving set of size $1$ for $G$ must have degree at most $1$, or else $v$ would not resolve its neighbors. Any vertex $u$ in $G$ must have degree at most $2$, or else $v$ would not resolve the neighbors of $u$. Moreover, no connected component of $G$ is a cycle, or else $v$ could not resolve the cycle. Thus, every connected component of $G$ is a singleton or a path. 
\end{proof}

As for the maximum possible values of $\dim(G)$, it is easy to see that $\dim(G) \le n-1$ for all graphs $G$ of order $n$, since any set of $n-1$ vertices in $G$ is clearly a resolving set. In the next result, we characterize the graphs $G$ with $\dim(G) = n-1$. This characterization was already found in the case of connected graphs \cite{chartrand}.

\begin{thm}
    For all graphs $G$ of order $n$, we have that $\dim(G) = n-1$ if and only if $G$ is a complete graph or a graph with no edges.
\end{thm}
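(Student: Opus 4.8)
The plan is to prove both directions of the equivalence by analyzing the structure of distance vectors. For the easy direction, if $G$ is complete then all pairs of vertices are at distance $1$ from each other, so any vertex $v$ not in a resolving set $S$ has distance vector $(1,1,\dots,1)$ with respect to $S$; hence at most one vertex can be omitted and $\dim(G) \ge n-1$, which combined with the trivial upper bound gives equality. If $G$ has no edges, then all pairwise distances are $\infty$, and the same argument applies: every vertex outside $S$ has the all-$\infty$ distance vector, so $\dim(G) = n-1$. (Both degenerate cases $n = 1$ are easily checked directly if needed.)

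For the forward direction, I would argue the contrapositive: suppose $G$ is neither complete nor edgeless, and show $\dim(G) \le n-2$ by exhibiting a resolving set of size $n-2$. Since $G$ is not edgeless, it has an edge $\{u,v\}$; since $G$ is not complete, there are two vertices at distance at least $2$ (possibly $\infty$). The key step is to find two specific vertices $x \neq y$ that can be simultaneously excluded. I would take the set $S = V(G) \setminus \{x,y\}$ for a suitable choice of $x,y$, noting that the only vertices whose distance vectors with respect to $S$ are not automatically distinct are $x$ and $y$ themselves (every other vertex $w \in S$ has a $0$ in the coordinate indexed by $w$, while $x$ and $y$ have strictly positive entries everywhere on $S$). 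So it suffices to choose $x,y$ with $d_S(x) \neq d_S(y)$, i.e., some vertex $z \in S$ with $\dist(z,x) \neq \dist(z,y)$.

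The main obstacle — really the only substantive point — is showing that such a pair $x, y$ and such a witness $z$ exist whenever $G$ is neither complete nor edgeless. I would handle this by cases on the diameter/component structure. If $G$ is disconnected with at least two nontrivial components, or has an isolated vertex alongside an edge, one readily picks $x,y$ in different "position classes"; if $G$ is connected with diameter at least $2$ (or disconnected but with an edge somewhere), take a shortest path and let $x$ be an endpoint of a diametral pair and $y$ a neighbor of $x$ along that path, with $z$ the other endpoint of the diametral pair, so that $\dist(z,x) > \dist(z,y)$. One must double-check that $z \in S$, i.e. $z \notin \{x,y\}$, which holds because the diameter is at least $2$. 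A clean way to unify the cases is to pick any two vertices $a,b$ with $\dist(a,b) \ge 2$ (including $\dist(a,b) = \infty$), let $y$ be a vertex on a shortest $a$–$b$ path adjacent to $a$ (or, if $a,b$ are in different components, any neighbor of $a$, which exists if $a$ has an edge — otherwise swap roles or use the disconnected-component argument), set $x = a$ and $z = b$; then $\dist(b,x) = \dist(a,b) \neq \dist(a,b) - 1 = \dist(b,y)$ in the connected case, or $\infty$ versus finite in the disconnected case, and $b \notin \{x,y\}$.

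Finally, I would also dispatch the boundary case $n = 1$ (where $\dim(G) = 0 = n - 1$ and $G$ is both complete and edgeless, so the equivalence holds vacuously) and $n = 2$ (both vertices of $K_2$ or of the edgeless graph on two vertices give $\dim = 1 = n-1$, matching the characterization), so that the statement holds for all orders $n$ and all simple undirected graphs, not merely connected ones.
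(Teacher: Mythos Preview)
Your backward direction is fine, and your forward-direction strategy (the contrapositive: exhibit two vertices $x,y$ such that $V(G)\setminus\{x,y\}$ still resolves) is essentially what the paper does too. However, your execution in the disconnected case has a real error. With your assignment $x=a$, $y$ a neighbor of $a$, and $z=b$ in a different component from $a$, the vertex $y$ lies in the \emph{same} component as $a=x$, so $\dist(b,x)=\dist(b,y)=\infty$ and $z=b$ fails to distinguish them; your claimed ``$\infty$ versus finite'' is simply false under this setup. The easy repair is to swap the roles of $y$ and $z$ in that case: pick $a$ in a component containing an edge, set $x=a$ and $y=b$ (in a different component), and take $z$ to be a neighbor of $a$; then $\dist(z,x)=1$ while $\dist(z,y)=\infty$, and $z\notin\{x,y\}$.

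The paper sidesteps the shortest-path machinery and the connected/disconnected split altogether by working directly with adjacency. Rephrased as a contrapositive, its argument is: if $G$ is neither complete nor edgeless, then some vertex $u$ has both a neighbor $v$ and a non-neighbor $w$ (this follows immediately from the existence of an edge and a non-edge); since $\dist(u,v)=1\neq\dist(u,w)$, the set $V(G)\setminus\{v,w\}$ resolves $G$. This single observation handles all cases uniformly. Your diametral-pair argument is reaching for the same conclusion but detours through path structure that is not needed here and, as written, breaks down precisely when no $a$--$b$ path exists.
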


\begin{proof}
If $G$ is a complete graph or a graph with no edges of order $n$, then $\dim(G) \ge n-1$ since any set of vertices in $G$ of size $n-2$ would not distinguish the two vertices that are not in the set. Thus, $\dim(G) = n-1$ if $G$ is a complete graph or a graph with no edges of order $n$. 

In the other direction, suppose that $G$ is a graph of order $n$ with $\dim(G) = n-1$. The theorem is clearly true for $n = 1$, so we suppose that $n \ge 2$. Let $u,v$ be two arbitrary vertices of $G$. We split the proof into two cases. 

In the first case, suppose that $\{u,v\} \in E(G)$. Then for all $x \in V(G)$, we must have $\{u,x\} \in E(G)$, or else the set $V(G)- \{v,x\}$ would resolve $G$. However, this implies for all $x,y \in V(G)$ that we must have $\{x,y\} \in E(G)$, or else the set $V(G)-\{u,y\}$ would resolve $G$. Thus, $G$ is a complete graph. 

In the second case, suppose that $\{u,v\} \not \in E(G)$. By an analogous argument to the first case, $G$ must have no edges.
\end{proof} 

Now, we turn to fault tolerance for standard metric dimension. In the next result, we characterize the graphs $G$ of order at least $2$ which attain the minimum possible value of $\ftdim(G)$.

\begin{thm}
For any graph $G$, $\ftdim(G) = 2$ if and only if $G$ has order at least $2$, at most $2$ connected components, at most $1$ connected component of size at least $2$, and all connected components are singletons or paths.
\end{thm}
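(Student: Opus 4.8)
The plan is to reduce the statement to Theorem~\ref{thm:dim_1} by proving that $\ftdim(G) = 2$ if and only if $\dim(G) = 1$; the stated structural condition is then exactly the characterization of $\dim(G) = 1$ supplied by that theorem.

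For the ``only if'' direction, suppose $\ftdim(G) = 2$ and fix a fault-tolerant resolving set $S = \{u,v\}$ of size $2$. Removing $u$ shows that $\{v\} = S - \{u\}$ is a resolving set, so $\dim(G) \le 1$. Conversely, if $G$ had order at most $1$ then $\ftdim(G) \le 1$ (a graph of order $1$ has $\ftdim = 1$, and a graph of order $0$ has no nonempty vertex set at all), contradicting $\ftdim(G) = 2$; hence $G$ has order at least $2$ and $\dim(G) \ge 1$. Combining the two inequalities gives $\dim(G) = 1$, and Theorem~\ref{thm:dim_1} then yields the structural description.

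For the ``if'' direction, assume $G$ satisfies the structural condition, so $\dim(G) = 1$ by Theorem~\ref{thm:dim_1}, and hence $\ftdim(G) \ge \dim(G) + 1 = 2$ by Lemma~\ref{lem:ftxdim}. It remains to produce a fault-tolerant resolving set of size $2$, and for this it suffices to find two distinct vertices $u, v$ such that $\{u\}$ and $\{v\}$ are each resolving sets of $G$, since then $S = \{u,v\}$ works. By the structural condition $G$ is one of: two isolated vertices; a single path on at least two vertices; or a single path on at least two vertices together with one isolated vertex. In the first case both vertices individually resolve $G$; in the latter two cases the two endpoints of the path component each resolve $G$, because the vertices of a path receive pairwise distinct distances from an endpoint and an isolated vertex (if present) is the unique vertex at distance $\infty$. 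In all cases the desired pair exists, so $\ftdim(G) \le 2$.

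The argument is essentially bookkeeping on top of Theorem~\ref{thm:dim_1}; the only point that needs a little care is ruling out graphs of order at most $1$ in the ``only if'' direction so that one obtains $\dim(G) = 1$ rather than merely $\dim(G) \le 1$. I do not anticipate a genuine obstacle.
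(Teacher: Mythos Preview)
The proposal is correct and follows essentially the same approach as the paper: both directions reduce to Theorem~\ref{thm:dim_1} via the equivalence $\ftdim(G)=2 \Leftrightarrow \dim(G)=1$, with the same case analysis (two singletons versus a path component whose endpoints each resolve) to exhibit a fault-tolerant resolving set of size $2$. Your explicit removal of a landmark to get $\dim(G)\le 1$ is just the content of Lemma~\ref{lem:ftxdim}, which the paper invokes directly.
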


\begin{proof}
Suppose that $G$ has order at least $2$, at most $2$ connected components, at most $1$ connected component of size at least $2$, and all connected components of $G$ are singletons or paths. Since $G$ has order at least $2$, we have $\dim(G) > 0$, so $\ftdim(G) > 1$. 

If $G$ has no connected component of size at least $2$, then $G$ must consist of two singletons. In this case, we let $S = V(G)$, which is clearly a fault-tolerant resolving set. If $G$ has a connected component $C$ of size at least $2$, then $C$ is a path. Let $S$ consist of the endpoints of $C$. For all $s \in S$, the set $S’ = S-\{s\}$ consists of a single endpoint of C. Since $S’$ resolves $G$, we have that $S$ is a fault-tolerant resolving set. 

For the other direction, suppose that $\ftdim(G) = 2$. Then, $G$ must have order at least $2$, and by Lemma~\ref{lem:ftxdim} we must have $\dim(G) \le \ftdim(G)-1 = 1$. Thus, by Theorem~\ref{thm:dim_1}, $G$ has at most $2$ connected components, at most $1$ connected component of size at least $2$, and all connected components of $G$ are singletons or paths.
\end{proof}

\begin{cor}
    For all graphs $G$, we have $\ftdim(G) = 2$ if and only if $\dim(G) = 1$.
\end{cor}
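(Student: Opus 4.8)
The plan is to read this off directly from the two characterization theorems already established. By Theorem~\ref{thm:dim_1}, the condition $\dim(G) = 1$ is equivalent to the structural property that $G$ has order at least $2$, at most $2$ connected components, at most $1$ connected component of size at least $2$, and every connected component is a singleton or a path. By the theorem immediately preceding this corollary, the condition $\ftdim(G) = 2$ is equivalent to \emph{exactly the same} structural property. Chaining these two equivalences gives $\ftdim(G) = 2$ if and only if $\dim(G) = 1$, with nothing further to verify beyond noting that the two lists of conditions are word-for-word identical.

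Alternatively, one can give a short self-contained argument that avoids quoting the full structural characterization. For the forward direction, if $\ftdim(G) = 2$ then $G$ has order at least $2$ and, by Lemma~\ref{lem:ftxdim}, $\dim(G) \le \ftdim(G) - 1 = 1$; since a graph of order at least $2$ has $\dim(G) \ge 1$, we get $\dim(G) = 1$. For the reverse direction, suppose $\dim(G) = 1$. Then $\ftdim(G) \ge \dim(G) + 1 = 2$ by Lemma~\ref{lem:ftxdim}, so it suffices to exhibit a fault-tolerant resolving set of size $2$. By Theorem~\ref{thm:dim_1}, $G$ is either a pair of isolated vertices or consists of one path component together with at most one isolated vertex; in the first case $S = V(G)$ works, and in the second case the two endpoints of the path form a set $S$ both of whose $1$-element deletions are resolving sets (again by Theorem~\ref{thm:dim_1}). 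Either way $\ftdim(G) = 2$. I expect no real obstacle here; the statement is essentially a bookkeeping corollary of the preceding results, and the only mild care needed is handling the degenerate components (isolated vertices) correctly.
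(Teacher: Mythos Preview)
Your proposal is correct and matches the paper's approach: the corollary is stated without proof in the paper precisely because it follows immediately from the fact that Theorem~\ref{thm:dim_1} and the preceding theorem characterize $\dim(G)=1$ and $\ftdim(G)=2$ by identical structural conditions. Your alternative self-contained argument is also fine and essentially recapitulates the reasoning already embedded in the proof of that preceding theorem.
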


It is easy to see that $\ftdim(G) \le n$ for all graphs $G$ of order $n$, since any set of $n-1$ vertices in $G$ is a resolving set. The next result shows that this upper bound is sharp.

\begin{thm}\label{thm:ftdim<n}
    For all graphs $G$ of order $n$, we have $\ftdim(G) < n$ if and only if there exists $v \in V(G)$ such that for all $u \in V(G)$ with $u \neq v$ we have $N(u)-\left\{v\right\} \neq N(v)-\left\{u\right\}$.
\end{thm}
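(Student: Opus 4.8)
The plan is to characterize when $\ftdim(G) < n$ by thinking about what goes wrong. The upper bound $\ftdim(G) \le n$ comes from the fact that any $n-1$ vertices form a resolving set, so $V(G)$ itself is fault-tolerant. To get $\ftdim(G) < n$ we need a \emph{proper} subset $S \subsetneq V(G)$ that is fault-tolerant; equivalently, $\ftdim(G) < n$ iff there exist two distinct vertices $v, w$ such that $V(G) - \{v,w\}$ is a resolving set, i.e. $V(G) - \{v,w\}$ already distinguishes $v$ from $w$ (it automatically distinguishes every other pair, since each such pair has a member inside the set). So the first step is to reduce the statement to: \emph{$\ftdim(G) < n$ iff there exist distinct $v,w \in V(G)$ that are distinguished by some vertex other than $v$ and $w$.}

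The second step is to analyze when $v$ and $w$ are \emph{not} distinguished by any third vertex. For $u \notin \{v,w\}$ to fail to distinguish $v$ from $w$ means $\dist(u,v) = \dist(u,w)$. I claim this holds for all such $u$ iff $N(v) - \{w\} = N(w) - \{v\}$ (here $N$ denotes the open neighborhood; I should check the paper uses $N$ for open neighborhood, which it does in Theorem~\ref{thm:ftdim<n} itself and in Theorem~\ref{thm:dim_1}). One direction: if $N(v)-\{w\} = N(w)-\{v\}$, then $v$ and $w$ are ``twins,'' and a standard argument shows $\dist(u,v) = \dist(u,w)$ for every $u \notin \{v,w\}$ — any shortest $u$--$v$ path can be rerouted through the common neighborhood to a $u$--$w$ path of the same length, and vice versa (handling separately the cases where $v,w$ are adjacent or not, and the case where $u$ is in a different component, where both distances are $\infty$). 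The converse: if $\dist(u,v) = \dist(u,w)$ for all $u \notin \{v,w\}$, consider any $x \in N(v) - \{w\}$; then $1 = \dist(x,v) = \dist(x,w)$, so $x \in N(w)$, giving $N(v) - \{w\} \subseteq N(w) - \{v\}$, and symmetrically equality.

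Putting these together: $\ftdim(G) = n$ iff for every pair of distinct vertices $v,w$ we have $N(v)-\{w\} = N(w)-\{v\}$; equivalently $\ftdim(G) < n$ iff there exists a pair $v,w$ with $N(v)-\{w\} \neq N(w)-\{v\}$. To match the precise phrasing in the statement — ``there exists $v$ such that for all $u \neq v$, $N(u) - \{v\} \neq N(v) - \{u\}$'' — I need one more observation: the ``twin'' relation $v \sim w :\Leftrightarrow N(v)-\{w\} = N(w)-\{v\}$, when it holds for a pair, actually forces a clique-or-independent-set structure. The cleanest route is to show that ``$N(v)-\{w\} = N(w)-\{u\}$ for all pairs'' is transitive enough that either \emph{every} pair is a twin pair, or there is a single vertex $v$ that is a twin of no other vertex. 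Concretely, I would argue the contrapositive of the desired equivalence: if for every $v$ there is some $u \neq v$ with $N(u)-\{v\} = N(v)-\{u\}$, then in fact all pairs are twins (this uses that twinhood of $v$ with $u$ and of $u$ with $w$ propagates to $v$ with $w$, which I should verify carefully, perhaps splitting into adjacent/non-adjacent twins), hence $\ftdim(G)=n$; conversely if some $v$ is a twin of no other vertex, then picking any $w$, the pair $\{v,w\}$ witnesses $\ftdim(G)<n$.

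\textbf{Main obstacle.} The delicate part is the last step — translating the symmetric ``exists a bad pair'' condition into the asymmetric ``exists $v$ good against all $u$'' condition — which hinges on showing the twin relation behaves well enough that ``no universal-twin vertex'' is equivalent to ``some bad pair exists.'' I expect to need a short case analysis on whether twins are adjacent (true twins) or non-adjacent (false twins), and to verify that a vertex cannot be simultaneously a true twin of one vertex and a false twin of another without collapsing everything into a complete graph or an empty graph; the earlier theorem characterizing $\dim(G) = n-1$ graphs as exactly complete or edgeless graphs may be a useful sanity check and possibly a reusable ingredient here.
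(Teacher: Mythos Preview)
Your first reduction is wrong, and the proposed repair via twin-transitivity will not save it. You claim that $\ftdim(G)<n$ is equivalent to the existence of a single pair $v,w$ with $V(G)\setminus\{v,w\}$ resolving. Take $G=K_2\sqcup K_2$ with vertices $a,b,c,d$ and edges $ab,cd$. Then $\{b,d\}=V(G)\setminus\{a,c\}$ is a resolving set (the distance vectors are $(1,\infty),(0,\infty),(\infty,1),(\infty,0)$), so your condition holds; yet every vertex has a twin ($a\leftrightarrow b$, $c\leftrightarrow d$), so by the very theorem you are proving $\ftdim(G)=4=n$. The same example kills the transitivity bridge: there is a non-twin pair ($a,c$) but no vertex is twin-free, so ``some non-twin pair'' and ``some vertex with no twin'' are genuinely inequivalent. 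More generally, twinhood is \emph{not} transitive across the true/false divide, and even restricting to one type does not force a twin-free vertex to exist once a single non-twin pair exists.

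The fix is to make the reduction asymmetric from the start. Since any superset of a fault-tolerant resolving set is again fault-tolerant, $\ftdim(G)<n$ holds iff $V(G)\setminus\{v\}$ is fault-tolerant for \emph{some} $v$. Unpacking fault-tolerance, this means that for \emph{every} $u\neq v$ the set $V(G)\setminus\{u,v\}$ is resolving; the only pair not trivially resolved by that set is $\{u,v\}$ itself, and your step~2 (which is correct) shows this pair is resolved iff $N(u)\setminus\{v\}\neq N(v)\setminus\{u\}$. This gives the statement directly, with no need for any twin-transitivity argument. This is exactly the paper's route.
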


\begin{proof}
Suppose that there exists a vertex $v \in V(G)$ such that for all $u \in V(G)$ with $u \neq v$ we have $N(u)-\left\{v\right\} \neq N(v)-\left\{u\right\}$. Let $S = V(G)-\left\{v\right\}$. Then for all $s \in S$, the set $S-\left\{s\right\}$ distinguishes $v$ from $s$, since $N(v)-\left\{s\right\} \neq N(s)-\left\{v\right\}$. Thus, $\ftdim(G) < n$.

For the other direction, suppose that $\ftdim(G) < n$. Then there exists a vertex $v \in V(G)$ such that $V(G)-\left\{v\right\}$ is a fault-tolerant resolving set. Then for every vertex $u \in V(G)-\left\{v\right\}$, the set $V(G)-\left\{u,v\right\}$ is a resolving set. In particular, $V(G)-\left\{u,v\right\}$ resolves $u$ and $v$. So, there must exist some vertex $w \in V(G)-\left\{u,v\right\}$ such that $w$ is adjacent to $u$ or $v$ but not both. Thus, $N(u)-\left\{v\right\} \neq N(v)-\left\{u\right\}$.
\end{proof}

\begin{cor}\label{cor:ftdim_n}
    For all graphs $G$ of order $n$, we have $\ftdim(G) = n$ if and only if for all $v \in V(G)$ there exists $u \in V(G)$ with $u \neq v$ such that $N(u)-\left\{v\right\} = N(v)-\left\{u\right\}$.
\end{cor}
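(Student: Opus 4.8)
The plan is to derive Corollary~\ref{cor:ftdim_n} directly from Theorem~\ref{thm:ftdim<n} by negating it, using the trivial upper bound on $\ftdim$. First I would record the observation already stated in the text just before Theorem~\ref{thm:ftdim<n}: for every graph $G$ of order $n$ we have $\ftdim(G) \le n$, since any set of $n-1$ vertices of $G$ is a resolving set, so $V(G)$ is (vacuously, up to the nonemptiness requirement) a fault-tolerant resolving set. Consequently the equality $\ftdim(G) = n$ is logically equivalent to the failure of the strict inequality $\ftdim(G) < n$.

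Next I would invoke Theorem~\ref{thm:ftdim<n}, which says that $\ftdim(G) < n$ if and only if there exists $v \in V(G)$ such that $N(u)-\{v\} \neq N(v)-\{u\}$ for all $u \in V(G)$ with $u \neq v$. Negating this characterization, $\ftdim(G) = n$ holds if and only if for every $v \in V(G)$ it is \emph{not} true that $N(u)-\{v\} \neq N(v)-\{u\}$ for all $u \neq v$; equivalently, if and only if for every $v \in V(G)$ there exists some $u \in V(G)$ with $u \neq v$ such that $N(u)-\{v\} = N(v)-\{u\}$. This is precisely the statement of the corollary, so no additional argument is required.

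Because the proof is just a logical restatement of the preceding theorem combined with the universal bound $\ftdim(G) \le n$, there is essentially no obstacle here; the only point that needs a little care is correctly dualizing the quantifier structure ``there exists $v$ such that for all $u$'' appearing in Theorem~\ref{thm:ftdim<n} into the form ``for all $v$ there exists $u$,'' which is routine.
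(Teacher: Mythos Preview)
Your proposal is correct and matches the paper's approach exactly: the corollary is stated without proof immediately after Theorem~\ref{thm:ftdim<n}, as it is simply the contrapositive of that theorem combined with the trivial bound $\ftdim(G)\le n$. Your careful remark about dualizing the quantifiers is the only thing to check, and you have done it correctly.
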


By Corollary~\ref{cor:ftdim_n}, it is clear that we have $\ftdim(K_n) = n$ for all positive integers $n$. If $G$ is a graph of order $n$ with no edges, then we also have $\ftdim(G) = n$ by the same corollary. There are many other graphs $G$ of order $n$ with $\ftdim(G) = n$, e.g., complete bipartite graphs of order $n$ with both parts of size at least $2$.

Now, we turn to extremal bounds on $\ftdim(G)$ with respect to $\dim(G)$. We start with an upper bound, which we will show is sharp up to the base of the exponent.

\begin{thm}
    For every graph $G$ of order greater than $1$, we have $\ftdim(G) \le \dim(G)(2+3^{\dim(G)-1})$.
\end{thm}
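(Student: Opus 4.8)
The plan is to build a fault-tolerant resolving set from an arbitrary metric basis by taking its closed neighborhood and then adjoining a few extra vertices. Let $k = \dim(G) \ge 1$ and fix a metric basis $W = \{w_1, \dots, w_k\}$ of $G$. Set $S_0 := N[W] = \bigcup_{i=1}^{k} N[w_i]$. Since $W$ resolves $G$, the neighbors of a fixed $w_i$ receive pairwise distinct distance vectors with respect to $W$; in such a vector the $w_i$-coordinate is always $1$, while for $j \ne i$ the $w_j$-coordinate lies within $1$ of $\dist(w_i, w_j)$ and hence takes at most $3$ values. Thus $\deg(w_i) \le 3^{k-1}$, so $|N[w_i]| \le 1 + 3^{k-1}$ and $|S_0| \le k(1 + 3^{k-1})$ — this is exactly the resolving-set degree bound recalled in the introduction.

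Next I would determine which pairs $S_0 - \{s\}$ can fail to resolve. If $s \notin W$ then $W \subseteq S_0 - \{s\}$, so assume $s = w_i$. A pair $\{u,v\}$ meeting $S_0 - \{w_i\}$ is resolved automatically, since a member of the set is at distance $0$ from itself and at distance at least $1$ from everything else; so I may assume $u, v \notin S_0 - \{w_i\}$, which (as $S_0 = N[W]$) means each of $u, v$ is either equal to $w_i$ or lies outside $N[W]$. Suppose first that $u, v \notin N[W]$. If some $w_j$ with $j \ne i$ distinguishes $u$ and $v$, we are done since $w_j \in S_0 - \{w_i\}$; otherwise $w_i$ distinguishes them, say $\dist(u,w_i) < \dist(v,w_i)$, and $\dist(u,w_i) \ge 2$ because $u \notin N[w_i]$. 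Taking $w'$ to be the penultimate vertex of a shortest $u$-$w_i$ path gives $w' \in N(w_i) \subseteq S_0 - \{w_i\}$ with $\dist(u,w') = \dist(u,w_i)-1 < \dist(v,w_i)-1 \le \dist(v,w')$, so the pair is resolved. Therefore the only pairs that $S_0 - \{w_i\}$ might leave unresolved have the form $\{w_i, v\}$ with $v \notin N[W]$.

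The heart of the argument is that for each $i$ there is at most one such bad vertex. Indeed, if $S_0 - \{w_i\}$ does not resolve $\{w_i,v\}$, then every vertex of $S_0 - \{w_i\}$ is equidistant from $w_i$ and $v$; in particular $v$ is adjacent to every neighbor of $w_i$, so $N(w_i) \subseteq N(v)$ and hence $\dist(v,w_i) = 2$ (or $\infty$, if $w_i$ is isolated), and also $\dist(w_j,v) = \dist(w_j,w_i)$ for every $j \ne i$. Since the value $\dist(v,w_i)$ thus depends only on $w_i$, the distance vector of $v$ with respect to $W$ is completely determined, so by injectivity of distance vectors under the resolving set $W$ there is at most one such $v$; call it $v_i$ when it exists. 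I would then set $S := S_0 \cup \{v_i : v_i \text{ exists}\}$, giving $|S| \le k(1 + 3^{k-1}) + k = k(2 + 3^{k-1})$. Finally one checks that $S$ is fault-tolerant: each $v_i$ lies outside $N[W] \supseteq W$, so removing any $s \in S - W$ leaves $W$ inside $S - \{s\}$; and removing $s = w_i$ leaves $(S_0 - \{w_i\}) \cup \{v_i\}$ inside $S - \{w_i\}$, which resolves the sole remaining pair $\{w_i, v_i\}$ because $v_i \in S - \{w_i\}$.

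The main obstacle is precisely this uniqueness of $v_i$: it is what bounds the number of adjoined vertices by $k$ and hence yields the stated estimate rather than something larger. The rest is bookkeeping, with some care needed for degenerate configurations (an isolated $w_i$, or vertices $u,v$ lying in connected components disjoint from $W$, or a coincidence $v_i = v_j$ for $i \ne j$); in each case the step ``the distance vector of $v$ is determined'' goes through unchanged, since the relevant distances are then forced to be $\infty$ on both sides.
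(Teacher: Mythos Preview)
Your proposal is correct and follows essentially the same strategy as the paper: take the closed neighborhood $N[W]$ of a metric basis, observe that removing a non-landmark leaves $W$ intact, show that removing a landmark $w_i$ leaves at most one unresolved pair of the form $\{w_i,v_i\}$, and adjoin the (at most $k$) vertices $v_i$. The degree bound $\deg(w_i)\le 3^{k-1}$ and the final count match the paper exactly.

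The one noteworthy difference is how uniqueness of $v_i$ is argued. The paper uses a transitivity trick: if $S_0-\{w_i\}$ failed to resolve both $\{w_i,u\}$ and $\{w_i,v\}$, then every vertex of $S_0-\{w_i\}$ would be equidistant from $u$ and $v$, so $S_0-\{w_i\}$ would also fail to resolve $\{u,v\}$, contradicting the already-established fact that it resolves all pairs not involving $w_i$. Your argument instead pins down the entire distance vector $d_W(v)$ of any bad $v$ (coordinate $i$ forced to $2$ or $\infty$, the others forced to $\dist(w_i,w_j)$) and invokes injectivity of $d_W$. Both are short; the paper's version avoids the small case split on whether $w_i$ is isolated, while yours makes the structure of the potential bad vertex explicit.
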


\begin{proof}
Given a resolving set $S$ for $G$, define \[S' = S \cup \bigcup_{s \in S} N(s).\] For each $s \in S$, let $T_s = S' - \left\{s\right\}$.
    We claim that $T_s$ resolves all pairs of vertices in $G$ except possibly $s$ and exactly one other vertex $u$. First, we show that if vertices $u,v\ne s$ then $T_s$ resolves $u,v$. Suppose that $T_s$ does not resolve them. Given $S$ resolves them, without loss of generality assume that $\dist(s,u)<\dist(s,v)$. Clearly $s$ is not isolated. Since for any vertex $x\ne s$, \[\dist(s, x) = 1+\min_{x' \in N(s)} \dist(x', x),\]
    assuming that $\argmin_{y\in N(s)}\dist(y,u)=u'$  and $\argmin_{y\in N(s)}\dist(y,v)=v'$ we have
     \[\dist(u', u) < \dist(v',v)\le\dist(u',v),\]
     so $T_s$ resolves $u,v$. Next, we show that there do not exist distinct vertices $u,v\ne s$ such that $T_s$ does not resolve $s,u$ and $s,v$. If there exist such vertices $u,v$, then $u,v\not \in T_s$ and for any vertex $x\in T_s$ we have $\dist(u,x)=\dist(s,x)=\dist(v,x)$, i.e., $T_s$ does not resolve $u,v$. This contradicts our previous claim.
    
    If $T_s$ does not resolve $s$ from some other vertex $u$, then we define $u_s = u$, and otherwise we define $u_s = s$. Let \[S'' = S' \cup \bigcup_{s \in S} \left\{u_s\right\}.\]
    
    Clearly $S''$ is a resolving set for $G$. Consider an arbitrary vertex $t \in S''$, and let $X = S'' - \left\{t\right\}$. If $t \not \in S$, then $X$ clearly resolves all pairs of vertices in $G$ since $S \subseteq X$. Otherwise, we may assume that $t \in S$. Recall that $T_t$ resolves all pairs of vertices in $G$, except possibly the pair of $t$ and $u_t$. However, $u_t \in X$ and $T_t \subseteq X$, so $X$ resolves all pairs of vertices in $G$. Thus, $S''$ is a fault-tolerant resolving set for $G$.

    For each $s\in S$, we have $|N(s)|\le 3^{|S|-1}$ \cite{gkl} and thus, $|S'| \le |S|(1+3^{|S|-1})$. Therefore, we have \[|S''| \le |S'|+|S| \le |S|(2+3^{|S|-1}),\] completing the proof.
\end{proof}

For the next result, we use the following construction of an infinite family of connected graphs $J_1, J_2, \dots$, which is very similar to a construction from \cite{gmd}. Given a copy of $K_{1, 3^k}$ with center vertex $c$, label each non-center vertex in the copy with a unique ternary string of length $k$ (digits $0$, $1$, $2$). Then, add $2k$ new vertices $s_1, \dots, s_k$ and $r_1, \dots, r_k$ such that $s_i$ and $r_i$ both have an edge to every vertex that is labeled with a ternary string having $0$ in its $i^{\text{th}}$ digit, $r_i$ has an edge to every vertex that is labeled with a ternary string having $1$ in its $i^{\text{th}}$ digit, and $r_i$ has an edge to $s_i$. Let $S = \left\{s_1, \dots, s_k \right\}$. Finally, from the copy of $K_{1,3^k}$ we remove the non-center vertex labeled with the all-$1$s ternary string and $k$ non-center vertices labeled with ternary strings containing exactly $1$ zero and $k-1$ ones. Hence the copy of $K_{1,3^k}$ becomes a copy of $K_{1,3^k-k-1}$.

Note that $\dist(v, s_i)$ is one more than the $i^{\text{th}}$ digit of $v$ for all $i$ with $1 \le i \le k$ and all non-center vertices $v$ in the copy of $K_{1, 3^k-k-1}$. This is because if the $i^{\text{th}}$ digit of $v$ is $0$, then by definition $v$ is adjacent to $s_i$. If the $i^{\text{th}}$ digit of $v$ is $1$, then by definition $v$ is adjacent to $r_i$, which is adjacent to $s_i$. If the $i^{\text{th}}$ digit of $v$ is $2$, then $vcus_i$ is a shortest $v,s_i$-path of length $3$ where the ternary string of $u$ contains $0$ in the $i^{\text{th}}$ digit and $2$ elsewhere. Also note that $s_i$ is the only vertex with distance $0$ to itself. Thus, the distance vectors $d_{v,S}$ are unique for all $v$ that are in $S$ or are non-center vertices in the copy of $K_{1, 3^k-k-1}$.

For all $i \neq j$, we have $d_{r_i,s_j}=2$ because $r_ius_j$ is a shortest $r_i,s_j$-path of length $2$ where $u$ has the all-$0$ ternary string. Thus $d_{r_i, S} \neq d_{r_j,S}$ for all $i\ne j$. Moreover, we have $d_{c,S} \neq d_{r_i,S}$ for all $i$ because $d_{c,s_i}>1=d_{r_i,s_i}$. 
Let $v$ be some non-center vertex in the copy of $K_{1,3^k-k-1}$. We have that $d_{c,S}\ne d_{v,S}$ because $d_{c,S}$ is the all-$2$ vector and the non-center vertex labeled with the all-$1$ ternary string has been removed. Finally, note that $d_{r_i,S}$ has $1$ in the $i^{\text{th}}$ digit and $2$ elsewhere. Thus $d_{r_i,S}\ne d_{v,S}$ because we have removed all non-center vertices labeled with ternary strings containing exactly $1$ zero and $k-1$ ones.


\begin{thm}\label{thm:ftdim_lower}
    For every positive integer $k$, we have $\dim(J_k) = k$ and $\ftdim(J_k) \ge 3^{k-1}-k$.
\end{thm}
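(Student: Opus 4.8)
The plan is to establish the two assertions separately. For $\dim(J_k)=k$, the inequality $\dim(J_k)\le k$ is immediate: the discussion preceding the statement shows that the distance vectors $d_{v,S}$ are pairwise distinct over all $v\in V(J_k)$, so $S=\{s_1,\dots,s_k\}$ is a resolving set of size $k$. For $\dim(J_k)\ge k$, I would invoke the degree bound of \cite{gkl}, namely that a graph of metric dimension $x$ has maximum degree at most $3^x-1$. The center $c$ is adjacent to precisely the $3^k-k-1$ non-center vertices that survive the deletion (and to nothing else), so $\deg(c)=3^k-k-1$, whence $3^{\dim(J_k)}-1\ge 3^k-k-1$. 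Since $2\cdot 3^{k-1}>k$ for every $k\ge 1$, we have $3^k-k>3^{k-1}$, which forces $\dim(J_k)>k-1$, i.e. $\dim(J_k)\ge k$.

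For the lower bound on $\ftdim(J_k)$, let $T$ be any (nonempty) fault-tolerant resolving set; then $T-\{s_1\}$ is a resolving set of $J_k$ (this holds whether or not $s_1\in T$). I would group the $3^k-k-1$ surviving non-center vertices, that is, the neighbors of $c$, into $3^{k-1}$ classes indexed by the last $k-1$ digits of the ternary label: the class with suffix $\sigma$ consists of the surviving vertices among those labeled $0\sigma$, $1\sigma$, $2\sigma$. The combinatorial step is to identify the ``broken'' classes (those missing a vertex). A vertex labeled $a\sigma$ is deleted only if $a\sigma$ is the all-$1$s string or has exactly one $0$ and $k-1$ ones; since $2\sigma$ is never deleted, the class of $\sigma$ is broken iff $\sigma=1^{k-1}$ or $\sigma$ has exactly one $0$ and $k-1$ ones, which is exactly $1+(k-1)=k$ suffixes. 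Hence $3^{k-1}-k$ classes are \emph{full}, each containing all three of $0\sigma,1\sigma,2\sigma$.

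The heart of the argument is the claim that in a full class the pair $v=0\sigma$, $w=1\sigma$ is resolved in $J_k$ only by $v$, $w$, and $s_1$. To prove it I would verify $\dist(x,v)=\dist(x,w)$ for every $x\notin\{v,w,s_1\}$: for $x=c$ both distances equal $1$; for any other surviving non-center star vertex both equal $2$; for $s_i$ with $i\ge 2$ both equal $1$ plus the common $i$-th digit of $v$ and $w$ (using the distance formula established above); for $r_1$ both equal $1$ since $v,w$ both have first digit in $\{0,1\}$; and for $r_i$ with $i\ge 2$ both equal $1$ if that common digit is $0$ or $1$, and both equal $3$ if it is $2$. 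On the other hand $\dist(s_1,v)=1\ne 2=\dist(s_1,w)$. Consequently, since $T-\{s_1\}$ is a resolving set not containing $s_1$, it must contain $v$ or $w$ from each of the $3^{k-1}-k$ full classes; as these chosen vertices carry distinct suffixes they are distinct, so $|T-\{s_1\}|\ge 3^{k-1}-k$ and therefore $\ftdim(J_k)=|T|\ge 3^{k-1}-k$.

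I expect the main obstacle to be the distance-$3$ computation for the vertices $r_i$ with $i\ge 2$ when the shared $i$-th digit of $v$ and $w$ equals $2$: one must show that no shorter $r_i$--$v$ path exists, i.e. that $r_i$ and $v$ have no common neighbor. This reduces to the observations that $c$ is adjacent to no $r_i$, that distinct vertices among $\{s_1,r_1,\dots,s_k,r_k\}$ are non-adjacent except for the matched pairs $r_is_i$, and that $s_i$ is adjacent to $v$ only when $v$ has a $0$ in position $i$, which it does not. A secondary point needing care is the bookkeeping that the deleted vertices---the all-$1$s label together with the $k$ labels with exactly one $0$---produce exactly $k$ broken classes, no more and no fewer.
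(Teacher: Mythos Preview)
Your proposal is correct and follows essentially the same route as the paper: use the resolving set $S$ and the degree bound of \cite{gkl} for $\dim(J_k)=k$, then for $\ftdim$ remove $s_1$, partition the surviving star vertices by their last $k-1$ digits, and show that in each full triple the pair $(0\sigma,1\sigma)$ is resolved only by $v$, $w$, and $s_1$, forcing one of them into $T-\{s_1\}$. Your bookkeeping is in fact slightly sharper than the paper's: you correctly identify exactly $k$ broken suffixes (the paper only argues ``at most $k+1$'' broken classes and ends with $|T-\{s_1\}|\ge 3^{k-1}-k-1$), so your count directly gives the stated bound $3^{k-1}-k$. One cosmetic slip: when you write ``$\sigma$ has exactly one $0$ and $k-1$ ones'' you mean $k-2$ ones (since $|\sigma|=k-1$); the count $1+(k-1)=k$ is nonetheless correct.
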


\begin{proof}
    The fact that $\dim(J_k) \le k$ follows by definition of $J_k$. Indeed, $S$ is a resolving set for $J_k$ of size $k$. To see that $\dim(J_k) \ge k$, note that the center vertex $c$ has degree $3^k-k-1$, any graph $G$ with $\dim(G) = x$ has maximum degree at most $3^x  - 1$ \cite{gkl}, and $3^k-k-1 > 3^{k-1}-1$ for all $k \ge 1$. Thus, we have $\dim(J_k) = k$.
    
    Given a fault-tolerant resolving set $T$ for $J_k$, note that $T-\left\{s_1\right\}$ must be a resolving set for $J_k$. In $J_k$, consider the set of vertices $N$ which are neighbors of the center vertex $c$. Note that $|S| = 3^k-k-1$ by definition of $J_k$. We partition the vertices of $N$ into $3^{k-1}$ subsets of at most $3$ vertices, where the labels of the vertices in each subset differ only in the first digit of their ternary strings. At most $k+1$ subsets have size less than $3$. Let $R$ be the set of these subsets of size exactly $3$, so $|R| \ge 3^{k-1}-k-1$.
    
    If vertices $v$ and $w$ are in the same subset where the first digit of the label of $v$ is $0$ and the first digit of the label of $w$ is $1$, then the only vertices in $J_k$ that resolve $v$ and $w$ are $v$, $w$, and $s_1$. Indeed, the center vertex $c$ has distance $1$ to both $v$ and $w$. Every non-center vertex from the copy of $K_{1,3^k-k-1}$ that is not equal to $v$ or $w$ has distance $2$ to both $v$ and $w$ through the center vertex. Every vertex $s_i$ with $i \neq 1$ has distance $1$ to both $v$ and $w$ if $v$ and $w$ have $i^{\text{th}}$ digit $0$, distance $2$ to both $v$ and $w$ through vertex $r_i$ if $v$ and $w$ have $i^{\text{th}}$ digit $1$, and distance $3$ to both $v$ and $w$ through a vertex $u$ and the center vertex if $v$ and $w$ have $i^{\text{th}}$ digit $2$, where $u\in N$ is labeled with the all-$0$ ternary string. Every vertex $r_i$ with $i \neq 1$ has distance $1$ to both $v$ and $w$ if $v$ and $w$ have $i^{\text{th}}$ digit $0$ or $1$, and otherwise $r_i$ has distance $3$ to both $v$ and $w$ through the vertex $u$ and the center vertex. Finally, $r_1$ has distance $1$ to both $v$ and $w$. Thus, the only vertices in $J_k$ that resolve $v$ and $w$ are $v$, $w$, and $s_1$.
    
    So, in order for $T-\left\{s_1\right\}$ to be a resolving set for $J_k$, we must have $v \in (T-\left\{u_1\right\})$ or $w \in (T-\left\{u_1\right\})$. Thus, every subset in $R$ must contribute at least one vertex to $T-\left\{s_1\right\}$. Therefore, $T-\left\{s_1\right\}$ must have size at least $3^{k-1}-k-1$.
\end{proof}

\begin{cor} We have
    \[\lim_{k \rightarrow \infty} \left( \max_{G: \text{ } \dim(G) = k} \frac{\log_3(\ftdim(G))}{k} \right) = 1.\]
\end{cor}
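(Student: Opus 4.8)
The plan is to deduce the corollary as a squeeze argument, bracketing the quantity $m(k) := \max_{G:\,\dim(G)=k}\ftdim(G)$ between the upper bound theorem proved above and Theorem~\ref{thm:ftdim_lower}. First I would check that $m(k)$ is well defined. Any graph $G$ with $\dim(G)=k\ge 1$ has order greater than $1$, so the upper bound theorem applies and gives $\ftdim(G)\le k(2+3^{k-1})$; moreover the graph $J_k$ from Theorem~\ref{thm:ftdim_lower} has $\dim(J_k)=k$, so the family over which the maximum is taken is nonempty. Being a nonempty set of positive integers bounded above, its supremum is attained, so the $\max$ is legitimate and $m(k)$ is finite.

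For the inequality $\limsup_{k\to\infty}\frac{\log_3 m(k)}{k}\le 1$, I would use $2+3^{k-1}\le 3^k$ (equivalent to $2\le 2\cdot 3^{k-1}$, valid for all $k\ge 1$) to conclude $m(k)\le k\cdot 3^{k}$. Taking base-$3$ logarithms gives $\log_3 m(k)\le k+\log_3 k$, hence $\frac{\log_3 m(k)}{k}\le 1+\frac{\log_3 k}{k}$, and the right-hand side tends to $1$ as $k\to\infty$.

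For the matching inequality $\liminf_{k\to\infty}\frac{\log_3 m(k)}{k}\ge 1$, I would invoke Theorem~\ref{thm:ftdim_lower} directly: $m(k)\ge \ftdim(J_k)\ge 3^{k-1}-k$, a quantity that is positive for all $k\ge 2$. Writing $3^{k-1}-k=3^{k-1}\bigl(1-k\cdot 3^{1-k}\bigr)$ and taking logarithms yields $\log_3 m(k)\ge (k-1)+\log_3\bigl(1-k\cdot 3^{1-k}\bigr)$; since $k\cdot 3^{1-k}\to 0$, the last term tends to $0$, so dividing by $k$ gives $\frac{\log_3 m(k)}{k}\ge \frac{k-1}{k}+o(1)\to 1$. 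Combining this with the previous paragraph yields $\lim_{k\to\infty}\frac{\log_3 m(k)}{k}=1$, which is the assertion.

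There is essentially no obstacle in the corollary itself: it is pure asymptotic bookkeeping, and the genuinely substantive content has already been established — the exponential upper bound (resting on the degree estimate $|N(s)|\le 3^{|S|-1}$ from \cite{gkl}) and the exponential lower bound furnished by the family $J_k$ in Theorem~\ref{thm:ftdim_lower}. The only points that require a line of care are confirming that $m(k)$ is finite so that the maximum and the limit make sense, and restricting to $k\ge 2$ so that the logarithm in the lower-bound step is applied to a positive number (harmless, since discarding finitely many values of $k$ does not affect the limit).
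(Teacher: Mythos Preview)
Your argument is correct and is exactly the squeeze the paper intends: the corollary is stated without proof immediately after the upper bound $\ftdim(G)\le \dim(G)(2+3^{\dim(G)-1})$ and Theorem~\ref{thm:ftdim_lower}, and your write-up simply fills in the routine asymptotic bookkeeping (well-definedness of $m(k)$, then $\log_3 m(k)/k$ sandwiched between $1-o(1)$ and $1+o(1)$). There is nothing to add.
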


If we restrict the maximum in the corollary so that it only ranges over connected graphs $G$ with $\dim(G) = k$, rather than over all graphs $G$ with $\dim(G) = k$, then we still obtain the same limit because the infinite family of graphs $J_1,J_2,\dots$ used in the proof of Theorem~\ref{thm:ftdim_lower} are connected.

\section{Fault tolerance for edge metric dimension}\label{sec:edim}

If a graph $G$ has at most one edge, then it is easy to see that $\edim(G) = 0$ and $\ftedim(G) = 1$. If $G$ has at least two edges, then we must have $\edim(G) \ge 1$ and $\ftedim(G) \ge 2$. In the next result, we characterize the graphs $G$ such that $\edim(G) = 1$. This characterization has already been proved for connected graphs \cite{kelenc}; we prove it here for all graphs. 

\begin{thm}\label{thm:edim_1}
For any graph $G$, $\edim(G)=1$ if and only if $G$ has at least $2$ edges, every component of $G$ is a singleton or a path, at most $2$ components of $G$ have an edge, and at most one component of $G$ has more than one edge.
\end{thm}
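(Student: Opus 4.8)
The plan is to prove both directions by analogy with the proof of Theorem~\ref{thm:dim_1}, but with the extra bookkeeping forced by the fact that we are resolving \emph{edges} rather than vertices. For the easy direction, suppose $G$ has at least two edges, every component is a singleton or a path, at most two components contain an edge, and at most one component has more than one edge. Since $G$ has at least two edges, $\edim(G)\ge 1$. I would exhibit a one-element edge resolving set $S=\{v\}$: if $G$ has exactly one component $C$ with more than one edge, take $v$ to be an endpoint of the path $C$; if instead $G$ has two components each with exactly one edge, take $v$ to be an endpoint of one of those two edges. In the first case every edge of $C$ gets a distinct distance to $v$ (consecutive integers along the path), and the single edge in the other component — if present — has distance $\infty$, hence is distinguished from all edges of $C$ and there is at most one such edge. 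In the second case the two edges get distances $0$ and $\infty$ respectively. Either way $d_S$ is injective on $E(G)$, so $\edim(G)=1$.

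For the converse, suppose $\edim(G)=1$, witnessed by $S=\{v\}$. First, $G$ has at least two edges, else $\edim(G)=0$. Next I would bound the structure component by component. The key local observation is: the vertex $v$ cannot be incident to two edges (those two edges both have distance $0$ to $v$), and more generally no vertex $u$ can have degree $3$ or more in the component of $v$, because among three edges at $u$ at least two would be equidistant from $v$ — one has to track carefully that at least two of the three edges $uu_1,uu_2,uu_3$ satisfy $\dist(u_i,v)=\dist(u,v)\pm\text{same}$, forcing a collision; similarly a cycle in the component of $v$ cannot be resolved by a single vertex. This shows the component $C_v$ containing $v$ is a path (or, if $v$ is isolated, a singleton, but then no edge is near $v$). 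For the remaining components: any component $D$ disjoint from $v$ contributes edges all at distance $\infty$ from $v$, so $D$ can contain at most one edge, and there can be at most one such component $D$ with an edge (two of them would give two $\infty$-edges that collide); combined with $C_v$ this yields ``at most two components have an edge'' and ``at most one component has more than one edge.'' Finally a component with an edge but no cycle and maximum degree at most $2$ is a singleton or a path, completing the description.

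The main obstacle I anticipate is the careful case analysis showing that a single landmark $v$ fails to edge-resolve any configuration other than a path component (plus at most one extra pendant edge). For vertices in $C_v$ of degree $\ge 3$ one must compare $\min$-type distances of incident edges, and the crucial subtlety — absent in the vertex version — is that two \emph{incident} edges $e,f$ sharing endpoint $u$ satisfy $\dist(e,v)=\dist(f,v)$ whenever the ``far'' endpoints of $e$ and $f$ are both at distance $\ge \dist(u,v)$ from $v$; so at a vertex of degree $3$ at least two of the three incident edges are at the same distance from $v$. One must also rule out the cycle case: if $C_v$ contained a cycle, pick an edge $e_0$ of the cycle at maximum distance from $v$ and show its two ``twin'' edges along the cycle are equidistant from $v$, or argue directly as in Theorem~\ref{thm:dim_1} that a single vertex cannot resolve a cycle. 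Once these local claims are in hand, the global statement assembles exactly as in the proof of Theorem~\ref{thm:dim_1}, with $\infty$-distances playing the role that ``different component'' played there.
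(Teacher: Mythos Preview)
Your proposal is correct and follows essentially the same approach as the paper's proof: for the backward direction you exhibit a single endpoint landmark (the paper simply takes an endpoint of a longest path component, which subsumes your case split), and for the forward direction you use the same degree constraints (landmark has degree $\le 1$, other vertices degree $\le 2$), the same cycle exclusion, and the same $\infty$-distance argument to control the non-landmark components. Your write-up is more detailed in justifying why degree $\ge 3$ forces a collision among incident edge distances, but the underlying argument and structure are identical.
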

\begin{proof}
First, we prove the backward direction. Suppose that $G$ has at least $2$ edges, every component of $G$ is a singleton or a path, at most $2$ components of $G$ have an edge, and at most one component of $G$ has more than one edge. Let $S$ consist of one endpoint vertex for a longest path component of $G$. Then $S$ is an edge-resolving set for $G$.
   
Next, we prove the forward direction. Suppose that $\edim(G)=1$, and let $S$ be an edge resolving set for $G$ with $|S| = 1$. Every landmark has degree at most one, otherwise it does not resolve multiple edges incident to it. All other vertices have degree at most $2$, otherwise no single landmark resolves more than two edges incident to any specific vertex. $G$ cannot have any component that is a cycle, since a cycle needs at least $2$ landmarks to resolve the edges.

If more than two components of $G$ have an edge, then these edges cannot be resolved by a single landmark. If two components of $G$ have more than one edge, then a single landmark in one component cannot resolve the edges in another component. Finally, note that $G$ has at least $2$ edges, since otherwise we would have $\edim(G) = 0$.
\end{proof}

Next, we determine all graphs $G$ that attain the minimum possible value of $\ftedim(G)$ among graphs with at least $2$ edges.

\begin{thm}
For any graph $G$, $\ftedim(G)=2$ if and only if $G$ has at least $2$ edges, every component of $G$ is a singleton or a path, at most $2$ components of $G$ have an edge, and at most one component of $G$ has more than one edge.
\end{thm}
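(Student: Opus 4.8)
The plan is to follow the same template as the characterization of graphs with $\ftdim(G)=2$: show that the four structural conditions on $G$ are together equivalent to $\edim(G)=1$, and then sandwich $\ftedim(G)$ between $\edim(G)+1$ and $2$. Note that Theorem~\ref{thm:edim_1} already tells us that $G$ satisfies the four listed conditions if and only if $\edim(G)=1$, so the theorem is equivalent to the statement $\ftedim(G)=2 \iff \edim(G)=1$, and this is what I would actually prove. (Indeed the corollary to follow will record exactly this equivalence.)

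For the forward direction, suppose $\ftedim(G)=2$. Then $G$ has at least two edges, since a graph with at most one edge has $\ftedim(G)=1$; consequently $\edim(G)\ge 1$. On the other hand, Lemma~\ref{lem:ftxdim} gives $\edim(G)\le\ftedim(G)-1=1$, so $\edim(G)=1$, and Theorem~\ref{thm:edim_1} then supplies all four structural conditions.

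For the backward direction, assume $G$ satisfies the four conditions, equivalently $\edim(G)=1$ by Theorem~\ref{thm:edim_1}. Since $G$ has at least two edges, $\edim(G)\ge 1$, and Lemma~\ref{lem:ftxdim} gives $\ftedim(G)\ge\edim(G)+1\ge 2$. It remains to exhibit a fault-tolerant edge resolving set of size exactly $2$, i.e. a pair $\{a,b\}$ of vertices such that each of $\{a\}$ and $\{b\}$ is, on its own, an edge resolving set for $G$. I would split into two cases. If some component $C$ of $G$ has more than one edge, then $C$ is a path and is the unique longest path component; I take $a$ and $b$ to be its two endpoints. The argument in the proof of Theorem~\ref{thm:edim_1} applies to either endpoint of the longest path component: the edges inside $C$ receive distinct finite coordinates as one walks along the path, and every edge lying in a different component receives coordinate $\infty$, so both $\{a\}$ and $\{b\}$ are edge resolving sets. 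If no component of $G$ has more than one edge, then, because $G$ has at least two edges and at most two components with an edge, $G$ has exactly two components that are single edges $e_1=a_1b_1$ and $e_2=a_2b_2$, with all other components singletons; I take $a=a_1$ and $b=a_2$. Then $\{a_1\}$ assigns coordinate $0$ to $e_1$ and coordinate $\infty$ to $e_2$, and these are the only two edges, so $\{a_1\}$ is an edge resolving set; likewise $\{a_2\}$. In either case $\{a,b\}$ is a fault-tolerant edge resolving set, so $\ftedim(G)\le 2$, and hence $\ftedim(G)=2$.

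The proof is essentially routine once Lemma~\ref{lem:ftxdim} and Theorem~\ref{thm:edim_1} are in hand; the only point requiring a little care is the backward direction, where one must verify that after deleting either vertex from the size-$2$ set the remaining single vertex still resolves all edges. This is precisely where the case split on whether a component has more than one edge is needed, and where the convention $\dist(e,w)=\infty$ for an edge and a vertex in different components does the work of separating the components.
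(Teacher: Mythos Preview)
Your proof is correct and follows essentially the same approach as the paper: the forward direction is identical (apply Lemma~\ref{lem:ftxdim} and Theorem~\ref{thm:edim_1}), and for the backward direction you exhibit a size-$2$ fault-tolerant edge resolving set. The only cosmetic difference is that the paper handles the backward direction uniformly by taking both endpoints of a longest path component (which also works when that component is a single edge), whereas you split into two cases and in the second case pick one vertex from each of the two edge components; both choices are valid.
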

\begin{proof}
First, we prove the backward direction. Suppose that $G$ has at least $2$ edges, every component of $G$ is a singleton or a path, at most $2$ components of $G$ have an edge, and at most one component of $G$ has more than one edge. Let $S$ consist of the two endpoint vertices for a longest path component of $G$. Then $S$ is a fault-tolerant edge-resolving set for $G$.
   
Next, we prove the forward direction. Suppose that $\ftedim(G)=2$. Then, $G$ has at least $2$ edges. Since $\edim(G) \le \ftedim(G)-1 = 1$ by Lemma~\ref{lem:ftxdim}, we can conclude by Theorem~\ref{thm:edim_1} that every component of $G$ is a singleton or a path, at most $2$ components of $G$ have an edge, and at most one component of $G$ has more than one edge.
\end{proof}

\begin{cor}
    For all graphs $G$, we have $\ftedim(G) = 2$ if and only if $\edim(G) = 1$.
\end{cor}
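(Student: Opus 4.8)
The plan is to deduce this corollary directly from the two preceding characterization theorems, since both conditions $\ftedim(G)=2$ and $\edim(G)=1$ have just been shown to be equivalent to the \emph{same} structural property of $G$ — namely, that $G$ has at least $2$ edges, every component of $G$ is a singleton or a path, at most $2$ components of $G$ have an edge, and at most one component of $G$ has more than one edge. Denote this property $(\star)$. Then Theorem~\ref{thm:edim_1} says $\edim(G)=1 \iff (\star)$, and the theorem immediately preceding this corollary says $\ftedim(G)=2 \iff (\star)$. Chaining the two biconditionals gives $\ftedim(G)=2 \iff \edim(G)=1$, which is exactly the claim.

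Alternatively, I could give a self-contained argument that does not quote the structural property explicitly. For the forward direction, assume $\ftedim(G)=2$; by Lemma~\ref{lem:ftxdim} we get $\edim(G)\le \ftedim(G)-1 = 1$, and since $\ftedim(G)=2$ forces $G$ to have at least two edges (a graph with at most one edge has $\ftedim(G)=1$), we also have $\edim(G)\ge 1$, hence $\edim(G)=1$. For the backward direction, assume $\edim(G)=1$; then by Theorem~\ref{thm:edim_1} the graph $G$ satisfies $(\star)$, so by the backward direction of the preceding theorem $\ftedim(G)\le 2$, and again $G$ has at least two edges so $\ftedim(G)\ge 2$, giving $\ftedim(G)=2$.

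There is essentially no obstacle here: the corollary is a bookkeeping consequence of results already established in this section, and the only point requiring a word of care is the lower bounds $\edim(G)\ge 1$ and $\ftedim(G)\ge 2$, which hold because any graph with at most one edge has $\edim(G)=0$ and $\ftedim(G)=1$, as observed at the start of the section. I would present the one-line version first (citing Theorem~\ref{thm:edim_1} and the preceding theorem) and omit the expanded argument, since it merely re-derives those two theorems' proofs.
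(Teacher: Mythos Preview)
Your proposal is correct and matches the paper's approach: the corollary is stated without proof in the paper, as it follows immediately by chaining Theorem~\ref{thm:edim_1} and the preceding theorem, both of which characterize their respective conditions by the same structural property~$(\star)$. Your one-line version is exactly what is intended, and your expanded alternative is also sound but unnecessary.
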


As with standard metric dimension, it is easy to see that $\ftedim(G) \le n$ for all graphs $G$ of order $n$, since any set of $n-1$ vertices in $G$ is an edge resolving set. The next theorem shows that this upper bound is sharp.

\begin{thm}
For all graphs $G$ of order $n$, we have $\ftedim(G)<n$ if and only if there exists $v\in V(G)$ such that for all $u\in V(G)-\{v\}$ that share a neighbor with $v$ we have $N(u)-\{v\}\ne N(v)-\{u\}$.
\end{thm}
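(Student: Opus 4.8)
The plan is to prove this by mimicking the proof of Theorem~\ref{thm:ftdim<n} for standard metric dimension, replacing the condition $N(u)-\{v\}\neq N(v)-\{u\}$ with the edge-metric analogue that requires a resolving \emph{edge} rather than vertex. The key observation is that for edges, two vertices $u,v$ that do not share a neighbor are automatically distinguished: any common neighbor $w$ of $u$ and $v$ has $\dist(w,\{u,v\})$-type data, but if $u,v$ have no common neighbor, then the edge incident to $u$ (if any) is already at a different distance profile. So I would first isolate the right local condition: a vertex $w$ (with $w\neq u,v$) "edge-resolves" $u$ from $v$ when the edges incident to $u$ and the edges incident to $v$ are separated in the distance-to-$w$ sense.

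For the backward direction, suppose $v\in V(G)$ is such a vertex, and set $S=V(G)-\{v\}$. For each $s\in S$, I need $S-\{s\}$ to be an edge resolving set. Since $S-\{s\}$ contains all vertices except $v$ and $s$, the only possible failure is a pair of edges $e,f$ that is resolved only by $v$ or only by $s$ among all of $V(G)$; more carefully, the distance vectors $d_{S-\{s\}}$ must be distinct for all edges. The critical case is an edge incident to $v$ versus an edge incident to $s$. If $u$ shares a neighbor with $v$, the hypothesis $N(u)-\{v\}\neq N(v)-\{u\}$ gives a vertex $w\neq u,v$ adjacent to exactly one of them, and $w\neq s$ can be arranged (or $w=s$ forces the conclusion differently); this $w$ separates the relevant edges. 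If $u$ does not share a neighbor with $v$, I would argue directly that edges at $u$ and edges at $v$ already differ in distance to some third vertex, or that $v$ simply cannot be the unique resolver. So $\ftedim(G)<n$.

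For the forward direction, suppose $\ftedim(G)<n$, so there is $v\in V(G)$ with $V(G)-\{v\}$ a fault-tolerant edge resolving set. Then for each $u\neq v$, the set $V(G)-\{u,v\}$ edge-resolves $G$. I would apply this to a pair of edges, one incident to $u$ and one incident to $v$ (when they exist and would otherwise collide), and conclude that some vertex $w\notin\{u,v\}$ is adjacent to exactly one of $u,v$, forcing $N(u)-\{v\}\neq N(v)-\{u\}$. The subtlety is handling vertices $u$ that are isolated or whose only neighbor is $v$: for these, no edge is "at $u$" independent of $v$, so no constraint arises — which is exactly why the theorem statement only quantifies over $u$ that share a neighbor with $v$.

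The main obstacle I anticipate is the careful bookkeeping of which edges can collide under $d_{S-\{s\}}$: unlike the vertex case where "$S-\{s\}$ fails only at $v$ and $s$" is clean, an edge resolving set removing two vertices can in principle fail on a pair of edges neither of which is incident to $v$ or $s$, if those two vertices happened to be the only resolvers. I would need to rule this out using that $V(G)-\{v\}$ (hence also its subsets down to $V(G)-\{u,v\}$) is assumed to edge-resolve, together with the fact that at least $n-2$ vertices remain, so any pair of distinct edges has a resolver among them unless both edges are "pinned" to the two removed vertices — and then the shared-neighbor analysis applies. Making this dichotomy precise, and correctly matching it to the phrase "share a neighbor with $v$," is where the real work lies.
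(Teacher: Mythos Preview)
Your overall approach matches the paper's, but the ``main obstacle'' you flag is not a real obstacle, and your proposed workaround for it is circular. The key observation you are missing is elementary: for any two distinct edges $e_1,e_2$, each non-shared endpoint resolves them (an endpoint $x$ of $e_1$ not in $e_2$ has $\dist(x,e_1)=0<\dist(x,e_2)$). Hence if $e_1,e_2$ share no endpoint, all four endpoints resolve them; if they share one endpoint $y$, the two other endpoints both resolve them. Consequently, if $V(G)-\{s,v\}$ fails to edge-resolve some pair $e_1,e_2$, the set of non-shared endpoints must lie inside $\{s,v\}$, which forces $e_1=\{y,s\}$ and $e_2=\{y,v\}$ for a common neighbor $y$ of $s$ and $v$. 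This is precisely the configuration where the hypothesis $N(s)-\{v\}\neq N(v)-\{s\}$ applies and yields a resolver $w\in V(G)-\{s,v\}$. So in the backward direction there is no possibility of a failing pair ``neither of which is incident to $v$ or $s$,'' and you should not (and cannot) invoke that $V(G)-\{v\}$ is already known to edge-resolve --- that is what you are proving.

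For the forward direction your plan is correct: given that $V(G)-\{u,v\}$ edge-resolves, apply it to the pair $\{u,y\},\{v,y\}$ for a common neighbor $y$. One point you should make explicit is why $N(u)-\{v\}=N(v)-\{u\}$ would force every $w\notin\{u,v\}$ to satisfy $\dist(w,\{u,y\})=\dist(w,\{v,y\})$; this follows because the neighborhood condition implies $\dist(w,u)=\dist(w,v)$ for all $w\neq u,v$ (replace the last step of a shortest $w$--$u$ path using that the penultimate vertex is also adjacent to $v$), but it is worth saying rather than leaving implicit.
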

\begin{proof}
    Suppose that there exists a vertex $v\in V(G)$ such that for all $u\in V(G)-\{v\}$ that share a neighbor with $v$ we have $N(u)-\{v\}\ne N(v)-\{u\}$. Let $S=V(G)-\{v\}$. Then for all $s\in S$, let $S'=S-\{s\}=V(G)-\{s,v\}$. Consider a pair of edges $e_1$ and $e_2$. The only possibility that $e_1$ and $e_2$ are not resolved by $S'$ is that they share an endpoint vertex and their other endpoint vertices are $v$ and $s$. However, since $v$ and $s$ have a common neighbor, by the assumption we have $N(s)-\{v\}\ne N(v)-\{s\}$. Thus $e_1$ and $e_2$ are resolved by $S'$.

    For the other direction, suppose that $\ftedim(G)<n$. Then there exists a vertex $v\in V(G)$ such that $V(G)-\{v\}$ is a fault-tolerant edge resolving set. Thus for every vertex $u\in V(G)-\{v\}$, the set $V(G)-\{u,v\}$ is an edge resolving set. If $u$ shares a neighbor $y$ with $v$, then we have $N(u)-\{v\}\ne N(v)-\{u\}$. Otherwise, $V(G)-\{u,v\}$ does not edge resolve edges $\{u,y\}$ and $\{v,y\}$.
\end{proof}

\begin{cor}
For all graphs $G$ of order $n$, we have $\ftedim(G)=n$ if and only if for all $v\in V(G)$ there exists $u\in V(G)-\{v\}$ that shares a neighbor with $v$ such that $N(u)-\{v\}=N(v)-\{u\}$.
\end{cor}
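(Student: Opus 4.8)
The plan is to derive this corollary directly from the preceding theorem by taking contrapositives, using only the elementary observation recorded just before that theorem, namely that $\ftedim(G) \le n$ for every graph $G$ of order $n$. First I would note that, since $\ftedim(G)$ can never exceed $n$, the equality $\ftedim(G) = n$ holds if and only if the strict inequality $\ftedim(G) < n$ fails. By the preceding theorem, $\ftedim(G) < n$ is equivalent to the existence of a vertex $v \in V(G)$ such that every $u \in V(G) - \{v\}$ sharing a neighbor with $v$ satisfies $N(u) - \{v\} \ne N(v) - \{u\}$. Hence $\ftedim(G) = n$ holds exactly when this existential statement is false.

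The remaining step is to negate that statement carefully. Writing $P(v)$ for the predicate ``for all $u \in V(G) - \{v\}$ sharing a neighbor with $v$, we have $N(u) - \{v\} \ne N(v) - \{u\}$'', the condition ``$\ftedim(G) < n$'' reads ``$\exists v\, P(v)$'', so its negation is ``$\forall v\, \lnot P(v)$''. Now $\lnot P(v)$ is precisely ``there exists $u \in V(G) - \{v\}$ sharing a neighbor with $v$ such that $N(u) - \{v\} = N(v) - \{u\}$''. Substituting this back gives exactly the stated condition: for all $v \in V(G)$ there exists $u \in V(G) - \{v\}$ that shares a neighbor with $v$ and has $N(u) - \{v\} = N(v) - \{u\}$. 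This completes the equivalence.

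Since the argument is a purely logical restatement of the theorem, there is essentially no mathematical obstacle; the only point requiring care is that when the inner universal quantifier over $u$ is negated, the qualifying clause ``shares a neighbor with $v$'' must stay attached to the now-existentially-quantified $u$ (it is a restriction on the domain of $u$, not part of the negated conclusion), so that it is neither dropped nor turned into an implication. No graph-theoretic input beyond the cited theorem and the bound $\ftedim(G)\le n$ is needed.
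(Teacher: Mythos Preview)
Your proposal is correct and matches the paper's approach: the paper states this result as an immediate corollary of the preceding theorem with no separate proof, and your argument spells out exactly the contrapositive reasoning that justifies this, using the trivial bound $\ftedim(G)\le n$.
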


For multiple results in this section, we will use the following construction of an infinite family of connected graphs $H_1, H_2, \dots$ from \cite{gmd}. Given a copy of $K_{1, 2^k}$ with center vertex $c$, label each non-center vertex in the copy with a unique binary string of length $k$. Then, add $k$ new vertices $u_1, \dots, u_k$, such that $u_i$ has an edge to the $2^{k-1}$ vertices that are labeled with binary strings having $0$ in the $i^{\text{th}}$ digit. Call the resulting graph $H_k$. It was shown \cite{gmd} that $\edim(H_k) = k$, and in particular that $S = \left\{u_1, \dots, u_k\right\}$ is an edge resolving set for $G$.

It was shown \cite{gmd} that any vertex in a graph $G$ with $\edim(G) \le k$ must have degree at most $2^k$. Here, we find a sharp bound for vertices in an edge resolving set.

\begin{thm}\label{thm:edimdeg}
    Any vertex in an edge resolving set of size $k$ must have degree at most $2^{k-1}$. Moreover, this bound is sharp.
\end{thm}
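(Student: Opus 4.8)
The plan is to prove the two halves separately: first the upper bound (any vertex $v$ in an edge resolving set $S$ of size $k$ has $\deg(v) \le 2^{k-1}$), and then sharpness via the family $H_k$.

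For the upper bound, I would argue as follows. Let $S$ be an edge resolving set with $|S| = k$, and fix $v \in S$. Consider the edges incident to $v$, say $e_w = \{v, w\}$ for $w \in N(v)$; there are $\deg(v)$ of them. For each such edge, look at its distance vector $d_S(e_w)$. The key observation is the coordinate of $S$ corresponding to $v$ itself: since $v$ is an endpoint of $e_w$, we have $\dist(e_w, v) = \min(\dist(v,v), \dist(v,w)) = 0$ for every $w \in N(v)$. So all $\deg(v)$ of these edges agree in the $v$-coordinate, and must therefore be distinguished by the remaining $k-1$ coordinates of $S$. Now I claim each of the other $k-1$ coordinates can take only two distinct values on these edges. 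For a landmark $z \in S$ with $z \ne v$, write $\dist(v,z) = t$. Then for $w \in N(v)$, $\dist(e_w, z) = \min(\dist(v,z), \dist(w,z)) = \min(t, \dist(w,z))$, and since $\dist(w,z) \ge t - 1$ (triangle inequality through $v$), this minimum is either $t-1$ or $t$ — only two possibilities. Hence the distance vectors $d_S(e_w)$, restricted to the $k-1$ coordinates other than $v$, lie in a set of size at most $2^{k-1}$; since $S$ resolves the edges, these vectors are distinct, so $\deg(v) \le 2^{k-1}$.

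For sharpness, I would exhibit $H_k$ with the edge resolving set $S = \{u_1, \dots, u_k\}$ of size $k$ (this is already established in the excerpt, citing \cite{gmd}), and check that the center vertex $c$ lies in $S$... but $c \notin S$, so instead I would either (a) note that $\deg(u_i) = 2^{k-1}$ in $H_k$ for each $i$ — indeed $u_i$ is joined to exactly the $2^{k-1}$ non-center vertices whose $i^{\text{th}}$ binary digit is $0$, so $\deg(u_i) = 2^{k-1}$ and $u_i \in S$ — which directly gives a vertex in an edge resolving set of size $k$ with degree exactly $2^{k-1}$; or (b) slightly modify $H_k$ by attaching a pendant to $u_i$ if one wants strict sharpness with a bit of slack. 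Option (a) is clean and requires no modification: $u_1 \in S$, $|S| = k$, and $\deg(u_1) = 2^{k-1}$, matching the bound.

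The main obstacle is the upper bound argument, specifically making the two-valued claim airtight: I must be careful that the triangle inequality $\dist(w, z) \ge \dist(v,z) - 1$ genuinely holds (it does, since $w \sim v$), and that the $v$-coordinate really is constantly $0$ across all incident edges (it is, by definition of $\dist(e, \cdot)$ as a minimum over endpoints). One edge case worth a sentence: if $v$ is isolated in $S$'s reasoning — but $\deg(v) \ge 1$ is needed only trivially, and if $\deg(v) = 0$ the bound $0 \le 2^{k-1}$ is immediate. A second subtlety: two distinct edges $e_w, e_{w'}$ incident to $v$ could a priori also be distinguished only by landmarks not in $S$, but since $S$ is an edge resolving set by hypothesis, the full vectors $d_S(e_w)$ are distinct, and we have just shown they differ only in the $k-1$ non-$v$ coordinates, each binary — so the counting goes through.
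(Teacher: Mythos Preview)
Your proposal is correct and follows essentially the same approach as the paper: the $v$-coordinate of every edge incident to $v$ is $0$, the remaining $k-1$ coordinates each take at most two values, and sharpness is witnessed by $u_i \in S$ in $H_k$ with $\deg(u_i)=2^{k-1}$. Your triangle-inequality justification for the two-value claim is in fact more explicit than the paper's, which simply asserts it.
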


\begin{proof}
    Let $G = (V,E)$ be a graph, and let $S$ be an edge resolving set for $G$ of size $k$. For each edge $e$ in $G$, let $d_S(e)$ denote the distance vector of $e$ with respect to $S$.
    
    Let $s \in S$. For each edge in $G$ of the form $\left\{s,v\right\}$ for some $v \in V$, the $s$ coordinate of the distance vector will be $0$, and there are at most $2$ possible values for every other coordinate. Thus, the number of edges in $G$ of the form $\left\{s,v\right\}$ is at most $2^{k-1}$.

    For the lower bound, we use the graph $H_k$ from \cite{gmd} which was defined at the beginning of the section. Recall that $\edim(H_k) = k$, $S = \left\{u_1, \dots, u_k\right\}$ is an edge resolving set for $H_k$, and every element of $S$ has degree $2^{k-1}$ \cite{gmd}. Thus, our bound is sharp.
\end{proof}

\begin{cor}
    The maximum possible minimum degree of a graph $G$ with $\edim(G)\le k$ is at most $2^{k-1}$.
\end{cor}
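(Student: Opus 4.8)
The plan is to deduce this directly from Theorem~\ref{thm:edimdeg}. The key observation is that the minimum degree of a graph is bounded above by the degree of \emph{any} single vertex we care to name, so it suffices to exhibit one vertex of $G$ whose degree is at most $2^{k-1}$; but Theorem~\ref{thm:edimdeg} already hands us such a vertex, namely any vertex lying in a minimum-size edge resolving set of $G$.

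Concretely, first I would let $G$ be an arbitrary graph with $\edim(G)\le k$ and set $j=\edim(G)$. If $j\ge 1$, fix an edge resolving set $S$ of size $j$ and pick any $s\in S$; Theorem~\ref{thm:edimdeg} gives $\deg_G(s)\le 2^{j-1}\le 2^{k-1}$, and since the minimum degree of $G$ is no larger than the degree of the particular vertex $s$, we conclude $\delta(G)\le \deg_G(s)\le 2^{k-1}$. If instead $j=0$, then by Theorem~\ref{thm:edim_1} (or directly from the definition) $G$ has at most one edge, so $\delta(G)\le 1\le 2^{k-1}$ as long as $k\ge 1$. Taking the maximum over all graphs $G$ with $\edim(G)\le k$ yields the claimed bound.

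I do not expect any real obstacle here: the statement is essentially a repackaging of Theorem~\ref{thm:edimdeg} together with the trivial inequality ``minimum degree $\le$ degree of a chosen vertex.'' The only point needing a word of care is the degenerate case $\edim(G)=0$ (equivalently, $G$ has at most one edge), where one invokes $k\ge 1$ rather than Theorem~\ref{thm:edimdeg}; alternatively one may simply restrict attention to $k\ge 1$ throughout, since for $k=0$ the bound $2^{k-1}$ is not even an integer. It is worth remarking that the graphs $H_k$ used to establish sharpness in Theorem~\ref{thm:edimdeg} have minimum degree $1$, so they do not witness sharpness of this corollary, and the corollary asserts only an upper bound; deciding whether $2^{k-1}$ is attained as a minimum degree would require a separate construction and is not claimed here.
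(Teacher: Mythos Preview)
Your proposal is correct and is exactly the intended argument: the paper states this corollary immediately after Theorem~\ref{thm:edimdeg} with no separate proof, relying on the same observation that a vertex in a minimum edge resolving set has degree at most $2^{k-1}$, which bounds $\delta(G)$. One tiny quibble: the fact that $\edim(G)=0$ forces $G$ to have at most one edge is stated in the text preceding Theorem~\ref{thm:edim_1}, not in that theorem itself (which characterizes $\edim(G)=1$), but your parenthetical ``or directly from the definition'' already covers this.
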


In the next two results, we show that \[\lim_{k \rightarrow \infty} \left( \max_{G: \text{ } \edim(G) = k} \frac{\log_2(\ftedim(G))}{k} \right) = 1.\] If we restrict the maximum so that it only ranges over connected graphs $G$ with $\edim(G) = k$, rather than over all graphs $G$ with $\edim(G) = k$, then we still obtain the same limit because the infinite family of graphs $H_1,H_2,\dots$ used in the proof of Theorem~\ref{thm:ftedim_lower} are connected. We start with the upper bound.

\begin{thm}\label{thm:ftedim_upper}
    For every graph $G$ of order greater than $1$, we have $\ftedim(G) \le \edim(G)(1+2^{\edim(G)})$.
\end{thm}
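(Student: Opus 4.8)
The plan is to mimic the strategy used in the proof of the upper bound $\ftdim(G) \le \dim(G)(2+3^{\dim(G)-1})$ from Section~\ref{sec:ftdim}, adapted to the edge-resolving setting. Start with an edge resolving set $S$ for $G$ of size $\edim(G)$, and set $S' = S \cup \bigcup_{s \in S} N(s)$, the closed neighborhood of $S$. For each $s \in S$, put $T_s = S' - \{s\}$. The first key step is to show that $T_s$ resolves every pair of edges of $G$ except possibly one specific pair: concretely, I expect $T_s$ to fail only on a pair of edges sharing an endpoint, where one edge is $\{s,w\}$ for some $w \in N(s)$ and the other is $\{w,w'\}$. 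The reasoning is the same shortest-path unfolding used before: for any edge $e$ not incident to $s$, the distance $\dist(s,e) = 1 + \min_{x \in N(s)} \dist(x,e)$, so if $S$ distinguishes two such edges then some neighbor of $s$ in $T_s$ already does. The only pairs possibly left unresolved by $T_s$ involve an edge incident to $s$.

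Next I would argue that for a fixed $s$, $T_s$ leaves at most one pair of edges unresolved, so that adding a single extra vertex $w_s$ repairs $T_s$. If $T_s$ failed on two distinct pairs each involving an $s$-incident edge, one should be able to derive that two of the involved edges have the same distance vector with respect to $T_s$ even though one of them is not incident to $s$, contradicting the previous step (this mirrors the ``there do not exist distinct $u,v$'' argument in the $\ftdim$ proof). Then define $w_s$ to be a vertex that resolves the bad pair (taking $w_s = s$ if there is no bad pair), and set $S'' = S' \cup \bigcup_{s\in S}\{w_s\}$. For any $t \in S''$: if $t \notin S$ then $S \subseteq S''-\{t\}$ so $S''-\{t\}$ still edge-resolves $G$; if $t \in S$ then $T_t \subseteq S''-\{t\}$ and $w_t \in S''-\{t\}$, so the only pair $T_t$ might miss is handled. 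Hence $S''$ is a fault-tolerant edge resolving set.

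For the size bound, apply Theorem~\ref{thm:edimdeg}: every $s \in S$ lies in an edge resolving set of size $\edim(G)$, so $|N(s)| \le 2^{\edim(G)-1}$, giving $|S'| \le \edim(G)\bigl(1 + 2^{\edim(G)-1}\bigr)$. Adding one vertex per element of $S$ yields $|S''| \le \edim(G)\bigl(2 + 2^{\edim(G)-1}\bigr) \le \edim(G)\bigl(1 + 2^{\edim(G)}\bigr)$ for $\edim(G) \ge 1$, which is the claimed bound. The main obstacle I anticipate is the edge analogue of the ``only one unresolved pair'' claim: unlike the vertex case, a pair of unresolved edges need not share all distances with $s$ in such a clean way, and I will need to be careful about which pairs of $s$-incident edges can simultaneously fail and confirm that the single vertex $w_s$ (a genuine vertex of $G$, distinct from $s$) actually exists and resolves the unique bad pair. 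Verifying exactly this dichotomy for edges — and that the bad pair, if any, is unique — is where the real work lies.
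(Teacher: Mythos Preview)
Your overall architecture matches the paper's exactly: form $S' = S \cup \bigcup_{s\in S} N(s)$, set $T_s = S'-\{s\}$, show $T_s$ edge-resolves every pair of edges not of the form $(\{s,u\},\{u,v\})$ with $u\in N(s)$, then repair with extra vertices. The gap is in your uniqueness claim. You assert that $T_s$ leaves \emph{at most one} pair of edges unresolved, so that a single repair vertex $w_s$ suffices for each $s$. But your justification, mirroring the $\ftdim$ argument, does not go through here. In the vertex case, two bad pairs $(s,u)$ and $(s,v)$ share the vertex $s$, so $d_{T_s}(u)=d_{T_s}(s)=d_{T_s}(v)$ and transitivity forces $T_s$ to fail on $(u,v)$. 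In the edge case the bad pairs are $(\{s,u_1\},\{u_1,v_1\})$ and $(\{s,u_2\},\{u_2,v_2\})$, and when $u_1\ne u_2$ these four edges are all distinct; the two equalities $d_{T_s}(\{s,u_i\})=d_{T_s}(\{u_i,v_i\})$ do not chain, and nothing prevents both from holding simultaneously. So for a fixed $s$ there may be one unresolved pair for \emph{each} neighbor $u\in N(s)$, up to $2^{|S|-1}$ of them.

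The paper handles exactly this: it proves uniqueness only \emph{per edge} $\{s,u\}$ (for fixed $s$ and fixed $u\in N(s)$ there is at most one $v$ with $(\{s,u\},\{u,v\})$ unresolved), defines a repair vertex $w_{s,u}$ for each such pair, and sets $S'' = S' \cup \bigcup_{s\in S,\,u\in N(s)}\{w_{s,u}\}$. The repair cost is then $\sum_{s\in S}|N(s)|\le |S|\cdot 2^{|S|-1}$ rather than $|S|$, which is precisely why the final bound is $|S|(1+2^{|S|-1})+|S|\cdot 2^{|S|-1}=|S|(1+2^{|S|})$ and not the smaller $|S|(2+2^{|S|-1})$ you obtained. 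Your instinct that this step is ``where the real work lies'' is right; the resolution is not that the bad pair is unique for $s$, but that it is unique for each $(s,u)$.
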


\begin{proof}
    Given an edge resolving set $S$ for $G$, define \[S' = S \cup \bigcup_{s \in S} N(s).\] For each $s \in S$, let $T_s = S' - \left\{s\right\}$.
    
    We claim that $T_s$ resolves all pairs of edges in $G$ except possibly $\{s,u\}$ and $\{u,v\}$ where $s\in S$. First, we show that if $s \not \in e$ and $s \not \in f$ then $T_s$ resolves $e,f$. Suppose that $T_s$ does not resolve them. Given that $S$ resolves them, without loss of generality assume that $\dist(s,e)<\dist(s,f)$. Clearly $s$ is not isolated. Since for any vertex $x\ne s$, \[\dist(s, x) = 1+\min_{x' \in N(s)} \dist(x', x),\]
    assuming that $\argmin_{y\in N(s)}\dist(y,e)=\eta$  and $\argmin_{y\in N(s)}\dist(y,f)=\phi$ we have
     \[\dist(\eta, e) < \dist(\phi,f)\le\dist(\eta,f),\]
     so $T_s$ resolves $e,f$. Next, we check that there do not exist distinct vertices $u,v, w \ne s$ such that $T_s$ does not resolve $\left\{s,u\right\}$ and $\left\{v,w\right\}$. Indeed, in this case we must have $u \in T_s$, so $T_s$ must resolve $\left\{s,u\right\}$ from any edge that does not contain $u$, including $\left\{v,w\right\}$. Finally, we check that there do not exist distinct vertices $u,v \ne s$ such that $T_s$ does not resolve $\left\{s,u\right\}$ and $\left\{s,v\right\}$. Indeed, in this case we must have $u, v \in T_s$, so $T_s$ must resolve $\left\{s,u\right\}$ and $\left\{s,v\right\}$. 

     We check that there do not exist distinct vertices $u,v_1,v_2\ne s$ such that $T_s$ does not resolve $\{s,u\},\{u,v_1\}$ and $\{s,u\},\{u,v_2\}$. This is because that would imply that $T_s$ does not resolve $\{u,v_1\},\{u,v_2\}$, contradicting with our previous claim.

     If $T_s$ does not resolve $\{s,u\},\{u,v\}$, then we define $w_{s,u}=v$, and otherwise we define $w_{s,u}=s$. Let
     $$
     S''=S'\cup\bigcup_{s\in S,u\in N(s)}\{w_{s,u}\}.
     $$
     Thus, $S''$ is a fault-tolerant edge resolving set for $G$.

    For each $s\in S$, we have $|N(s)|\le 2^{|S|-1}$ by Theorem~\ref{thm:edimdeg} and thus, $|S''| \le |S|(1+2^{|S|-1})+|S|2^{|S|-1} = |S|(1+2^{|S|})$. 
\end{proof}

The next result shows that the bound in Theorem~\ref{thm:ftedim_upper} is sharp up to the base of the exponent.

\begin{thm}\label{thm:ftedim_lower}
    For every positive integer $k$, we have $\edim(H_k) = k$ and $\ftedim(H_k) \ge 2^{k-1}+1$.
\end{thm}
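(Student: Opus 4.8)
The plan is to mirror the proof of Theorem~\ref{thm:ftdim_lower}: first re-establish that $\edim(H_k) = k$ using the degree bound, and then argue that any fault-tolerant edge resolving set is large because deleting one carefully chosen landmark leaves many ``hard'' pairs of edges that can only be resolved by a bounded set of vertices. First I would recall from \cite{gmd} that $S = \{u_1,\dots,u_k\}$ is an edge resolving set for $H_k$, so $\edim(H_k)\le k$; for the reverse inequality, note that the center vertex $c$ has degree $2^k$, while Theorem~\ref{thm:edimdeg} (or the cited degree bound $2^k$ for graphs with $\edim \le k$, combined with the fact that $2^{k-1}+\text{something}$ style counting rules out $k-1$) forces $\edim(H_k)\ge k$. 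I would double-check the exact numerology here: since $H_k$ has a vertex of degree $2^k$ and a graph with edge metric dimension $j$ has max degree at most $2^j$, we get $\edim(H_k) \ge k$ directly, so $\edim(H_k)=k$.

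For the lower bound on $\ftedim(H_k)$, let $T$ be a fault-tolerant edge resolving set. Then $T - \{u_1\}$ must be an edge resolving set for $H_k$. The key structural claim is: for each pair of non-center vertices $x,y$ whose binary labels differ only in the first coordinate (say $x$ has $0$ and $y$ has $1$ in position $1$), the edges $\{c,x\}$ and $\{c,y\}$ are resolved only by the vertices $x$, $y$, and $u_1$. I would verify this by a distance computation analogous to the one in Theorem~\ref{thm:ftdim_lower}: every other $u_i$ ($i\neq 1$) is at the same distance from both $\{c,x\}$ and $\{c,y\}$ (since $x,y$ agree in coordinate $i$), the center $c$ is at distance $0$ from both, and any other non-center vertex $z \neq x,y$ is at distance $1$ from both edges (through $c$). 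Only $u_1$ distinguishes them (distance $0$ from $\{c,x\}$ versus $1$ from $\{c,y\}$, or similar), along with the trivial resolvers $x$ and $y$ themselves. Since $u_1 \notin T - \{u_1\}$, each such pair forces $x\in T$ or $y\in T$.

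Finally, I would count: the $2^k$ non-center vertices pair up into $2^{k-1}$ pairs differing only in coordinate $1$, and each pair contributes at least one vertex to $T - \{u_1\}$. Hence $|T| \ge |T - \{u_1\}| + 1 \ge 2^{k-1} + 1$. The main obstacle I anticipate is the distance bookkeeping in the structural claim — specifically making sure that for edges $\{c,x\}$ where $x$ carries a $1$ in coordinate $i\ne 1$, the vertex $u_i$ really is equidistant to both $\{c,x\}$ and $\{c,y\}$ and does not accidentally resolve the pair; this requires carefully using $\dist(u_i, \{c,x\}) = \min(\dist(u_i,c),\dist(u_i,x))$ and checking all label-coordinate cases, exactly as in the earlier theorem. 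A secondary subtlety is confirming that no pair of edges of the form $\{c,x\},\{c,y\}$ with labels differing in more than one coordinate is also ``hard'' (they are easily resolved, so they impose no constraint), and that choosing to delete $u_1$ rather than some other landmark is legitimate — which it is, since $T$ fault-tolerant means $T-\{s\}$ resolves edges for \emph{every} $s\in T$, and if $u_1 \notin T$ the bound is even easier since then $S \not\subseteq T$ forces a separate (and smaller) argument, so WLOG $u_1 \in T$.
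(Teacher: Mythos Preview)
Your proposal is correct and follows essentially the same approach as the paper: partition the $2^k$ leaves into $2^{k-1}$ pairs differing only in the first coordinate, show that the edge pair $\{c,x\},\{c,y\}$ is resolved only by $x$, $y$, and $u_1$, and conclude that $T-\{u_1\}$ must pick up one vertex from each pair. Two small notes: the actual distances are $\dist(u_1,\{c,x\})=1$ versus $\dist(u_1,\{c,y\})=2$ (not $0$ versus $1$), and your attention to the case $u_1\notin T$ is warranted---there one gets the stronger bound $|T|\ge 2^k$ since fault-tolerance forces \emph{both} $x$ and $y$ into $T$ for every pair---whereas the paper's write-up simply does not address this case explicitly.
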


\begin{proof}
    Given a fault-tolerant edge resolving set $T$ for $H_k$, note that $T-\left\{u_1\right\}$ must be an edge resolving set for $H_k$. In $H_k$, consider the $2^k$ vertices which are neighbors of the center vertex $c$ in the copy of $K_{1,2^k}$. We partition these $2^k$ vertices into $2^{k-1}$ pairs of vertices, where the labels of the two vertices in each pair differ only in the first digit of their binary strings. 
    
    If vertices $v$ and $w$ are in the same pair, then the only vertices in $H_k$ that resolve $\left\{v,c\right\}$ and $\left\{w,c\right\}$ are $v$, $w$, and $u_1$. Indeed, the center vertex $c$ has distance $0$ to both $\left\{v,c\right\}$ and $\left\{w,c\right\}$. Every non-center vertex in the copy of $K_{1,2^k}$ that is not equal to $v$ or $w$ has distance $1$ to both $\left\{v,c\right\}$ and $\left\{w,c\right\}$. Finally, every vertex $u_i$ with $i \neq 1$ has distance $1$ to both $\left\{v,c\right\}$ and $\left\{w,c\right\}$ if $v$ and $w$ have $i^{\text{th}}$ digit $0$, and otherwise $u_i$ has distance $2$ to both $\left\{v,c\right\}$ and $\left\{w,c\right\}$. Thus, the only vertices in $H_k$ that resolve $\left\{v,c\right\}$ and $\left\{w,c\right\}$ are $v$, $w$, and $u_1$.
    
    So, in order for $T-\left\{u_1\right\}$ to be an edge resolving set for $H_k$, we must have $v \in (T-\left\{u_1\right\})$ or $w \in (T-\left\{u_1\right\})$. Thus, every pair in the partition must contribute at least one vertex to $T-\left\{u_1\right\}$. Therefore, $T-\left\{u_1\right\}$ must have size at least $2^{k-1}$.
\end{proof}

\section{Fault tolerance for adjacency dimension}\label{sec:adim}

If $G$ has order $1$, then $\adim(G) = 0$ and $\ftadim(G) = 1$. For both graphs $G$ of order $2$, it is easy to see that $\adim(G) = 1$ and $\ftadim(G) = 2$. In general, for any graph $G$ of order at least $2$, we must have $\adim(G) \ge 1$, so $\ftadim(G) \ge 2$. In the next result, we characterize the graphs $G$ of order at least $3$ such that $\adim(G) = 1$. This characterization has already been proved for connected graphs \cite{jannesari}; we prove it here for all graphs.

\begin{thm}\label{thm:adim_1}
    Let $G$ be a graph of order at least $3$. Then $\adim(G)=1$ if and only if $G$ has $3$ vertices and at least one edge, and $G$ is not a complete graph.
\end{thm}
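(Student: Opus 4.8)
The plan is to exploit the fact that with a single landmark $v$, the adjacency vector $a_{\{v\}}(u) = \min(2,\dist(u,v))$ can take only the three values $0$, $1$, $2$: it is $0$ exactly when $u = v$, it is $1$ exactly when $u$ is a neighbor of $v$, and it is $2$ for every other vertex $u$. Consequently an adjacency resolving set of size $1$ forces $|V(G)| \le 3$, and since we assume $|V(G)| \ge 3$, this pins down $|V(G)| = 3$.

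For the forward direction, suppose $\adim(G) = 1$ and let $\{v\}$ be an adjacency basis. Since $|V(G)| = 3$ and the three vertices must receive three distinct values from $\{0,1,2\}$, exactly one vertex other than $v$ is adjacent to $v$ and exactly one is not. Hence $G$ contains at least the one edge incident to $v$, so $G$ has at least one edge; and $G$ is not complete, since the third vertex is a non-neighbor of $v$. This gives the three asserted properties.

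For the backward direction, suppose $|V(G)| = 3$, $G$ has at least one edge, and $G \neq K_3$. Then $G$ has exactly one or exactly two edges. In either case (a single edge plus an isolated vertex, or the path $P_3$) there is a vertex $v$ of degree exactly $1$; take $S = \{v\}$. The three vertices then receive the values $0$ (at $v$), $1$ (at the unique neighbor of $v$), and $2$ (at the remaining vertex), which are pairwise distinct, so $S$ is an adjacency basis. Since $G$ has order greater than $1$ we cannot have $\adim(G) = 0$, so $\adim(G) = 1$.

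The argument is essentially a three-value counting bound followed by a short case check, so I do not anticipate a genuine obstacle; the only point requiring a little care is matching the stated combinatorial description (``$3$ vertices, at least one edge, not complete'') with the structural condition that some vertex has degree exactly $1$, which is immediate from enumerating the four graphs on three vertices (the empty graph, a single edge with an isolated vertex, the path $P_3$, and $K_3$).
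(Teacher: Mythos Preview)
Your proof is correct and follows essentially the same approach as the paper: both use the three-value pigeonhole observation that $a_{\{v\}}$ takes values in $\{0,1,2\}$ to force $|V(G)|=3$, and then dispose of the two admissible graphs on three vertices by direct inspection. Your forward direction is phrased constructively (reading off the degree-$1$ condition at the landmark), while the paper argues by eliminating the edgeless and complete cases, but these are equivalent.
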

\begin{proof}
The backward direction is easy to check, since there are only $2$ such graphs up to isomorphism, so we prove the forward direction. Suppose that $\adim(G)=1$, and let $S$ be an adjacency resolving set of size $1$. Since the distance of any vertex to the landmark in $S$ is capped at $2$, $G$ has at most $3$ vertices. Therefore $G$ has exactly $3$ vertices. If $G$ has no edge or if $G$ is isomorphic to $K_3$, then $\adim(G)\ge2$, a contradiction. Thus, $G$ is isomorphic to $P_3$ or the union of $P_2$ and a singleton.
\end{proof}

In the following result, we characterize the graphs $G$ that attain the minimum possible value of $\ftadim(G)$ among graphs $G$ of order at least $3$.

\begin{thm}
    Let $G$ be a graph of order at least $3$. Then $\ftadim(G)=2$ if and only if $G$ has $3$ vertices and at least one edge, and $G$ is not a complete graph.
\end{thm}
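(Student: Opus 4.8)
The plan is to mirror the structure of the earlier two-element characterizations in the paper (the one for $\ftdim(G)=2$ and the one for $\ftedim(G)=2$): reduce the forward direction to the known characterization of $\adim(G)=1$ via Lemma~\ref{lem:ftxdim}, and handle the backward direction by exhibiting an explicit fault-tolerant adjacency resolving set of size $2$ in the (only two, up to isomorphism) relevant graphs. In effect the statement is the $\adim$-analogue of the corollaries $\ftdim(G)=2\iff\dim(G)=1$ and $\ftedim(G)=2\iff\edim(G)=1$.

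For the backward direction, I would suppose $G$ has $3$ vertices, at least one edge, and is not complete. By Theorem~\ref{thm:adim_1} this gives $\adim(G)=1$, and since $G$ has order at least $2$ we already know $\adim(G)\ge 1$, hence $\ftadim(G)\ge\adim(G)+1=2$ by Lemma~\ref{lem:ftxdim}. For the matching upper bound, I would observe that up to isomorphism $G$ is either $P_3$ or the disjoint union of $P_2$ with a singleton. In the first case let $a,c$ be the two endpoints of the path; in the second case let $a,c$ be the two vertices of the $P_2$. In either case a direct check shows that each of $\{a\}$ and $\{c\}$ is an adjacency basis for $G$ (the capped-distance adjacency vector takes a distinct value on each of the three vertices, using that a vertex in a different component is at capped distance $2$). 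Consequently $S=\{a,c\}$ has the property that $S-\{s\}$ is an adjacency resolving set for every $s\in S$, so $S$ is a fault-tolerant adjacency resolving set and $\ftadim(G)\le 2$, giving equality.

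For the forward direction, I would suppose $\ftadim(G)=2$. By Lemma~\ref{lem:ftxdim} we get $\adim(G)\le\ftadim(G)-1=1$, and since $G$ has order at least $3\ge 2$ we have $\adim(G)\ge 1$, so $\adim(G)=1$. Applying Theorem~\ref{thm:adim_1} then yields that $G$ has exactly $3$ vertices, has at least one edge, and is not a complete graph, as desired.

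The proof presents no genuine obstacle; the only point requiring a little care is the explicit verification in the backward direction that the chosen two-element set works, combined with invoking $\ftadim(G)\ge 2$ so that one obtains equality rather than merely $\ftadim(G)\le 2$. One could also state the result as the corollary that for graphs of order at least $3$, $\ftadim(G)=2$ if and only if $\adim(G)=1$, exactly paralleling the corollaries following the analogous results for $\ftdim$ and $\ftedim$.
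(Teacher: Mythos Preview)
Your proof is correct and follows essentially the same approach as the paper: the forward direction is identical (reduce via Lemma~\ref{lem:ftxdim} to $\adim(G)=1$ and invoke Theorem~\ref{thm:adim_1}), and your backward direction simply spells out what the paper dismisses as ``easy to check.'' If anything, your version is slightly more thorough in explicitly verifying the two graphs and justifying the lower bound $\ftadim(G)\ge 2$.
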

\begin{proof}
The backward direction is easy to check, so we prove the forward direction. Suppose that $\ftadim(G)=2$. Then by Lemma~\ref{lem:ftxdim}, we have $\adim(G) \le \ftadim(G)-1 = 1$. Since $G$ has order at least $3$, by Theorem~\ref{thm:adim_1} we can conclude that $G$ has $3$ vertices and at least one edge, and $G$ is not a complete graph.
\end{proof}

\begin{cor}
    For all graphs $G$, we have $\ftadim(G) = 2$ if and only if $\adim(G) = 1$.
\end{cor}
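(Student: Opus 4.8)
The plan is to reduce the corollary to the two structural characterizations that are already available, splitting on the order of $G$. First I would dispose of the small cases. If $G$ has order $1$, then $\adim(G) = 0$ and $\ftadim(G) = 1$, so neither $\adim(G) = 1$ nor $\ftadim(G) = 2$ holds and the biconditional is vacuously true. If $G$ has order $2$, then there are only two such graphs up to isomorphism, and for both we have $\adim(G) = 1$ and $\ftadim(G) = 2$, so both sides of the biconditional hold.

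For the main case, suppose $G$ has order at least $3$. Here Theorem~\ref{thm:adim_1} says that $\adim(G) = 1$ if and only if $G$ has exactly $3$ vertices, has at least one edge, and is not complete, while the immediately preceding theorem says that $\ftadim(G) = 2$ if and only if $G$ satisfies exactly the same structural condition. Chaining these two equivalences yields $\ftadim(G) = 2 \iff \adim(G) = 1$ for every graph of order at least $3$, and together with the two small cases this establishes the corollary for all graphs $G$.

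I do not expect any real obstacle here: the corollary is essentially a bookkeeping consequence of the preceding results. The one point that must not be overlooked is that the two characterization theorems are stated only for graphs of order at least $3$, so the cases of order $1$ and order $2$ genuinely require the separate (but trivial) verification above.
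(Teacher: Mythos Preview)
Your proposal is correct and mirrors exactly the implicit argument in the paper: the corollary is stated without proof because it follows immediately from combining the two characterization theorems (for $\adim(G)=1$ and $\ftadim(G)=2$) together with the small-order observations the paper already records at the start of the section. There is nothing to add or change.
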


As with standard metric dimension, it is easy to see that $\ftadim(G) \le n$ for all graphs $G$ of order $n$, since any set of $n-1$ vertices in $G$ is an adjacency resolving set. The next result shows that this upper bound is sharp; the proof is the same as Theorem~\ref{thm:ftdim<n}.

\begin{thm}\label{thm:ftadim<n}
    For all graphs $G$ of order $n$, we have $\ftadim(G) < n$ if and only if there exists $v \in V(G)$ such that for all $u \in V(G)$ with $u \neq v$ we have $N(u)-\left\{v\right\} \neq N(v)-\left\{u\right\}$.
\end{thm}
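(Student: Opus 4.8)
The plan is to follow the proof of Theorem~\ref{thm:ftdim<n} almost verbatim, since the distance cap at $2$ makes the adjacency version of ``one vertex distinguishes a given pair'' reduce to exactly the condition on neighborhoods that appears in the statement. The only thing to verify is this translation; everything else is a word-for-word copy of the earlier argument.

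First I would prove the backward direction. Suppose there is a vertex $v \in V(G)$ with $N(u)-\{v\} \neq N(v)-\{u\}$ for every $u \in V(G)$ with $u \neq v$. Take $S = V(G)-\{v\}$, which has size $n-1$, and fix $s \in S$. The set $S-\{s\} = V(G)-\{s,v\}$ automatically $1$-resolves every pair of distinct vertices except possibly the pair $\{s,v\}$, because any vertex $x \notin \{s,v\}$ lies in $S-\{s\}$ and satisfies $\min(2,\dist(x,x)) = 0$ while $\min(2,\dist(x,y)) \geq 1$ for every $y \neq x$. To handle $\{s,v\}$, note that $N(s)-\{v\} \neq N(v)-\{s\}$ supplies a vertex $w \notin \{s,v\}$ adjacent to exactly one of $s,v$; for that vertex $\min(2,\dist(w,\cdot))$ equals $1$ on one of $s,v$ and $2$ on the other, so $w \in S-\{s\}$ $1$-resolves $s$ and $v$. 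Hence $S$ is a fault-tolerant adjacency resolving set and $\ftadim(G) \leq n-1 < n$.

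Next I would prove the forward direction. If $\ftadim(G) < n$, then some set $V(G)-\{v\}$ is a fault-tolerant adjacency resolving set, so for each $u \in V(G)-\{v\}$ the set $V(G)-\{u,v\}$ $1$-resolves $G$, and in particular $1$-resolves the pair $u,v$. Thus there is $w \in V(G)-\{u,v\}$ with $\min(2,\dist(w,u)) \neq \min(2,\dist(w,v))$. Since $w$ is distinct from both $u$ and $v$, both capped distances lie in $\{1,2\}$, so one equals $1$ and the other equals $2$; that is, $w$ is adjacent to exactly one of $u$ and $v$, which gives $N(u)-\{v\} \neq N(v)-\{u\}$, the desired condition.

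The only point requiring any care is the equivalence between ``$w$ $1$-resolves the pair $u,v$'' and ``$w$ is adjacent to exactly one of $u,v$'' for a third vertex $w$; as noted, the cap at $2$ forces $\min(2,\dist(w,u))$ and $\min(2,\dist(w,v))$ into $\{1,2\}$, and they differ precisely when $w$ sees exactly one of the two as a neighbor. With that observation the argument is identical to Theorem~\ref{thm:ftdim<n}, so I do not expect a genuine obstacle; this is why the paper simply remarks that ``the proof is the same.''
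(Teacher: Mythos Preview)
Your proof is correct and follows exactly the approach of Theorem~\ref{thm:ftdim<n}, which is precisely what the paper intends when it says ``the proof is the same.'' Your added explanation of why a third vertex $w$ $1$-resolves $\{u,v\}$ if and only if $w$ is adjacent to exactly one of them is the right translation and is the only point where the adjacency version differs (trivially) from the standard one.
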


\begin{cor}
    For all graphs $G$ of order $n$, we have $\ftadim(G) = n$ if and only if for all $v \in V(G)$ there exists $u \in V(G)$ with $u \neq v$ such that $N(u)-\left\{v\right\} = N(v)-\left\{u\right\}$.
\end{cor}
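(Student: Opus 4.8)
The plan is to obtain this corollary directly as the logical complement of Theorem~\ref{thm:ftadim<n}. The one preliminary fact I would invoke is that $\ftadim(G) \le n$ for every graph $G$ of order $n$: taking $S = V(G)$, deleting any single vertex $s$ leaves the $(n-1)$-set $V(G)-\{s\}$, and any set of $n-1$ vertices is an adjacency resolving set (each retained vertex has a $0$ in its own coordinate of the adjacency vector, while the unique omitted vertex has all coordinates at least $1$), so $S$ is a fault-tolerant adjacency resolving set. Consequently $\ftadim(G) = n$ is equivalent to $\neg\bigl(\ftadim(G) < n\bigr)$.

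Next I would simply apply Theorem~\ref{thm:ftadim<n}: it states that $\ftadim(G) < n$ holds if and only if there is a vertex $v \in V(G)$ with $N(u)-\{v\} \neq N(v)-\{u\}$ for all $u \in V(G)-\{v\}$. Negating both sides of this biconditional and combining with the equivalence above, $\ftadim(G) = n$ holds if and only if for every $v \in V(G)$ there exists $u \in V(G)-\{v\}$ with $N(u)-\{v\} = N(v)-\{u\}$, which is exactly the stated characterization. This mirrors the way Corollary~\ref{cor:ftdim_n} is deduced from Theorem~\ref{thm:ftdim<n}.

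There is essentially no obstacle; the only point to be mildly careful about is the quantifier negation, where the existential over $v$ paired with the universal over $u$ becomes a universal over $v$ paired with an existential over $u$, while the relation $\neq$ flips to $=$. If a self-contained argument were preferred to citing Theorem~\ref{thm:ftadim<n}, one could reproduce its proof (which itself copies the proof of Theorem~\ref{thm:ftdim<n}): if $V(G)-\{v\}$ fails to be a fault-tolerant adjacency resolving set for every choice of $v$, then for some $v$ and some $u$ the set $V(G)-\{u,v\}$ cannot $1$-resolve $u$ and $v$, which forces $N(u)-\{v\} = N(v)-\{u\}$; letting $v$ range over all vertices yields the claim. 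Either route is a couple of lines.
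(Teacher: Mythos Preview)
Your proposal is correct and matches the paper's intended argument: the corollary is stated without proof immediately after Theorem~\ref{thm:ftadim<n}, and the paper has already noted just before that theorem that $\ftadim(G)\le n$ always, so the corollary is precisely the contrapositive you describe.
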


For multiple results in this section, we will use the following construction of an infinite family of connected graphs $A_1, A_2, \dots$ that are very similar to the family $H_1, H_2, \dots$ that was defined in Section~\ref{sec:edim}. Start with a copy of $H_k$, and then add edges between every pair of vertices $u_i, u_j$ for $1 \le i < j \le k$, so that the vertices $u_1, u_2, \dots, u_k$ form a clique of size $k$. Delete any vertices with the same adjacency vector as the center vertex $c$ with respect to $S = \left\{u_1, \dots, u_k\right\}$. Call the resulting graph $A_k$. 

With similar arguments, the only deleted vertex is labeled with the all-$1$ binary string, so $A_k$ has order $k+2^k$. Moreover, $\adim(A_k) \le k$ since $S$ is an adjacency resolving set for $A_k$. We must also have $\adim(A_k) \ge k$, since Geneson and Yi proved that any graph $G$ with $\adim(G) = x$ must have order at most $x+2^x$ \cite{gy}. Thus, $\adim(A_k) = k$. Finally, note that every element of $S$ has degree $2^{k-1}+k-1$, since every element of $S$ is neighbors with all other elements of $S$, and the vertex that we delete to create $A_k$ was not a neighbor of any element of $S$ in $H_k$.

Geneson and Yi \cite{gy} showed that the maximum possible degree of any vertex in a graph $G$ with $\adim(G) = k$ is $2^k+k-1$. In the next result, we determine the maximum possible degree of any vertex in an adjacency resolving set of size $k$.

\begin{thm}\label{thm:adimdeg}
    Any vertex in an adjacency resolving set of size $k$ must have degree at most $2^{k-1}+k-1$. Moreover, this bound is sharp.
\end{thm}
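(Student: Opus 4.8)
The plan is to mimic the counting argument used for edge resolving sets in Theorem~\ref{thm:edimdeg}, but now keeping careful track of the coordinates that can equal $0$. Fix a graph $G$, an adjacency resolving set $S$ of size $k$, and a vertex $s \in S$. I would split the neighborhood $N(s)$ into two parts: the neighbors of $s$ that lie in $S$, and those that do not. The first part has size at most $k-1$ trivially, since $S-\{s\}$ has $k-1$ vertices. For the second part, I would observe that every neighbor $u \in N(s)\setminus S$ has adjacency vector $a_S(u)$ whose $s$-coordinate equals $\min(2,\dist(u,s)) = 1$, and whose remaining $k-1$ coordinates lie in $\{1,2\}$: they cannot be $0$ because $u \notin S$ means $u$ is distinct from each landmark, and they are capped at $2$ by definition. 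Hence there are at most $2^{k-1}$ possible adjacency vectors for such vertices, and since $S$ is an adjacency resolving set these vectors are pairwise distinct, so $|N(s)\setminus S| \le 2^{k-1}$. Adding the two bounds gives $\deg(s) \le (k-1) + 2^{k-1} = 2^{k-1}+k-1$, as claimed.

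For sharpness I would invoke the family $A_1, A_2, \dots$ constructed in the paragraph immediately preceding the theorem statement. There it is already established that $S = \{u_1,\dots,u_k\}$ is an adjacency resolving set of size $k$ for $A_k$ (indeed $\adim(A_k) = k$), and that every element of $S$ has degree exactly $2^{k-1}+k-1$: each $u_i$ is adjacent to the $2^{k-1}$ non-center vertices whose binary label has a $0$ in coordinate $i$, to the other $k-1$ vertices of the clique $\{u_1,\dots,u_k\}$, and the single deleted vertex (labeled with the all-$1$s string) was not adjacent to any $u_i$. This exhibits a vertex of degree $2^{k-1}+k-1$ in an adjacency resolving set of size $k$, matching the upper bound.

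I do not expect a genuine obstacle in the upper bound argument itself; the only subtlety is the bookkeeping point that a coordinate of $a_S(u)$ equals $0$ exactly when $u$ is the corresponding landmark, which is why restricting to neighbors outside $S$ is what reduces the count from $3^{k-1}$-style estimates to the sharp $2^{k-1}$. The more delicate part of the overall statement is really the verification, already carried out above, that the deletion used to build $A_k$ removes only the all-$1$s vertex and hence that $S$ still resolves $A_k$ while keeping the landmarks at the extremal degree; given that verification, the sharpness direction is immediate.
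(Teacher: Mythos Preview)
Your proposal is correct and essentially identical to the paper's proof: both split the neighbors of a landmark into those inside $S$ (equivalently, those whose adjacency vector has a $0$ in some non-$s$ coordinate) and those outside $S$, bounding the first by $k-1$ and the second by $2^{k-1}$ via the pigeonhole on $\{1,2\}^{k-1}$. The sharpness construction $A_k$ is used in exactly the same way.
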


\begin{proof}
    For the upper bound, consider a graph $G$ with an arbitrary vertex $v$ in an adjacency resolving set $X$ of size $k$. Without loss of generality, suppose that the first coordinate of $a_X(u)$ corresponds to $v$ for any vertex $u$ in $G$. For any neighbor $u$ of $v$, the first coordinate of $a_X(u)$ must be $1$. At most $k-1$ neighbors of $v$ may have $0$ in one of their coordinates besides the first coordinate. All other neighbors of $v$ may only $1$ or $2$ in all of their coordinates besides the first coordinate. Thus, $v$ has at most $k-1+2^{k-1}$ neighbors.

    For the lower bound, we use the graph $A_k$ which was defined at the beginning of the section. Recall that $\adim(A_k) = k$, $S = \left\{u_1, \dots, u_k\right\}$ is an adjacency resolving set for $A_k$, and every element of $S$ has degree $2^{k-1}+k-1$. Thus, our bound is sharp.
\end{proof}

\begin{cor}
The maximum possible minimum degree of a graph $G$ with $\adim(G)=k$ is at most $2^{k-1}+k-1$.    
\end{cor}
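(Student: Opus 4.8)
The plan is to obtain this corollary as an immediate consequence of Theorem~\ref{thm:adimdeg}. Let $G$ be any graph with $\adim(G) = k$. Then $G$ admits an adjacency resolving set $S$ of minimum size, so $|S| = k$; since $k \ge 1$ in any nondegenerate case, $S$ is nonempty. Pick an arbitrary vertex $s \in S$. By Theorem~\ref{thm:adimdeg}, the vertex $s$ lies in an adjacency resolving set of size $k$, hence its degree in $G$ is at most $2^{k-1}+k-1$. The minimum degree $\delta(G)$ is bounded above by the degree of any single vertex, in particular by $\deg_G(s)$, so $\delta(G) \le 2^{k-1}+k-1$. Taking the maximum of $\delta(G)$ over all graphs $G$ with $\adim(G) = k$ then yields the stated bound.

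There is essentially no obstacle here; the only point worth checking is that a minimum-size adjacency resolving set exists and is nonempty whenever $\adim(G) = k \ge 1$, which is immediate from the definition of $\adim$. (The case $k = 0$ corresponds only to the single-vertex graph and need not be considered.) I would present the argument in two lines, citing Theorem~\ref{thm:adimdeg} directly; a self-contained alternative would be to re-run the counting argument from the proof of that theorem on a vertex of an adjacency resolving set, but invoking the theorem is cleaner and avoids duplication. I would not claim that the bound is sharp: the graph $A_k$ used to establish sharpness in Theorem~\ref{thm:adimdeg} has minimum degree only $2$ for $k \ge 2$ (its low-degree vertices are the non-center vertices of the $K_{1,2^k}$ part whose binary labels have weight $k-1$), so the extremal family for maximum degree does not transfer to minimum degree, and the corollary as stated asserts only the upper bound.
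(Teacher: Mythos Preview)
Your proof is correct and is exactly the intended argument: the paper states this corollary immediately after Theorem~\ref{thm:adimdeg} with no separate proof, so the one-line deduction via a vertex of a minimum adjacency resolving set is precisely what is expected. Your remark that the corollary asserts only the upper bound (and that the family $A_k$ does not witness sharpness for minimum degree) is also accurate and matches the paper's phrasing.
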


In the next result, we obtain an upper bound on fault-tolerant adjacency dimension in terms of adjacency dimension.

\begin{thm}\label{thm:ftadim_upper}
    For every graph $G$ of order greater than $1$, we have $\ftadim(G) \le \adim(G)+2^{\adim(G)}$.
\end{thm}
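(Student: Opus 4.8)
The plan is to \emph{not} use the closed-neighborhood template of the previous upper bounds here, but instead to exploit the fact that bounded adjacency dimension forces the host graph itself to be small. Concretely, I would first show that $\ftadim(G) \le n$ for every graph $G$ of order $n>1$, and then combine this with the known order bound of Geneson and Yi.

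For the first step, take $S = V(G)$ and check that for each $s \in V(G)$ the set $V(G) - \{s\}$ is an adjacency resolving set of $G$. Given distinct vertices $u,v$, they cannot both equal $s$, so without loss of generality $u \in V(G) - \{s\}$; then the coordinate of $a_{V(G)-\{s\}}(\cdot)$ indexed by $u$ equals $0$ for $u$ and $\min(2,\dist(u,v)) \ge 1$ for $v$, so $u$ and $v$ receive distinct adjacency vectors. Hence $V(G)$ (which is nonempty since $n>1$) is a fault-tolerant adjacency resolving set, giving $\ftadim(G) \le n$.

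For the second step, I would invoke the theorem of Geneson and Yi \cite{gy} (already used above in the construction of $A_k$) that every graph $G$ with $\adim(G) = x$ has order at most $x + 2^x$. Applying it with $x = \adim(G)$ gives $n \le \adim(G) + 2^{\adim(G)}$, and chaining the two inequalities yields $\ftadim(G) \le n \le \adim(G) + 2^{\adim(G)}$.

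There is essentially no hard part here; the only things requiring care are the (easy) verification that $V(G) - \{s\}$ always adjacency-resolves $G$ when $n>1$ and the correct invocation of the order bound. It is worth remarking why the closed-neighborhood approach used for $\ftdim$ and $\ftedim$ is the wrong tool in this setting: by Theorem~\ref{thm:adimdeg} a vertex of an adjacency resolving set of size $k$ has degree only at most $2^{k-1}+k-1$, so taking $S \cup \bigcup_{s \in S} N(s)$ and repairing the at most one leftover unresolved pair per $s$ would produce a bound of order $k\cdot 2^{k-1}$, which is larger than $k + 2^k$ already for small $k$. It is precisely the small-order phenomenon peculiar to adjacency dimension that makes the tight constant $\adim(G) + 2^{\adim(G)}$ available.
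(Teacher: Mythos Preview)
Your proposal is correct and matches the paper's own proof essentially verbatim: the paper also observes that the whole vertex set is a fault-tolerant adjacency resolving set (so $\ftadim(G)\le n$) and then applies the Geneson--Yi order bound $n\le \adim(G)+2^{\adim(G)}$. Your extra paragraph explaining why the closed-neighborhood template is suboptimal here is a nice addition but not part of the paper's argument.
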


\begin{proof}
    Geneson and Yi \cite{gy} proved that every graph $G$ with $\adim(G) = k$ has order at most $k+2^k$. Clearly, the whole vertex set is a fault-tolerant adjacency resolving set for any graph $G$, so  $\ftadim(G) \le \adim(G)+2^{\adim(G)}$.
\end{proof}

The next result shows that Theorem~\ref{thm:ftadim_upper} is sharp up to a constant factor.

\begin{thm}
    For every positive integer $k$, we have $\adim(A_k) = k$ and $\ftadim(A_k) \ge 2^{k-1}$.
\end{thm}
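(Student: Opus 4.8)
The equality $\adim(A_k) = k$ was already established in the discussion preceding the statement, so the remaining task is the lower bound $\ftadim(A_k) \ge 2^{k-1}$. The plan is to mirror the argument used for Theorem~\ref{thm:ftedim_lower}. Let $T$ be any fault-tolerant adjacency resolving set for $A_k$. By Lemma~\ref{lem:ftxdim} we have $|T| \ge \adim(A_k)+1 \ge 2$, so in particular $T$ itself is an adjacency resolving set (it contains $T-\{s\}$ for some $s$), and $T-\{u_1\}$ is an adjacency resolving set. Recall that the non-center vertices of the star part of $A_k$ are labeled by all binary strings of length $k$ except the all-$1$s string $1^k$ (the unique deleted vertex). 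Partition the $2^k$ binary strings into $2^{k-1}$ pairs, each consisting of two strings differing only in the first digit; exactly one of these pairs, namely $\{01^{k-1}, 1^k\}$, loses its second member in $A_k$, so there remain $2^{k-1}-1$ \emph{complete} pairs, each corresponding to two genuine vertices of $A_k$.

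The crucial step is the claim that for a complete pair $\{v,w\}$, where the label of $v$ begins with $0$, the label of $w$ begins with $1$, and the two labels agree in all other positions, the only vertices of $A_k$ whose capped-at-$2$ distances to $v$ and to $w$ differ are $v$, $w$, and $u_1$. I would verify this by cases. Since $w \ne 1^k$, both $v$ and $w$ have a $0$ somewhere among positions $2,\dots,k$; combined with the clique on $\{u_1,\dots,u_k\}$, this forces every vertex to be within distance $2$ of both $v$ and $w$, so the capping genuinely matters. The center $c$ is adjacent to both $v$ and $w$ (capped distance $1$ to each); any other non-center vertex is non-adjacent to both, since non-center vertices are pairwise non-adjacent, and hence at capped distance $2$ from each; each $u_i$ with $i\ne 1$ is adjacent to $v$ if and only if it is adjacent to $w$ (their labels agree in position $i$), giving capped distance $1$ to both in the adjacent case and $2$ to both in the non-adjacent case; and finally $u_1$ is adjacent to $v$ but not to $w$, so its capped distances are $1$ and $2$ respectively. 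This establishes the claim.

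Given the claim, I finish with the same ``$+1$'' bookkeeping as in Theorem~\ref{thm:ftedim_lower}. If $u_1 \in T$, then since $T-\{u_1\}$ is an adjacency resolving set it must resolve each complete pair $\{v,w\}$, hence must contain $v$ or $w$; as the $2^{k-1}-1$ complete pairs are pairwise disjoint and disjoint from $\{u_1\}$, we get $|T| \ge (2^{k-1}-1)+1 = 2^{k-1}$. If instead $u_1 \notin T$, then for each complete pair $T$ (being an adjacency resolving set) still contains $v$ or $w$, say $v$; applying fault tolerance to $s=v$, the set $T-\{v\}$ must resolve $\{v,w\}$, which forces $w \in T$ as well since $v, u_1 \notin T-\{v\}$. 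Thus $|T| \ge 2(2^{k-1}-1) = 2^k-2 \ge 2^{k-1}$ for $k \ge 2$, while $\ftadim(A_1) \ge 1 = 2^0$ is trivial. In all cases $\ftadim(A_k) \ge 2^{k-1}$. The main obstacle is the case analysis in the second paragraph: one must check carefully that the clique edges among the $u_i$'s (which are absent in $H_k$) really do collapse every relevant distance to at most $2$, so that after capping the only vertex left distinguishing $v$ from $w$ is $u_1$.
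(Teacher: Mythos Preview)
Your proof is correct and follows the same strategy as the paper: partition the surviving non-center vertices into pairs differing only in the first digit, show that for each complete pair $\{v,w\}$ the only vertices $1$-resolving them are $v$, $w$, and $u_1$, and conclude that $T-\{u_1\}$ must pick up at least one vertex from each of the $2^{k-1}-1$ complete pairs. Your verification of the key claim matches the paper's case-by-case check on $c$, on the other non-center vertices, and on the $u_i$.

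One difference worth noting: the paper simply asserts that $T-\{u_1\}$ is an adjacency resolving set and stops at $|T-\{u_1\}|\ge 2^{k-1}-1$, which strictly speaking only yields $|T|\ge 2^{k-1}$ when $u_1\in T$. You go further and explicitly treat the case $u_1\notin T$, showing via a second application of fault tolerance that then \emph{both} members of each complete pair must lie in $T$, giving the stronger bound $|T|\ge 2^k-2$ there. This extra bookkeeping is not in the paper but is a genuine (if minor) completion of the argument.
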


\begin{proof}
It remains to prove that $\ftadim(A_k) \ge 2^{k-1}$ for every positive integer $k$.
    Given a fault-tolerant adjacency resolving set $T$ for $A_k$, note that $T-\left\{u_1\right\}$ must be an adjacency resolving set for $H_k$. In $A_k$, consider the $2^k-1$ vertices which are neighbors of the center vertex $c$ in the copy of $K_{1,2^k-1}$. We partition these $2^k-1$ vertices into $2^{k-1}-1$ pairs of vertices and a single remaining vertex, where the labels of the two vertices in each pair differ only in the first digit of their binary strings. 
    
    If vertices $v$ and $w$ are in the same pair, then the only vertices in $A_k$ that $1$-resolve $v$ and $w$ are $v$, $w$, and $u_1$. Indeed, the center vertex $c$ has distance $1$ to both $v$ and $w$. Every non-center vertex in the copy of $K_{1,2^k-1}$ that is not equal to $v$ or $w$ has distance $2$ to both $v$ and $w$. Finally, every vertex $u_i$ with $i \neq 1$ has distance $1$ to both $v$ and $w$ if $v$ and $w$ have $i^{\text{th}}$ digit $0$, and otherwise $u_i$ has distance $2$ to both $v$ and $w$. Thus, the only vertices in $A_k$ that $1$-resolve $v$ and $w$ are $v$, $w$, and $u_1$.
    
    So, in order for $T-\left\{u_1\right\}$ to be an adjacency resolving set for $A_k$, we must have $v \in (T-\left\{u_1\right\})$ or $w \in (T-\left\{u_1\right\})$. Thus, every pair in the partition must contribute at least one vertex to $T-\left\{u_1\right\}$. Therefore, $T-\left\{u_1\right\}$ must have size at least $2^{k-1}-1$.
\end{proof}

\begin{cor} \label{cor:adim} We have
    \[\lim_{k \rightarrow \infty} \left( \max_{G: \text{ } \adim(G) = k} \ftadim(G) \right) = \Theta(2^k).\]
\end{cor}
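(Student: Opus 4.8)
The plan is simply to combine the matching upper and lower bounds established in the two results immediately preceding the corollary. Write $f(k) = \max_{G:\ \adim(G) = k} \ftadim(G)$; the statement should be read as the assertion that $f(k) = \Theta(2^k)$ as $k \to \infty$ (the ``limit'' is shorthand for the order of growth of this maximum).

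For the upper bound, I would invoke Theorem~\ref{thm:ftadim_upper}, which gives $\ftadim(G) \le \adim(G) + 2^{\adim(G)}$ for every graph $G$ of order greater than $1$. Hence any such $G$ with $\adim(G) = k$ satisfies $\ftadim(G) \le k + 2^k$, so $f(k) \le k + 2^k = O(2^k)$.

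For the lower bound, I would use the theorem immediately above the corollary, which produces the connected graphs $A_k$ with $\adim(A_k) = k$ and $\ftadim(A_k) \ge 2^{k-1}$. Since $A_k$ is one of the graphs over which the maximum defining $f(k)$ ranges, this yields $f(k) \ge 2^{k-1} = \Omega(2^k)$. Putting the two bounds together gives $2^{k-1} \le f(k) \le k + 2^k$, i.e., $f(k) = \Theta(2^k)$, as claimed.

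There is no real obstacle here: all of the substantive work has already been carried out in Theorem~\ref{thm:ftadim_upper} and the theorem preceding the corollary. The only points worth spelling out are the correct reading of the informal ``limit'' notation, and the remark that the same $\Theta(2^k)$ estimate holds when the maximum is restricted to connected graphs, since the witnessing family $A_1, A_2, \dots$ consists of connected graphs.
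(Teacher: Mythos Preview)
Your proposal is correct and matches the paper's intended argument: the corollary is an immediate consequence of combining the upper bound $\ftadim(G)\le\adim(G)+2^{\adim(G)}$ from Theorem~\ref{thm:ftadim_upper} with the lower bound $\ftadim(A_k)\ge 2^{k-1}$ from the theorem immediately preceding the corollary. Your remark about restricting to connected graphs is also exactly the observation the paper makes following the corollary.
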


If we restrict the maximum in the corollary so that it only ranges over connected graphs $G$ with $\adim(G) = k$, rather than over all graphs $G$ with $\adim(G) = k$, then we still obtain the same limit because the infinite family of graphs $A_1,A_2,\dots$ used in the proof of Theorem~\ref{thm:adimdeg} are connected.

\section{Fault tolerance for truncated metric dimension}\label{sec:ftdimk}

If $G$ has order $1$, then $\dim_k(G) = 0$ and $\ftdim_k(G) = 1$. However, for any graph $G$ of order at least $2$, we must have $\dim_k(G) \ge 1$ and $\ftdim_k(G) \ge 2$. We characterize the graphs with $\dim_k(G)=1$ in the next result. 

\begin{thm}\label{thm:dimk_1}
For all graphs $G$ and positive integers $k$, we have $\dim_k(G)=1$ if and only if $G$ is either $P_i$ for some $2 \le i \le k+2$ or the union of $P_j$ and a singleton for some $1 \le j \le k+1$.
\end{thm}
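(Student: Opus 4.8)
The plan is to prove both implications directly, analysing the single-coordinate truncated distance vector $u\mapsto\min(k+1,\dist(u,v))$ of a candidate landmark $v$, much as in the proof of Theorem~\ref{thm:dim_1} but keeping track of the truncation at $k+1$.

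For the forward direction, suppose $\dim_k(G)=1$ with landmark $v$; then $u\mapsto\min(k+1,\dist(u,v))$ is injective on $V(G)$, and $|V(G)|\ge 2$ since $\dim_k(G)\ge 1$. This function takes values in $\{0,1,\dots,k+1\}$, and it sends to $k+1$ exactly the vertices at graph-distance at least $k+1$ from $v$ --- in particular every vertex outside the component $C_v$ of $v$. Hence injectivity forces: (a) at most one such \emph{far} vertex in all of $G$, and (b) at most one vertex at each distance $d\in\{0,\dots,k\}$ from $v$; in particular $\deg(v)\le 1$. The structural core is then a layering argument: writing $L_d$ for the set of vertices at distance exactly $d$ from $v$, the nonempty $L_d$ form an initial segment $L_0,\dots,L_m$ by connectivity of $C_v$, each $L_d$ is a single vertex $v_d$ (for $d\le k$ by (b), for $d>k$ because the total number of far vertices is at most one), no edge joins two vertices of one layer, and the edge $v_dv_{d+1}$ is forced since $v_{d+1}$ needs a neighbour at distance $d$. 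Therefore $C_v$ is exactly the path $v_0v_1\cdots v_m$ with $v=v_0$ an endpoint; what remains is to bound $m$ and to describe the vertices outside $C_v$.

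I would finish the forward direction with a case split on the location of the at-most-one far vertex. If there is no far vertex, then $G$ is connected (any other component consists only of far vertices) and $m\le k$, so $G=P_{m+1}=P_i$ with $2\le i\le k+1$. If the unique far vertex $x$ lies in $C_v$, then $G=C_v$ is connected, $x=v_m$ with $m=\dist(v,x)\ge k+1$, and the vertices $v_{k+1},\dots,v_m$ are all far, so there is exactly one of them, i.e.\ $m=k+1$ and $G=P_{k+2}$. If the unique far vertex $x$ lies outside $C_v$, then its component is the single vertex $x$ (a second vertex there would be a second far vertex) and there is no further component, while inside $C_v$ every distance to $v$ is at most $k$, so $m\le k$ and $G=P_{m+1}\cup K_1$ with $1\le m+1\le k+1$. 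Combining these cases gives precisely the graphs in the statement (the connected ones being $P_i$ for $2\le i\le k+2$).

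The backward direction is a direct check. If $G=P_i$ with $2\le i\le k+2$, let $v$ be an endpoint: the distances from $v$ to the vertices of $G$ are $0,1,\dots,i-1$, which are pairwise distinct and all at most $k+1$, so $\{v\}$ is a $k$-truncated resolving set, and since $|V(G)|\ge 2$ we get $\dim_k(G)=1$. If $G=P_j\cup K_1$ with $1\le j\le k+1$, let $v$ be an endpoint of the $P_j$ (the lone vertex of that component if $j=1$): the distances from $v$ along the path are $0,1,\dots,j-1\le k$ and the isolated vertex has truncated distance $k+1$, so the $j+1$ truncated distances are distinct and $\dim_k(G)=1$. The only delicate point in the whole argument is the case analysis on where the far vertex sits: this is exactly what separates the bound $i\le k+2$ for paths from the bound $j\le k+1$ for a path plus a singleton, and the rest is bookkeeping parallel to Theorem~\ref{thm:dim_1}.
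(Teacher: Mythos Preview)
Your proof is correct and follows essentially the same approach as the paper: both argue from the injectivity of the single truncated-distance coordinate to pin down the structure of $G$. The only organizational difference is that the paper bounds the order by $k+2$ up front and then uses degree constraints (landmark has degree $\le 1$, all vertices have degree $\le 2$, no cycle components, at most two components with at most one of order $\ge 2$) to conclude the path-or-path-plus-singleton structure, whereas you build the path component of the landmark explicitly via BFS layers and then do a case split on the location of the unique far vertex. Your case analysis makes the distinction between the bounds $i\le k+2$ and $j\le k+1$ more transparent than the paper's version, which simply inherits both from the global order bound; otherwise the arguments are interchangeable.
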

\begin{proof}
Suppose that $G$ is either $P_i$ for some $2 \le i \le k+2$ or the union of $P_j$ and a singleton for some $1 \le j \le k+1$. Since $G$ has order at least $2$, we must have $\dim_k(G) > 0$. Since either endpoint of the $P_i$ or $P_j$ is a $k$-resolving set for $G$, we have $\dim_k(G) = 1$.

For the other direction, suppose that $G$ is a graph with $\dim_k(G)=1$, and let $S = \{s\}$ be a $k$-resolving set of size $1$. Since the truncated distance of any vertex to $s$ is capped at $k+1$, $G$ has order at most $k+2$. Moreover, $G$ has order at least $2$, or else we would have $\dim_k(G) = 0$.

The vertex $s$ must have degree at most $1$, or else it would not be able to $k$-resolve its neighbors. Every vertex $v$ of $G$ must have degree at most $2$, or else $s$ would not be able to $k$-resolve the neighbors of $v$. No connected component of $G$ can be a cycle, or else at least two landmarks would be required to $k$-resolve $G$. Thus, every connected component of $G$ is a path. 

$G$ has at most $2$ connected components, or else $s$ would be unable to resolve two vertices that are in different connected components from $s$ and from each other. Finally, $G$ has at most $1$ connected component of order at least $2$, or else there would be a pair of vertices that are not in the same connected component as $s$, and thus, cannot be $k$-resolved by $s$. Therefore, $G$ is either $P_i$ for some $2 \le i \le k+2$ or the union of $P_j$ and a singleton for some $1 \le j \le k+1$.
\end{proof}

In the following result, we characterize the graphs $G$ that attain the minimum possible value of $\ftdim_k(G)$ among graphs of order at least $2$. 

\begin{thm}
For all graphs $G$ and positive integers $k$, we have $\ftdim_k(G)=2$ if and only if $G$ is either $P_i$ for some $2 \le i \le k+2$ or the union of $P_j$ and a singleton for some $1 \le j \le k+1$.
\end{thm}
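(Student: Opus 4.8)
The plan is to prove both implications directly, in exact parallel with the earlier characterizations of the graphs with $\ftdim(G)=2$, $\ftedim(G)=2$, and $\ftadim(G)=2$, using Theorem~\ref{thm:dimk_1} as the main input and Lemma~\ref{lem:ftxdim} to pass between $\ftdim_k$ and $\dim_k$.

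For the backward direction, I would suppose that $G$ is $P_i$ with $2\le i\le k+2$ or the disjoint union of $P_j$ and a singleton with $1\le j\le k+1$. In every such case $G$ has order at least $2$, so $\dim_k(G)\ge 1$ and hence $\ftdim_k(G)\ge 2$. For the matching upper bound I would take $S$ to be the set of the two endpoints of the longest path component of $G$ when $G$ has order at least $3$, and $S=V(G)$ when $G$ has order $2$ (the cases $G=P_2$ and $G=P_1\cup P_1$). Then for each $s\in S$ the set $S-\{s\}$ is a single vertex which is an endpoint of the longest path component of $G$, and by the backward direction of Theorem~\ref{thm:dimk_1} such a single vertex $k$-resolves $G$. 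Thus $S$ is a fault-tolerant $k$-resolving set of size $2$, so $\ftdim_k(G)\le 2$, giving $\ftdim_k(G)=2$.

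For the forward direction, I would suppose $\ftdim_k(G)=2$. Then $G$ must have order at least $2$, since otherwise $\dim_k(G)=0$ and $\ftdim_k(G)=1$; in particular $\dim_k(G)\ge 1$. By Lemma~\ref{lem:ftxdim} we have $\dim_k(G)\le \ftdim_k(G)-1=1$, hence $\dim_k(G)=1$, and Theorem~\ref{thm:dimk_1} then forces $G$ to be $P_i$ for some $2\le i\le k+2$ or the union of $P_j$ and a singleton for some $1\le j\le k+1$, as required.

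I expect no genuine obstacle here: the argument is routine once Theorem~\ref{thm:dimk_1} is available, and indeed it is the same template already used three times in the paper. The only point that needs a little care is the bookkeeping of the order-$2$ graphs in the backward direction, where the phrase ``the two endpoints of the longest path component'' must be read as ``all of $V(G)$''; this is handled exactly as the analogous degenerate cases were handled in the earlier $\ftdim(G)=2$ proof. One can also record the immediate corollary that $\ftdim_k(G)=2$ if and only if $\dim_k(G)=1$, mirroring the corollaries stated for the other variants.
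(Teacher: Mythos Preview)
Your proposal is correct and follows essentially the same approach as the paper: both directions proceed exactly as you describe, with the backward direction exhibiting the two endpoints (or $V(G)$ in the order-$2$ case) as a fault-tolerant $k$-resolving set, and the forward direction invoking Lemma~\ref{lem:ftxdim} and Theorem~\ref{thm:dimk_1}. The corollary you anticipate is also stated in the paper immediately after this theorem.
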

\begin{proof}
Suppose that $G$ is either $P_i$ for some $2 \le i \le k+2$ or the union of $P_j$ and a singleton for some $1 \le j \le k+1$. Since $G$ has order at least $2$, we must have $\ftdim_k(G) > 1$. If $G$ is the union of two singletons, then let $S = V(G)$. Otherwise, if $G$ contains a path $P$ with at least $2$ vertices, then let $S$ consist of the two endpoints of $P$. In either case, $S$ is clearly a fault-tolerant $k$-resolving set for $G$, so we have $\ftdim_k(G) = 2$.

For the other direction, suppose that $\ftdim_k(G)=2$. Then $G$ has order at least $2$. By Lemma~\ref{lem:ftxdim}, we have $\dim_k(G) \le \ftdim_k(G)-1 = 1$. Thus, by Theorem~\ref{thm:dimk_1} we can conclude that $G$ is either $P_i$ for some $2 \le i \le k+2$ or the union of $P_j$ and a singleton for some $1 \le j \le k+1$.
\end{proof}

\begin{cor}
    For all graphs $G$ and positive integers $k$, we have $\ftdim_k(G) = 2$ if and only if $\dim_k(G) = 1$.
\end{cor}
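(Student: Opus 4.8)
The statement to prove is the corollary: for all graphs $G$ and positive integers $k$, we have $\ftdim_k(G) = 2$ if and only if $\dim_k(G) = 1$. This is an immediate consequence of the theorem that immediately precedes it together with Theorem~\ref{thm:dimk_1}, so the plan is to chain those two characterizations.

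My plan is as follows. For the forward direction, assume $\ftdim_k(G) = 2$. By the preceding theorem, this holds exactly when $G$ is either $P_i$ for some $2 \le i \le k+2$ or the disjoint union of $P_j$ with a singleton for some $1 \le j \le k+1$. But by Theorem~\ref{thm:dimk_1}, that same class of graphs is precisely the class of graphs with $\dim_k(G) = 1$, so $\dim_k(G) = 1$. For the reverse direction, assume $\dim_k(G) = 1$. Then by Theorem~\ref{thm:dimk_1}, $G$ is $P_i$ for some $2 \le i \le k+2$ or the union of $P_j$ and a singleton for some $1 \le j \le k+1$, and the preceding theorem then gives $\ftdim_k(G) = 2$. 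Alternatively, one could argue more directly: if $\dim_k(G) = 1$ then $G$ has order at least $2$ (so $\ftdim_k(G) \ge 2$ by Lemma~\ref{lem:ftxdim}), and taking the two endpoints of the nontrivial path component (or both singletons when $G$ is two singletons) yields a fault-tolerant $k$-resolving set of size $2$; I would likely just cite the two theorems to avoid repeating that construction.

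There is essentially no obstacle here: the corollary is a formal restatement that both $\ftdim_k(G) = 2$ and $\dim_k(G) = 1$ characterize exactly the same family of graphs (paths of bounded length, possibly with an extra isolated vertex). The only thing to be slightly careful about is making sure the two theorems are being quoted with matching hypotheses — both allow $G$ to be an arbitrary simple graph and $k$ to be any positive integer — so no edge cases slip through. The proof is therefore a two-line appeal to Theorem~\ref{thm:dimk_1} and the immediately preceding theorem.

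\begin{proof}
If $\ftdim_k(G) = 2$, then by the preceding theorem, $G$ is either $P_i$ for some $2 \le i \le k+2$ or the union of $P_j$ and a singleton for some $1 \le j \le k+1$. In either case, Theorem~\ref{thm:dimk_1} gives $\dim_k(G) = 1$. Conversely, if $\dim_k(G) = 1$, then Theorem~\ref{thm:dimk_1} shows that $G$ is either $P_i$ for some $2 \le i \le k+2$ or the union of $P_j$ and a singleton for some $1 \le j \le k+1$, and then the preceding theorem gives $\ftdim_k(G) = 2$.
\end{proof}
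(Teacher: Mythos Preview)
Your proof is correct and takes essentially the same approach as the paper: the corollary is stated without an explicit proof there, since it follows immediately by combining Theorem~\ref{thm:dimk_1} with the preceding theorem characterizing when $\ftdim_k(G)=2$, which is exactly what you do.
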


As with standard metric dimension and adjacency dimension, it is easy to see for all $k \ge 1$ that $\ftdim_k(G) \le n$ for all graphs $G$ of order $n$, since any set of $n-1$ vertices in $G$ is a $k$-truncated resolving set. The next result shows that this upper bound is sharp and generalizes Theorem~\ref{thm:ftadim<n}; the proof is the same as Theorem~\ref{thm:ftdim<n}.

\begin{thm}
    For all $k \ge 1$ and graphs $G$ of order $n$, we have $\ftdim_k(G) < n$ if and only if there exists $v \in V(G)$ such that for all $u \in V(G)$ with $u \neq v$ we have $N(u)-\left\{v\right\} \neq N(v)-\left\{u\right\}$.
\end{thm}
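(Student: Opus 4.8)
The plan is to reproduce the proof of Theorem~\ref{thm:ftdim<n} almost verbatim, replacing the distance $\dist(\cdot,\cdot)$ everywhere by the truncated distance $\min(k+1,\dist(\cdot,\cdot))$. The one thing that needs checking is that truncation does not interfere with the argument, and the reason it does not is simple: the neighborhood condition $N(u)-\{v\}\ne N(v)-\{u\}$ is witnessed by a vertex at distance exactly $1$ from one of $u,v$ and distance at least $2$ from the other, and since $k+1\ge 2$ this gap survives truncation.

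Concretely, I would first isolate the following observation, which is the heart of the matter: for distinct vertices $u,v$, the set $V(G)-\{u,v\}$ $k$-resolves the pair $u,v$ if and only if $N(u)-\{v\}\ne N(v)-\{u\}$. For the ``if'' direction, pick $w\ne u,v$ adjacent to exactly one of $u,v$; then one of $\dist(w,u),\dist(w,v)$ equals $1$ and the other is at least $2$, so $\min(k+1,\dist(w,u))\ne\min(k+1,\dist(w,v))$ since $k+1\ge 2$, and $w$ $k$-resolves $u,v$. For the ``only if'' direction I would prove the stronger statement that $N(u)-\{v\}=N(v)-\{u\}$ forces $\dist(w,u)=\dist(w,v)$ for every $w\ne u,v$ (and hence all truncated distances also agree): taking a shortest $w$--$u$ path, if it avoids $v$ then its penultimate vertex lies in $N(u)-\{v\}=N(v)-\{u\}$, so it can be rerouted to end at $v$ without increasing the length, giving $\dist(w,v)\le\dist(w,u)$; if instead the path passes through $v$, then $\dist(w,u)=\dist(w,v)+\dist(v,u)>\dist(w,v)$; either way $\dist(w,v)\le\dist(w,u)$, and symmetry yields equality.

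Given this observation, both directions of the theorem are immediate. For the backward direction: suppose $v$ has the stated property and set $S=V(G)-\{v\}$, which is nonempty for $n\ge 2$ (the case $n=1$ being trivial). For each $s\in S$, the set $S-\{s\}=V(G)-\{s,v\}$ contains every vertex except $s$ and $v$, so it $k$-resolves every pair of vertices except possibly $\{s,v\}$; but $N(s)-\{v\}\ne N(v)-\{s\}$, so by the observation it $k$-resolves $\{s,v\}$ as well. Hence $S$ is a fault-tolerant $k$-resolving set and $\ftdim_k(G)\le n-1<n$. For the forward direction: if $\ftdim_k(G)<n$, then some $V(G)-\{v\}$ is a fault-tolerant $k$-resolving set, so $V(G)-\{u,v\}$ is a $k$-resolving set, and in particular $k$-resolves $u,v$, for every $u\ne v$; by the observation, $N(u)-\{v\}\ne N(v)-\{u\}$ for all such $u$.

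The only genuine content is the ``only if'' half of the observation, namely the rerouting argument showing that identical neighborhoods outside $\{u,v\}$ force equal distances; everything else is bookkeeping identical to Theorem~\ref{thm:ftdim<n}. The only point that requires any care is the interaction with truncation, and this is harmless precisely because the neighborhood condition always produces a witness separating the values $1$ and ``$\ge 2$'', and $k+1\ge 2$; in particular the characterization is the same for every $k\ge 1$.
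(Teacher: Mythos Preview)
Your proof is correct and follows essentially the same approach as the paper, which simply states that the proof is identical to that of Theorem~\ref{thm:ftdim<n}. You are in fact more careful than the paper: you make explicit the rerouting argument showing that $N(u)-\{v\}=N(v)-\{u\}$ forces $\dist(w,u)=\dist(w,v)$ for all $w\ne u,v$, and you verify that truncation by $k+1\ge 2$ does not disturb the adjacency witness, both of which the paper leaves implicit.
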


\begin{cor}
    For all $k \ge 1$ and graphs $G$ of order $n$, we have $\ftdim_k(G) = n$ if and only if for all $v \in V(G)$ there exists $u \in V(G)$ with $u \neq v$ such that $N(u)-\left\{v\right\} = N(v)-\left\{u\right\}$.
\end{cor}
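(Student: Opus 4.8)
The plan is to deduce the Corollary directly from the Theorem immediately preceding it, by negating both sides of that biconditional, after first recording the elementary bound $\ftdim_k(G)\le n$ for every graph $G$ of order $n$ and every $k\ge 1$. For this bound I would take $S=V(G)$. For an arbitrary $s\in S$, the set $S-\{s\}$ is a $k$-truncated resolving set: each of its $n-1$ vertices $w$ has a $0$ in its own coordinate, whereas every other vertex of $G$ --- in particular $s$, whose coordinates are all $\min(k+1,\dist(s,\cdot))\ge 1$ under the convention $\min(k+1,\infty)=k+1$ (which also covers isolated vertices and disconnected $G$) --- has a nonzero entry there. So all $n$ truncated vectors are distinct, $V(G)$ is a fault-tolerant $k$-truncated resolving set, and $\ftdim_k(G)\le n$. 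Consequently $\ftdim_k(G)=n$ if and only if $\ftdim_k(G)\not<n$.

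It then suffices to negate the preceding Theorem, which says that $\ftdim_k(G)<n$ exactly when there is a vertex $v$ with $N(u)-\{v\}\ne N(v)-\{u\}$ for all $u\ne v$. That Theorem is proved exactly along the lines of Theorem~\ref{thm:ftdim<n}. For the forward direction one may choose $v$ with $V(G)-\{v\}$ a fault-tolerant $k$-resolving set; then $V(G)-\{u,v\}$ must $k$-resolve $u,v$ for every $u\ne v$, so some $w\notin\{u,v\}$ has $\min(k+1,\dist(w,u))\ne\min(k+1,\dist(w,v))$ and hence $\dist(w,u)\ne\dist(w,v)$; a neighbor of the closer of $u,v$ lying on a shortest $w$-path to it is then adjacent to exactly one of $u$ and $v$, which forces $N(u)-\{v\}\ne N(v)-\{u\}$. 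For the converse, a vertex adjacent to exactly one of $v$ and $s$ has distance $1$ to it and at least $2$ to the other; since $k\ge 1$ these truncate to distinct values, so $V(G)-\{v,s\}$ $k$-resolves $v,s$ while resolving every other pair trivially, and $V(G)-\{v\}$ is a fault-tolerant $k$-resolving set. Negating the Theorem gives: $\ftdim_k(G)\not<n$ if and only if for every $v\in V(G)$ there is some $u\ne v$ with $N(u)-\{v\}=N(v)-\{u\}$. Chaining this with $\ftdim_k(G)=n\iff\ftdim_k(G)\not<n$ yields the Corollary.

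The Corollary itself presents no real obstacle --- it is a one-line consequence of the Theorem. The only genuinely non-bookkeeping ingredient lives in the proof of the Theorem: the implication that $\dist(w,u)\ne\dist(w,v)$ forces some vertex adjacent to exactly one of $u,v$, via the ``neighbor on a shortest path'' argument (which must be handled with a little care when $u$ and $v$ are themselves adjacent). This argument is identical to the one in Theorem~\ref{thm:ftdim<n}, and the truncation does not affect it, since a pair distinguished under the $k$-truncated metric is distinguished under the ordinary metric. Beyond that, one only needs to be careful to state $\ftdim_k(G)\le n$ for all graphs rather than only connected ones, and to negate the $\exists v\,\forall u$ statement of the Theorem cleanly into the $\forall v\,\exists u$ statement of the Corollary.
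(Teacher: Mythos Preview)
Your proposal is correct and follows essentially the same route as the paper: the Corollary is obtained by combining the trivial bound $\ftdim_k(G)\le n$ with the logical negation of the preceding Theorem (whose proof the paper simply declares identical to that of Theorem~\ref{thm:ftdim<n}). Your sketch of that Theorem is in fact more careful than the paper's, since you explicitly produce a vertex adjacent to exactly one of $u,v$ via the neighbor-on-a-shortest-path argument, whereas the paper asserts the existence of such a vertex without justification.
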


Before proving some extremal bounds for $\ftdim_k(G)$ with respect to $\dim_k(G)$, we first discuss a construction from Geneson et al \cite{gkl} which was used to obtain a number of extremal results about metric dimension and pattern avoidance, solving several open problems from \cite{gmd}.

\begin{definition}\label{def:dk}
    Let $D_k$ be the graph that has vertex set $\mathbb{Z}_{\ge 0}^k$ with edges between every pair of vertices that differ by at most $1$ in each coordinate. 
\end{definition}

For the next result in this section, we use the following construction which is similar to one used in \cite{hernando} for bounding the order of graphs in terms of metric dimension and diameter. Let $A_k(q)$ denote the set of vertices in $D_k$ contained in the $k$-cube $[q,3q]^k$. Let $M_{k,0}(q)$ denote the set of $k$ vertices with one coordinate $0$ and all others equal to $2q$. For each $i$ with $0 \le i \le q-2$ in increasing order, let $M_{k,i+1}(q)$ denote the set of vertices with one coordinate equal to $i$ which are neighbors of some vertex in $M_{k,i}(q)$. Let \[R_k(q) = A_k(q) \cup \bigcup_{i = 0}^{q-1} M_{k,i}(q).\] The induced subgraph $I_k(q)$ of $D_k$ on the vertices of $R_k(q)$ has metric dimension at most $k$, and the coordinates of its vertices correspond to distance vectors for the resolving set consisting of the points in $M_{k,0}(q)$. In fact the metric dimension of $I_k(q)$ is $k$, because it has vertices with degree $3^k-1$ and it is known \cite{gkl} that the maximum possible degree of a vertex in a graph of metric dimension $k$ is $3^k-1$. For each $j$ with $1\le j\le q-1$, each landmark $v$ has $(2j+1)^{k-1}$ vertices $u \neq v$ with $\dist(u, v) = j$.

\begin{thm}\label{thm:dimkdeg}
    Any vertex in a $k$-resolving set of size $j$ has degree at most $3^{j-1}$. Moreover, this bound is sharp for $k \ge 2$.
\end{thm}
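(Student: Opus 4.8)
The plan is to prove the upper bound by the same counting argument on truncated distance vectors that was used for Theorems~\ref{thm:edimdeg} and~\ref{thm:adimdeg}, and to obtain sharpness from the grid gadget $I_k(q)$ introduced above, specialised to $q=1$. For the upper bound, let $S$ be a $k$-resolving set of size $j$ in a graph $G$ and fix $s\in S$. Every neighbour $u$ of $s$ has $s$-coordinate $\min(k+1,1)=1$ in $d_{S,k}(u)$, and for every other landmark $t\in S$ the triangle inequality gives $\dist(u,t)\in\{\dist(s,t)-1,\dist(s,t),\dist(s,t)+1\}$, so $\min(k+1,\dist(u,t))$ takes at most three values as $u$ ranges over $N(s)$. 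Since $S$ is $k$-resolving, the vectors $d_{S,k}(u)$ for $u\in N(s)$ are pairwise distinct; they all agree in the $s$-coordinate and have at most three options in each of the remaining $j-1$ coordinates, so $|N(s)|\le 3^{j-1}$.

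For sharpness when $k\ge 2$, I would take $G=I_j(1)$, the subgraph of $D_j$ induced on $A_j(1)=[1,3]^j$ together with the $j$ landmark vertices $M_{j,0}(1)$, where $v_\ell\in M_{j,0}(1)$ has $\ell$th coordinate $0$ and all other coordinates $2$. As recorded above, the coordinates of the vertices of $I_j(1)$ are exactly their distance vectors with respect to $\{v_1,\dots,v_j\}$, so this set resolves $G$; and since $(2,\dots,2)$ has degree $3^j-1$ in $G$ while a graph of metric dimension $m$ has maximum degree $3^m-1$ \cite{gkl}, we get $\dim(G)=j$. The key extra observation is that $\diam(G)\le 3$: distances within $[1,3]^j$ are at most $2$, and the distance from any $v_\ell$ to any other vertex is at most $3$. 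Hence for every $k\ge 2$ we have $k+1\ge 3\ge\diam(G)$, so $\min(k+1,\cdot)$ never changes a distance vector; a set is then $k$-resolving iff it is resolving, which gives $\dim_k(G)=\dim(G)=j$ and makes $\{v_1,\dots,v_j\}$ a $k$-resolving set of size $j$. Finally, the neighbours of $v_\ell$ in $G$ are precisely the $3^{j-1}$ points of $[1,3]^j$ with $\ell$th coordinate $1$, so $v_\ell$ attains the bound.

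I expect the only genuinely delicate point to be making sharpness hold for \emph{every} $k\ge 2$ rather than only for large $k$: the usual grid gadgets for metric dimension have diameter growing with their size, and truncating distances at $k+1$ could then merge distinct distance vectors, so one has to keep the gadget's diameter at most $k+1$. Choosing $q=1$ pins the diameter at $3$, which covers the worst case $k=2$. As a consistency check in the other direction, when $k=1$ the three distances $1,2,3$ collapse to $1,2$ after truncation at $2$, and the sharp value degrades to the adjacency bound $2^{j-1}+j-1$ of Theorem~\ref{thm:adimdeg} — which is exactly why the hypothesis $k\ge 2$ is present.
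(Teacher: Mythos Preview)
Your proof is correct and follows essentially the same approach as the paper: the paper obtains the upper bound by noting that a $k$-resolving set is in particular a resolving set and citing the $3^{j-1}$ degree bound from \cite{gkl} (which is exactly the triangle-inequality count you spell out), and it obtains sharpness via the same gadget $I_j(1)$, observing that all coordinates are at most $3$ so that $M_{j,0}(1)$ is a $k$-resolving set for $k\ge 2$. Your write-up simply unpacks both steps in more detail, including the explicit diameter check and the identification of the $3^{j-1}$ neighbours of a landmark.
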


\begin{proof}
    The upper bound follows from \cite{gkl} since any $k$-resolving set of size $j$ is also a resolving set of size $j$. To see that this bound is sharp for $k \ge 2$, consider the graph $I_k(1)$. All coordinates are at most $3$, so the resolving set $M_{k,0}(1)$ is also a $k$-resolving set.
\end{proof}

\begin{cor}
The maximum possible minimum degree of a graph $G$ with a $k$-resolving set of size $j$ is at most $3^{j-1}$.    
\end{cor}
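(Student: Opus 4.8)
The plan is to derive this immediately from Theorem~\ref{thm:dimkdeg}. Suppose $G$ is a graph admitting a $k$-resolving set $S$ with $|S| = j$. Since $S$ is nonempty, pick any vertex $v \in S$. By Theorem~\ref{thm:dimkdeg}, $v$ has degree at most $3^{j-1}$ in $G$. The minimum degree $\delta(G)$ is by definition at most the degree of every vertex of $G$, and in particular at most $\deg_G(v) \le 3^{j-1}$. Hence $\delta(G) \le 3^{j-1}$ for every such $G$, which is exactly the claimed bound on the maximum possible minimum degree.

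This mirrors the argument used for the analogous corollaries earlier in the paper (the one following Theorem~\ref{thm:edimdeg} for edge metric dimension and the one following Theorem~\ref{thm:adimdeg} for adjacency dimension), and there is no real obstacle: the only thing being used beyond Theorem~\ref{thm:dimkdeg} is that a $k$-resolving set is nonempty and that $\delta(G)$ never exceeds an individual vertex degree. One could optionally remark that the bound is attained (for $k \ge 2$) by the graph $I_k(1)$ from the discussion preceding Theorem~\ref{thm:dimkdeg}, since there every vertex of the $k$-resolving set $M_{k,0}(1)$ has degree $3^{j-1}$ where $j = k$; but since the corollary only asserts an upper bound, this remark is not strictly needed.
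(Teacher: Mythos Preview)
Your proposal is correct and matches the paper's intended argument: the corollary is stated without proof in the paper, as an immediate consequence of Theorem~\ref{thm:dimkdeg}, exactly via the observation that any vertex of the $k$-resolving set has degree at most $3^{j-1}$ and hence so does $\delta(G)$. Your optional remark on sharpness is not quite on target (having the landmarks attain degree $3^{j-1}$ does not by itself force the \emph{minimum} degree to equal $3^{j-1}$), but as you note, the corollary only asserts the upper bound, so this does not affect correctness.
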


Now, we are ready to obtain an upper bound on fault-tolerant $k$-truncated metric dimension in terms of $k$-truncated metric dimension.

\begin{thm}\label{thm:ftdimk_upper}
    For every positive integer $k$ and graph $G$ of order greater than $1$, we have $\ftdim_k(G) \le \dim_k(G)(2+3^{\dim_k(G)-1})$.
\end{thm}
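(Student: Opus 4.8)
The plan is to reuse, essentially verbatim, the argument that gave the earlier bound $\ftdim(G)\le\dim(G)(2+3^{\dim(G)-1})$, substituting ``$k$-resolving set'' for ``resolving set'' throughout and tracking the truncation of distances at $k+1$. Concretely, starting from a $k$-resolving set $S$ for $G$ of minimum size $\dim_k(G)$, I would set $S'=S\cup\bigcup_{s\in S}N(s)$ and, for each $s\in S$, $T_s=S'-\{s\}$. The central claim is that $T_s$ $k$-resolves every pair of vertices of $G$ except possibly the pair consisting of $s$ and exactly one other vertex.

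To prove the claim, first take distinct $u,v\neq s$ and suppose $T_s$ fails to $k$-resolve them. Since $S$ $k$-resolves them and $S-\{s\}\subseteq T_s$, the $s$-coordinate must be the witness, so $\min(k+1,\dist(s,u))\neq\min(k+1,\dist(s,v))$; assume without loss of generality that $\min(k+1,\dist(s,u))<\min(k+1,\dist(s,v))$, which forces $\dist(s,u)\le k$ and in particular $s$ not isolated. Let $u'\in N(s)$ minimize $\dist(\cdot,u)$, so $\dist(u',u)=\dist(s,u)-1\le k-1$, while the triangle inequality gives $\dist(u',v)\ge\dist(s,v)-1>\dist(s,u)-1=\dist(u',u)$. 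Because $\dist(u',u)\le k-1$, capping at $k+1$ preserves the strict inequality $\min(k+1,\dist(u',u))<\min(k+1,\dist(u',v))$, and $u'\in T_s$, contradicting that $T_s$ fails to $k$-resolve $u,v$. For the ``at most one other vertex'' part, if distinct $u,v\neq s$ were both unresolved from $s$ by $T_s$, then $u,v\notin T_s$ and every $x\in T_s$ would satisfy $\min(k+1,\dist(u,x))=\min(k+1,\dist(s,x))=\min(k+1,\dist(v,x))$, so $T_s$ would fail to $k$-resolve $u,v$, contradicting what was just shown.

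Given the claim, I would set $u_s$ to be the offending vertex when one exists and $u_s=s$ otherwise, and put $S''=S'\cup\bigcup_{s\in S}\{u_s\}$. For any $t\in S''$, the set $S''-\{t\}$ is a $k$-resolving set: it contains $S$ when $t\notin S$, and it contains $T_t$ together with $u_t$ when $t\in S$, so in either case it $k$-resolves all pairs. Hence $S''$ is a fault-tolerant $k$-resolving set. Finally, Theorem~\ref{thm:dimkdeg} gives $|N(s)|\le 3^{|S|-1}$ for each $s\in S$, so $|S'|\le|S|(1+3^{|S|-1})$ and therefore $|S''|\le|S'|+|S|\le|S|(2+3^{|S|-1})=\dim_k(G)(2+3^{\dim_k(G)-1})$.

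The main obstacle --- indeed the only place where the argument differs from the non-truncated case --- is the truncation bookkeeping inside the claim: one must verify that the neighbor $u'$ of $s$ still distinguishes $u$ and $v$ once distances are capped at $k+1$. This works precisely because $\min(k+1,\dist(s,u))<\min(k+1,\dist(s,v))$ forces $\dist(s,u)\le k$, so $\dist(u',u)=\dist(s,u)-1\le k-1$ remains strictly below the cap, and $\dist(u',v)\ge\dist(u',u)+1$ then stays strictly larger even after truncation. Every other step is identical to the standard metric dimension case.
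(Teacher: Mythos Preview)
Your proposal is correct and follows essentially the same approach as the paper's proof: both build $S''$ from the closed neighborhood of a minimum $k$-resolving set $S$ plus one extra vertex $u_s$ per $s\in S$, prove the identical central claim about $T_s$, and bound $|S''|$ via Theorem~\ref{thm:dimkdeg}. The only cosmetic difference is that the paper phrases the neighbor argument using both $u'=\argmin_{y\in N(s)}\dist(y,u)$ and $v'=\argmin_{y\in N(s)}\dist(y,v)$, whereas you use only $u'$ together with the triangle inequality; the truncation bookkeeping you flag as the main obstacle is handled the same way in both.
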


\begin{proof}
    Given a $k$-resolving set $S$ for $G$, define \[S' = S \cup \bigcup_{s \in S} N(s).\] For each $s \in S$, let $T_s = S' - \left\{s\right\}$.
    
    We claim that $T_s$ $k$-resolves all pairs of vertices in $G$, except possibly $s$ and one other vertex $u$. First, we show that if $u \neq s$ and $v \neq s$ then $T_s$ $k$-resolves $u, v$. Suppose that $T_s$ does not $k$-resolve them. Given that $S$ $k$-resolves them, without loss of generality assume that $\min(k+1,\dist(s,u))<\min(k+1,\dist(s,v))$. So, we have $\dist(s,u) < k+1$ and $\dist(s,u) < \dist(s,v)$.

Clearly $s$ is not isolated. Since for any vertex $x\ne s$, \[\dist(s, x) = 1+\min_{x' \in N(s)} \dist(x', x),\]
    assuming that $\argmin_{y\in N(s)}\dist(y,u)=u'$  and $\argmin_{y\in N(s)}\dist(y,v)=v'$ we have $\dist(u',u) < k$ and
     \[\min(k+1, \dist(u', u)) = \dist(u', u) < \min(k+1,\dist(v',v))\le \min(k+1,\dist(u',v)).\] Thus, $T_s$ $k$-resolves $u$ and $v$. Next, we show that there do not exist distinct vertices $u,v\ne s$ such that $T_s$ does not $k$-resolve $s,u$ and $s,v$. If there exist such vertices $u,v$, then $u,v\notin T_s$ and for any vertex $x\in T_s$ we have $\min(k+1,\dist(u,x))=\min(k+1,\dist(s,x))=\min(k+1,\dist(v,x))$, i.e., $T_s$ does not $k$-resolve $u,v$. This contradicts our previous claim.
    
    If $T_s$ does not $k$-resolve $s$ from some other vertex $u$, then define $u_s = u$ and otherwise we define $u_s = s$. Let \[S'' = S' \cup \bigcup_{s \in S} \left\{u_s\right\}.\]
    
    Clearly, $S''$ is a $k$-resolving set for $G$. Consider an arbitrary $t \in S''$, and let $X = S''-\left\{t\right\}$. If $t \not \in S$, then $X$ clearly $k$-resolves all pairs of vertices in $G$ since $S \subseteq X$. Otherwise, suppose that $t \in S$. Recall that $T_t$ $k$-resolves all pairs of vertices in $G$, except possibly $t$ and $u_t$. However, $u_t \in X$ and $T_t \subseteq X$, so $X$ $k$-resolves all pairs of vertices in $G$. Thus, $S''$ is a fault-tolerant $k$-resolving set for $G$.

    For each $s\in S$, we have $|N(s)|\le 3^{|S|-1}$ by Theorem~\ref{thm:dimkdeg} and thus, $|S''| \le |S|(2+3^{|S|-1})$. 
\end{proof}

In the next result, we use the infinite family of connected graphs $J_1, J_2, J_3, \dots$ which was defined in Section~\ref{sec:ftdim}.

\begin{thm}\label{thm:ftdimk_lower}
    For all integers $k \ge 2$ and $j \ge 1$, we have $\dim_k(J_j) = j$ and $\ftdim_k(J_j) \ge 3^{j-1}-j$.
\end{thm}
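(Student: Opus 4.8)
The plan is to exploit the fact that every distance in $J_j$ which enters the definition of $J_j$ and the proofs of $\dim(J_j)=j$ and of Theorem~\ref{thm:ftdim_lower} is at most $3$, together with the fact that $k+1\ge 3$ whenever $k\ge 2$. Thus capping distances at $k+1$ leaves all of those quantities unchanged, and the metric-dimension arguments about $J_j$ can be re-run with ``resolve'' replaced by ``$k$-resolve'' essentially verbatim.

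First I would prove $\dim_k(J_j)=j$. The inequality $\dim_k(J_j)\ge\dim(J_j)=j$ holds since $\dim_k(G)\ge\dim(G)$ for every graph $G$ and $\dim(J_j)=j$ by Theorem~\ref{thm:ftdim_lower}. For the reverse inequality, recall from the discussion preceding Theorem~\ref{thm:ftdim_lower} that $S=\{s_1,\dots,s_j\}$ resolves $J_j$. I would then verify that $\dist(x,s_i)\le 3$ for every vertex $x$ of $J_j$ and every $i$: it is $0$ if $x=s_i$; it equals one more than the $i^{\text{th}}$ digit of $x$ (hence $1$, $2$, or $3$) if $x$ is a non-center vertex of the copy of $K_{1,3^j-j-1}$; it is $2$ if $x=c$; it is $1$ or $2$ if $x=r_\ell$; and it is $0$ or $2$ if $x=s_\ell$. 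Since $k\ge 2$ gives $\min(k+1,\dist(x,s_i))=\dist(x,s_i)$ in every case, the $k$-truncated distance vector of each vertex with respect to $S$ coincides with its untruncated distance vector, so $S$ is a $k$-resolving set and $\dim_k(J_j)\le j$.

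Next I would prove $\ftdim_k(J_j)\ge 3^{j-1}-j$ by adapting the proof of Theorem~\ref{thm:ftdim_lower} (there the family index is called $k$; here it is $j$, while $k$ is the truncation parameter). Let $T$ be a fault-tolerant $k$-resolving set for $J_j$; then $T-\{s_1\}$ is a $k$-resolving set. As in that proof, partition the $3^j-j-1$ neighbors of the center $c$ into $3^{j-1}$ classes according to all digits other than the first, observe that at most $j+1$ of these classes have fewer than three elements, and let $R$ be the collection of classes of size exactly $3$, so $|R|\ge 3^{j-1}-j-1$. Fix a class in $R$ and let $v,w$ be its members whose labels begin with digit $0$ and $1$, respectively. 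The computations in the proof of Theorem~\ref{thm:ftdim_lower} show that every vertex of $J_j$ other than $v$, $w$, $s_1$ is equidistant from $v$ and from $w$, and each such common distance (namely $1$, $2$, or $3$) is at most $3$; since $k+1\ge 3$, truncating at $k+1$ leaves these distances unchanged, so no vertex other than $v$, $w$, $s_1$ can $k$-resolve the pair $v,w$. Hence $T-\{s_1\}$, which does not contain $s_1$, must contain $v$ or $w$. Summing over the at least $3^{j-1}-j-1$ pairwise-disjoint classes in $R$ yields $|T-\{s_1\}|\ge 3^{j-1}-j-1$, and therefore $|T|\ge 3^{j-1}-j$.

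The only substantive work is the bookkeeping in the middle steps: one must go through each distance used in the definition of $J_j$ and in the proof of Theorem~\ref{thm:ftdim_lower} and confirm it never exceeds $3$, which is exactly what makes the truncation at $k+1\ge 3$ inert. Once that is done, both halves of the statement reduce to the corresponding facts about $\dim$ and $\ftdim$ already established in Section~\ref{sec:ftdim}.
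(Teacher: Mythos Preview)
Your proof is correct and, for the equality $\dim_k(J_j)=j$, matches the paper's argument exactly: use $\dim_k\ge\dim$ for the lower bound and observe that every distance from a vertex of $J_j$ to an element of $S$ is at most $3\le k+1$, so $S$ remains a $k$-resolving set.

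For the inequality $\ftdim_k(J_j)\ge 3^{j-1}-j$, however, the paper takes a much shorter route than yours. Rather than re-running the combinatorial analysis of Theorem~\ref{thm:ftdim_lower} and checking that each distance involved is at most $3$, the paper simply invokes the general inequality $\ftdim_k(G)\ge\ftdim(G)$ (stated in the introduction: any fault-tolerant $k$-resolving set is in particular a fault-tolerant resolving set) and then cites Theorem~\ref{thm:ftdim_lower} directly to get $\ftdim_k(J_j)\ge\ftdim(J_j)\ge 3^{j-1}-j$. Your approach of verifying that truncation at $k+1\ge 3$ is inert on every distance appearing in the proof of Theorem~\ref{thm:ftdim_lower} works fine, but it duplicates effort; once you have observed in the first half that you are willing to use monotonicity in the form $\dim_k\ge\dim$, the analogous monotonicity $\ftdim_k\ge\ftdim$ gives the second half for free.
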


\begin{proof}
    Since $\dim_k(G) \ge \dim(G)$ for all graphs $G$ and positive integers $k$, we have $\dim_k(J_j) \ge \dim(J_j) = j$. Note that all elements of $S$ in $J_j$ have distance at most $3$ to every vertex in $J_j$, so $S$ is a $k$-resolving set for $J_j$, and $\dim_k(J_j) \le j$. Thus, $\dim(J_j) = j$. 

    By Theorem~\ref{thm:ftdim_lower}, we have $\ftdim_k(J_j) \ge \ftdim(J_j) \ge 3^{j-1}-j$.
\end{proof}

\begin{cor}
    For all integers $k \ge 2$, we have \[\lim_{j \rightarrow \infty} \left( \max_{G: \text{ } \dim_k(G) = j} \frac{\log_3(\ftdim_k(G))}{j} \right) = 1.\] However, \[\lim_{j \rightarrow \infty} \left( \max_{G: \text{ } \dim_1(G) = j} \frac{\log_2(\ftdim_1(G))}{j} \right) = 1.\]
\end{cor}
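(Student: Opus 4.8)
The plan is to sandwich the quantity inside each limit between two expressions that both tend to $1$, and then invoke the squeeze theorem. For $k \ge 2$, the upper bound is immediate from Theorem~\ref{thm:ftdimk_upper}: any graph $G$ with $\dim_k(G) = j$ satisfies $\ftdim_k(G) \le j(2 + 3^{j-1})$, hence $\frac{\log_3(\ftdim_k(G))}{j} \le \frac{\log_3 j + \log_3(2 + 3^{j-1})}{j}$, and the right-hand side converges to $1$ as $j \to \infty$ since $\log_3(2 + 3^{j-1}) = (j-1) + o(1)$ while $\frac{\log_3 j}{j} \to 0$. (In particular the supremum defining the maximum is finite, so it is genuinely attained or at least a well-defined real number.) For the matching lower bound I would use the family $J_1, J_2, \dots$ from Section~\ref{sec:ftdim}: by Theorem~\ref{thm:ftdimk_lower} we have $\dim_k(J_j) = j$ and $\ftdim_k(J_j) \ge 3^{j-1} - j$, so the maximum over all $G$ with $\dim_k(G) = j$ is at least $3^{j-1} - j$, whence $\frac{\log_3(\max)}{j} \ge \frac{\log_3(3^{j-1} - j)}{j} \to 1$, again because $\log_3(3^{j-1} - j) = (j-1) + o(1)$. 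The squeeze theorem then yields the first limit.

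For the case $k = 1$, recall that $\dim_1(G) = \adim(G)$ and $\ftdim_1(G) = \ftadim(G)$. The upper bound comes from Theorem~\ref{thm:ftadim_upper}: if $\adim(G) = j$, then $\ftadim(G) \le j + 2^j$, so $\frac{\log_2(\ftadim(G))}{j} \le \frac{\log_2(j + 2^j)}{j} \to 1$. The lower bound uses the family $A_1, A_2, \dots$ from Section~\ref{sec:adim}: we have $\adim(A_j) = j$ and $\ftadim(A_j) \ge 2^{j-1}$, so $\frac{\log_2(\max)}{j} \ge \frac{\log_2(2^{j-1})}{j} = \frac{j-1}{j} \to 1$. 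Applying the squeeze theorem once more gives the second limit.

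There is no substantial obstacle here; the only care needed is in the elementary asymptotic estimates for the logarithms (confirming that the additive errors $\log_3 j$ and $\log_2 j$, as well as the lower-order terms subtracted inside the logarithms, all contribute $o(j)$), and in observing that each witness family $\{J_j\}$ and $\{A_j\}$ has its dimension exactly equal to $j$, so it is a legitimate competitor in the corresponding maximum. As with the earlier corollaries, I would also note that restricting the maxima to connected graphs does not change the limits, since both $\{J_j\}$ and $\{A_j\}$ consist of connected graphs.
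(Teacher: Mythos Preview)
Your proposal is correct and follows essentially the same approach as the paper: the paper's own proof simply states that the first limit follows from the bounds in Section~\ref{sec:ftdimk} (namely Theorems~\ref{thm:ftdimk_upper} and~\ref{thm:ftdimk_lower}) and that the second limit follows from Corollary~\ref{cor:adim}, which in turn rests on exactly the bounds from Theorem~\ref{thm:ftadim_upper} and the family $A_j$ that you invoke. You have merely spelled out the elementary squeeze argument that the paper leaves implicit.
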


\begin{proof}
    The first limit follows from the bounds in this section, while the second limit follows from Corollary~\ref{cor:adim}.
\end{proof}

As with the other variants, if we restrict the maximum in the corollary so that it only ranges over connected graphs $G$ with $\dim_k(G) = j$, rather than over all graphs $G$ with $\dim_k(G) = j$, then we still obtain the same limit because the infinite family of graphs $J_1,J_2,\dots$ used in the proof of Theorem~\ref{thm:ftdimk_lower} are connected.

\section{Fault tolerance for local metric dimension}\label{sec:ftldim}

If $G$ has no edges, then clearly we have $\ldim(G) = 0$. If $G$ has any edge $\left\{u,v\right\}$, then clearly $\ldim(G) > 0$, or else $u$ and $v$ could not be resolved. We claim that $\ldim(G) = 1$ if and only if $G$ is a union of a connected bipartite graph with at least one edge and any number of singletons. This is already known for the case of connected graphs \cite{okamoto}; we include a proof of the general case here for completeness.

\begin{thm}\label{thm:ldim_1}
    For all graphs $G$, $\ldim(G) = 1$ if and only if $G$ is a union of a connected bipartite graph with at least one edge and any number of singletons.
\end{thm}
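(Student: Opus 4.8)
The plan is to reduce both directions to one elementary observation: in any graph the two endpoints of an edge have distances from a fixed vertex that differ by at most $1$ (a shortest path to one endpoint extends to the other), so if a one‑element set $S=\{s\}$ locally resolves $G$ then on every edge these distances differ by \emph{exactly} $1$; and conversely, in a connected bipartite graph the two colour classes coincide with the sets of vertices at even, respectively odd, distance from any fixed vertex (a shortest path alternates sides). Everything else is bookkeeping about connected components.

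For the backward direction, suppose $G$ is the disjoint union of a connected bipartite graph $B$ having at least one edge together with any number of singletons. Since $B$ has an edge, $\ldim(G)\ge 1$. Fix any $s\in V(B)$ and let $S=\{s\}$; I claim $S$ locally resolves $G$. Every edge $\{u,v\}$ of $G$ lies in $B$, as singleton components carry no edges. Writing $(X,Y)$ for the (unique) bipartition of $B$ with $s\in X$, the alternating‑path argument gives $w\in X$ iff $\dist(w,s)$ is even, for every $w\in V(B)$. The endpoints $u,v$ lie on opposite sides of $(X,Y)$, so $\dist(u,s)\not\equiv\dist(v,s)\pmod 2$; in particular $d_S(u)\ne d_S(v)$. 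Hence $\ldim(G)\le 1$, so $\ldim(G)=1$.

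For the forward direction, suppose $\ldim(G)=1$ witnessed by $S=\{s\}$. First $G$ has an edge, else $\ldim(G)=0$. Next, every edge of $G$ lies in the component $C$ of $s$: if $\{u,v\}$ were an edge in another component then $\dist(u,s)=\dist(v,s)=\infty$, so $d_S(u)=d_S(v)$, contradicting that $S$ locally resolves $G$; in particular $C$ contains an edge. Finally, partitioning $V(C)$ by the parity of distance to $s$, any edge $\{u,v\}$ of $C$ has $|\dist(u,s)-\dist(v,s)|\le 1$ and $d_S(u)\ne d_S(v)$, forcing $|\dist(u,s)-\dist(v,s)|=1$, so $u$ and $v$ fall in different parts; thus $C$ is bipartite. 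Every component other than $C$ is edgeless, hence a singleton, so $G$ is a connected bipartite graph with at least one edge (namely $C$) plus some singletons, as required. There is no real obstacle here beyond the parity observation; one only needs to be mildly careful that $C$ is genuinely the component carrying all edges, and that ``connected bipartite graph with at least one edge'' excludes the degenerate $K_1$ while admitting $K_2$.
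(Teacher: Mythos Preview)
Your proof is correct and follows essentially the same approach as the paper: both directions rely on the parity-of-distance partition relative to the single landmark, with the backward direction noting that adjacent vertices in a connected bipartite graph lie at distances of opposite parity from any fixed vertex, and the forward direction observing that all edges must lie in the landmark's component (else both endpoints are at distance $\infty$) and that this component is bipartite via the even/odd distance classes.
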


\begin{proof}
    If $G$ is a union of a connected bipartite graph $H$ with at least one edge and any number of singletons, then we can focus just on $H$ since none of the singletons have to be distinguished from any other vertices. If we select any vertex $v$ in $H$, then it is a local resolving set for $G$. Indeed, for any edge $\left\{u,w\right\}$ in $H$, the distances of $u$ and $w$ to $v$ must have different parities.

    For the other direction, suppose that $\ldim(G) = 1$. Then, $G$ must have some edge, or else $\ldim(G) = 0$. Furthermore, $G$ must have at most one connected component $H$ with an edge, or else $\ldim(G) > 1$. Now, we claim that $H$ must be bipartite. Indeed, since $\ldim(G) = 1$, there must exist some vertex $v$ which distinguishes all pairs of adjacent vertices in $G$. We define two sets $L$ and $R$. In particular, for each vertex $u$ in $H$, we place $u$ in $L$ if it has even distance to $v$, and otherwise we place $u$ in $R$. If there are two vertices $u$ and $w$ in $H$ with the same distance to $v$, then they are not adjacent, since $v$ does not distinguish them. If $u$ and $w$ have different distances to $v$ of the same parity, then they are not adjacent, since otherwise their distances to $v$ would be at most $1$ apart. So, no pair of vertices in $L$ are adjacent, and no pair of vertices in $R$ are adjacent. Thus, $H$ is bipartite. 
\end{proof}

As for the maximum possible values of $\ldim(G)$, it is easy to see that $\ldim(G) \le n-1$ for all graphs $G$ of order $n$, since any set of $n-1$ vertices in $G$ is clearly a local resolving set. Moreover, we have $\ldim(G) \le n-2$ for all graphs $G$ of order $n$ that are not complete, since any set of $n-2$ vertices in $G$ is a local resolving set if the two vertices that are not in the set are not adjacent. 

\begin{remark}
    Among graphs $G$ of order $n$, we have that $\ldim(G) = n-1$ if and only if $G$ is a complete graph \cite{okamoto}. Indeed, if $G$ is a complete graph of order $n$, then $\ldim(G) \ge n-1$ since any set of vertices in $G$ of size $n-2$ would not distinguish the two vertices that are not in the set. Thus, $\ldim(G) = n-1$ if $G$ is a complete graph of order $n$. In the other direction, if $G$ is a graph of order $n$ with $\ldim(G) = n-1$, then all pairs of vertices in $G$ must be adjacent, so $G$ is complete. This result was already proved for connected graphs \cite{okamoto}, but the general proof is the same.
\end{remark} 

Now, we turn to fault tolerance for local metric dimension. If $G$ is a graph with no edges, then we have $\ftldim(G) = 1$, since there are no adjacent pairs of vertices. Observe that this situation is very different from fault tolerance for standard metric dimension, since we have $\ftdim(G) = n$ for graphs $G$ of order $n$ with no edges. If $G$ has at least one edge, then $\ldim(G) \ge 1$, so $\ftldim(G) \ge 2$. The following characterization implies that this bound is sharp.

\begin{thm}
    For any graph $G$, $\ftldim(G) = 2$ if and only if $G$ is a union of a connected bipartite graph with at least one edge and any number of singletons. 
\end{thm}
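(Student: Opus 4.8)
The plan is to mirror the pattern used for the earlier $\ftxdim(G) = 2$ characterizations in the paper, leaning on Lemma~\ref{lem:ftxdim} and Theorem~\ref{thm:ldim_1}. For the forward direction I would argue as follows. Suppose $\ftldim(G) = 2$. Then $G$ must have at least one edge, since a graph with no edges has $\ftldim(G) = 1$ (as noted just before the statement). By Lemma~\ref{lem:ftxdim} we have $\ldim(G) \le \ftldim(G) - 1 = 1$, and since $G$ has an edge we also have $\ldim(G) \ge 1$, hence $\ldim(G) = 1$. Applying Theorem~\ref{thm:ldim_1} then gives that $G$ is a union of a connected bipartite graph with at least one edge and any number of singletons.

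For the backward direction I would exhibit an explicit fault-tolerant local resolving set of size $2$. Write $G$ as the union of a connected bipartite graph $H$ with at least one edge together with some singletons. Since $H$ has an edge, it has at least two vertices; pick any two distinct vertices $u, v \in V(H)$ and set $S = \{u, v\}$. The key point, already established inside the proof of Theorem~\ref{thm:ldim_1}, is that any single vertex of $H$ is a local resolving set for all of $G$: for any edge $\{x, y\}$ of $G$ (necessarily lying in $H$), the distances of $x$ and $y$ to a fixed vertex of $H$ differ in parity, and the singletons impose no constraints. Hence both $S - \{u\} = \{v\}$ and $S - \{v\} = \{u\}$ locally resolve $G$, so $S$ is a fault-tolerant local resolving set and $\ftldim(G) \le 2$. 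Combined with $\ftldim(G) \ge \ldim(G) + 1 \ge 2$ (using that $G$ has an edge), this yields $\ftldim(G) = 2$.

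I do not anticipate a genuine obstacle here; the argument is essentially a two-line reduction in each direction. The only point that requires a moment's care is the edge case of a graph with no edges, where $\ftldim(G) = 1 \neq 2$ — this is why the hypothesis "with at least one edge" is essential and why I invoke it explicitly when asserting $\ftldim(G) \geq 2$. A second minor check is that the bipartite component indeed has at least two vertices so that a set of size exactly $2$ can be chosen inside it; this is immediate from its having an edge. Everything else is a direct appeal to Lemma~\ref{lem:ftxdim} and to the parity observation from the proof of Theorem~\ref{thm:ldim_1}.
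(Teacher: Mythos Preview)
Your proposal is correct and follows essentially the same approach as the paper: both directions lean on Lemma~\ref{lem:ftxdim} and Theorem~\ref{thm:ldim_1}, and for the backward direction you choose any two vertices of the bipartite component $H$ and use that each one alone is already a local resolving set. The paper's proof is organized identically, including the observations that $G$ must have an edge (so $\ftldim(G)\ge 2$) and that $H$ has at least two vertices.
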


\begin{proof}
    First, suppose that $G$ is a union of a connected bipartite graph $H$ with at least one edge and any number of singletons. Then, $H$ has at least two vertices and at least one edge. Let $S$ be any set of two vertices from $H$. Any vertex in $H$ is a local resolving set for $G$, so $S$ is a fault-tolerant local resolving set for $G$. Thus, $\ftldim(G) \le 2$. Since $G$ has at least one edge, we also have $\ftldim(G) \ge 2$, so $\ftldim(G) = 2$. 
    
    For the other direction, suppose that we have an arbitrary graph $G$ with $\ftldim(G) = 2$. Since $\ftldim(G) > 1$, $G$ has at least one edge. By Lemma~\ref{lem:ftxdim}, $\ldim(G) \le \ftldim(G)-1 = 1$. Thus, by Theorem~\ref{thm:ldim_1}, $G$ must be the union of a connected bipartite graph with at least one edge and any number of singletons. 
\end{proof}

\begin{cor}
    For all graphs $G$, we have $\ftldim(G) = 2$ if and only if $\ldim(G) = 1$.
\end{cor}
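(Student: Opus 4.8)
The plan is to deduce this directly from the preceding theorem together with Theorem~\ref{thm:ldim_1}. The preceding theorem characterizes the graphs with $\ftldim(G) = 2$ as exactly the unions of a connected bipartite graph with at least one edge and any number of singletons, and Theorem~\ref{thm:ldim_1} characterizes the graphs with $\ldim(G) = 1$ as exactly the same family. Since both conditions are equivalent to membership in the same class of graphs, they are equivalent to each other, which is precisely the claim; so the entire proof is one sentence invoking those two results.

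Alternatively, I would give a short self-contained argument using Lemma~\ref{lem:ftxdim}. For the forward direction, suppose $\ftldim(G) = 2$. As observed just before the preceding theorem, an edgeless graph has $\ftldim(G) = 1$, so $G$ must have at least one edge, whence $\ldim(G) \ge 1$; and Lemma~\ref{lem:ftxdim} applied to the variant $\ldim$ gives $\ldim(G) \le \ftldim(G) - 1 = 1$, so $\ldim(G) = 1$. For the backward direction, suppose $\ldim(G) = 1$. Then $G$ has an edge, so $\ftldim(G) \ge \ldim(G) + 1 = 2$ by Lemma~\ref{lem:ftxdim}; and by Theorem~\ref{thm:ldim_1}, $G$ is the union of a connected bipartite graph $H$ with at least one edge and some singletons, so, exactly as in the proof of that theorem, any single vertex of $H$ is a local resolving set for $G$. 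Choosing any two vertices of $H$ therefore yields a fault-tolerant local resolving set of size $2$, so $\ftldim(G) \le 2$, and hence $\ftldim(G) = 2$.

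There is essentially no obstacle here: the result is an immediate consequence of the two structural characterizations, and it mirrors the analogous corollaries already proved for $\dim$, $\edim$, $\adim$, and $\dim_k$. The only small points to be careful about are the edgeless case (handled by the earlier observation that $\ftldim(G)=1$ when $G$ has no edges, so that $\ftldim(G)=2$ really does force an edge and hence $\ldim(G)\ge 1$) and, in the direct argument, the fact --- already established inside the proof of Theorem~\ref{thm:ldim_1} --- that a single vertex of a connected bipartite graph is a local resolving set, which is what lets two such vertices serve as a fault-tolerant local resolving set.
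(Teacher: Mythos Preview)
Your proposal is correct and matches the paper's approach exactly: the corollary is stated without proof in the paper, as it follows immediately from the preceding theorem and Theorem~\ref{thm:ldim_1}, both of which characterize the same class of graphs. Your alternative self-contained argument via Lemma~\ref{lem:ftxdim} is also valid and mirrors how the analogous corollaries for the other variants are obtained.
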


As for the maximum possible value of $\ftldim(G)$ with respect to the order of $G$, it is easy to see that $\ftldim(G) \le n$ for all graphs $G$ of order $n$, since any set of $n-1$ vertices in $G$ is a local resolving set. The next result shows that this upper bound is sharp.

\begin{thm}
    For all graphs $G$ of order $n$, we have $\ftldim(G) < n$ if and only if there exists $v \in V(G)$ such that for all $u \in V(G)$ with $\left\{u, v\right\} \in E(G)$ we have $N(u)-\left\{v\right\} \neq N(v)-\left\{u\right\}$.
\end{thm}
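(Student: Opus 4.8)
The plan is to adapt the proofs of Theorem~\ref{thm:ftdim<n} and its edge-metric analogue to the local setting, exploiting the fact that a local resolving set only needs to separate pairs of \emph{adjacent} vertices. Throughout I would use repeatedly that if $v$ and $u$ are adjacent then every vertex $w$ has $|\dist(w,v)-\dist(w,u)|\le 1$, and that $v$ (resp.\ $u$) separates the edge $\{v,u\}$ since it has distance $0$ to one endpoint and $1$ to the other.

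\emph{The ``if'' direction.} Suppose $v$ is a vertex with $N(u)-\{v\}\ne N(v)-\{u\}$ for every neighbour $u$ of $v$, and set $S=V(G)-\{v\}$. I would show $S$ is a fault-tolerant local resolving set by checking, for each $s\in S$, that $S-\{s\}=V(G)-\{v,s\}$ locally resolves $G$. Let $\{a,b\}$ be any edge. If one of $a,b$ — say $a$ — lies outside $\{v,s\}$, then $a\in S-\{s\}$ and $\dist(a,a)=0\ne 1=\dist(a,b)$, so the edge is separated. The only remaining possibility is $\{a,b\}=\{v,s\}$, which forces $v$ and $s$ to be adjacent; then the hypothesis $N(s)-\{v\}\ne N(v)-\{s\}$ produces a vertex $w$ in the symmetric difference of these two sets, and one checks $w\notin\{v,s\}$, so $w\in S-\{s\}$ has distance $1$ to one of $v,s$ and distance at least $2$ to the other, separating the edge. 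Hence $\ftldim(G)\le n-1<n$. (When $v$ is isolated the hypothesis is vacuous, the case $\{a,b\}=\{v,s\}$ never occurs, and the argument still applies.)

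\emph{The ``only if'' direction.} First I would record that any superset of a fault-tolerant local resolving set is again one: a superset of a local resolving set is a local resolving set, so if $S$ is fault-tolerant and $S'\supseteq S$, then for each $s'\in S'$ the set $S'-\{s'\}$ contains some $S-\{t\}$ and hence locally resolves $G$. Thus $\ftldim(G)<n$ yields a fault-tolerant local resolving set of the form $V(G)-\{v\}$. Now for any neighbour $u$ of $v$, the set $V(G)-\{v,u\}$ is a local resolving set and so separates the edge $\{v,u\}$, i.e.\ there is $w\notin\{v,u\}$ with $\dist(w,v)\ne\dist(w,u)$. The crucial claim is that $N(u)-\{v\}=N(v)-\{u\}$ would make this impossible. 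To prove it, assume the equality and suppose for contradiction $\dist(w,u)<\dist(w,v)$ for some $w\notin\{v,u\}$; inspect a shortest $w$–$u$ path, whose penultimate vertex is either $v$ (forcing $\dist(w,v)\le\dist(w,u)-1$) or a vertex of $N(u)-\{v\}=N(v)-\{u\}\subseteq N(v)$ (forcing $\dist(w,v)\le\dist(w,u)$), and both contradict $\dist(w,u)<\dist(w,v)$; by symmetry $\dist(w,u)=\dist(w,v)$ for every $w\notin\{v,u\}$. Therefore $N(u)-\{v\}\ne N(v)-\{u\}$, as needed.

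I expect the main obstacle to be precisely that shortest-path claim — that $N(u)-\{v\}=N(v)-\{u\}$ forces $u$ and $v$ to have equal distance to every other vertex — together with the small but necessary bookkeeping in the ``if'' direction that the witness $w$ drawn from the symmetric difference of $N(v)$ and $N(s)$ is genuinely distinct from both $v$ and $s$. The case analysis on how an edge can meet $\{v,s\}$, and the superset remark, are routine.
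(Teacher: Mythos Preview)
Your proof is correct and follows essentially the same approach as the paper: show that $V(G)-\{v\}$ is a fault-tolerant local resolving set in the ``if'' direction, and in the ``only if'' direction extract such a $v$ and use that $V(G)-\{u,v\}$ must separate the adjacent pair $u,v$. You are simply more explicit than the paper on two points it treats as obvious: the superset-closure of fault-tolerant local resolving sets (needed to pass from an arbitrary witness of size $<n$ to one of the form $V(G)-\{v\}$), and the ``twin'' fact that $N(u)-\{v\}=N(v)-\{u\}$ forces $\dist(w,u)=\dist(w,v)$ for all $w\notin\{u,v\}$, which the paper asserts without the shortest-path justification you supply.
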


\begin{proof}
Suppose that there exists a vertex $v \in V(G)$ such that for all $u \in V(G)$ with $\left\{u, v\right\} \in E(G)$ we have $N(u)-\left\{v\right\} \neq N(v)-\left\{u\right\}$. Let $S = V(G)-\left\{v\right\}$. Then for all $s \in S$ with $\left\{s,v\right\} \in E(G)$, the set $S-\left\{s\right\}$ distinguishes $v$ from $s$, since $N(v)-\left\{s\right\} \neq N(s)-\left\{v\right\}$. Thus, $\ftldim(G) < n$.

For the other direction, suppose that $\ftldim(G) < n$. Then there exists a vertex $v \in V(G)$ such that $V(G)-\left\{v\right\}$ is a fault-tolerant local resolving set. Then for every vertex $u \in V(G)-\left\{v\right\}$, the set $V(G)-\left\{u,v\right\}$ is a local resolving set. In particular, $V(G)-\left\{u,v\right\}$ resolves $u$ and $v$ if $u$ and $v$ are adjacent. So, if $u$ and $v$ are adjacent, then there must exist some vertex $w \in V(G)-\left\{u,v\right\}$ such that $w$ is adjacent to $u$ or $v$ but not both, and thus, $N(u)-\left\{v\right\} \neq N(v)-\left\{u\right\}$.
\end{proof}

\begin{cor}\label{cor:ftldim_n}
    For all graphs $G$ of order $n$, we have $\ftldim(G) = n$ if and only if for all $v \in V(G)$ there exists $u \in V(G)$ with $\left\{u, v\right\} \in E(G)$ such that $N(u)-\left\{v\right\} = N(v)-\left\{u\right\}$.
\end{cor}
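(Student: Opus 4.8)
The plan is to obtain this corollary immediately from the theorem that precedes it, by taking the contrapositive. First I would recall the trivial upper bound $\ftldim(G) \le n$ for every graph $G$ of order $n$ (any set of $n-1$ vertices is a local resolving set, since the two omitted vertices need not be distinguished from anything if one simply omits a suitable pair — but more robustly, omitting any single vertex $v$ leaves $n-1$ vertices which still locally resolve $G$ because no adjacent pair lies inside the omitted set). Consequently $\ftldim(G) = n$ holds precisely when $\ftldim(G) < n$ fails.

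Next I would invoke the preceding theorem, which states that $\ftldim(G) < n$ if and only if there exists $v \in V(G)$ such that for every $u \in V(G)$ with $\{u,v\} \in E(G)$ we have $N(u) - \{v\} \neq N(v) - \{u\}$. Negating this characterization — pushing the negation through the two quantifiers — gives: for every $v \in V(G)$ there exists $u \in V(G)$ with $\{u,v\} \in E(G)$ such that $N(u) - \{v\} = N(v) - \{u\}$. This is exactly the condition in the statement of the corollary, so chaining the two equivalences ($\ftldim(G) = n \iff \neg(\ftldim(G) < n) \iff$ the displayed condition) finishes the argument.

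There is essentially no obstacle here; this is a routine contrapositive. The only point requiring a moment of care is the logical bookkeeping in negating the nested quantifiers: the adjacency requirement $\{u,v\} \in E(G)$ is a restriction on the range of $u$, so under negation it remains a restriction on the range (the inner universal over neighbors becomes an existential over neighbors) rather than becoming the hypothesis of an implication. Once this is handled correctly, the corollary follows directly from the theorem.
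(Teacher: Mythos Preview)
Your proposal is correct and matches the paper's approach: the corollary is stated in the paper without proof, as the immediate contrapositive of the preceding theorem combined with the trivial bound $\ftldim(G)\le n$. Your handling of the quantifier negation is accurate; the only quibble is that your parenthetical justification of $\ftldim(G)\le n$ is slightly muddled (the clean reason is that $V(G)$ itself is a fault-tolerant local resolving set, since any $(n-1)$-subset contains at least one endpoint of every edge), but the conclusion stands.
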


By Corollary~\ref{cor:ftldim_n}, it is clear that we have $\ftldim(K_n) = n$ for all positive integers $n$. If $G$ is a $1$-regular, i.e., if $G$ is a union of disconnected edges, then we also have $\ftldim(G) = n$ by the same corollary. For another example, consider the graph $G$ with vertices $v_i, u_i$ for every $1 \le i \le k$. Suppose that $v_i$ is adjacent to $u_i$ for every $1 \le i \le k$, and both $u_i$ and $v_i$ are adjacent to $v_{i+1}$ and $u_{i+1}$ for every $1 \le i \le k-1$. Then, we also have $\ftdim(G) = n$ by Corollary~\ref{cor:ftldim_n}.

We have discussed the characterization of the graphs $G$ of order $n$ with $\ldim(G) = 1$ and $\ldim(G) = n-1$, and we also characterized the graphs $G$ with $\ftldim(G) = 2$ and $\ftldim(G) = n$. A remaining problem is to complete the characterization for other values of $\ldim(G)$ and $\ftldim(G)$.

We showed for all graphs $G$ that $\ldim(G) = 1$ if and only if $\ftldim(G) = 2$. Moreover, clearly we have $\ldim(G) = 0$ if and only if $\ftldim(G) = 1$. Thus, there is a finite upper bound on $\ftldim(G)$ for any graph $G$ with $\ldim(G) \le 1$. However, based only on known results, it is plausible that in general, $\ftldim(G)$ might not have a finite upper bound in terms of $\ldim(G)$. In particular, does there exist a family of graphs $G$ with $\ldim(G) = 2$ such that $\ftldim(G)$ is unbounded on the family?

In all of our bounds for variants of metric dimension $\xdim(G)$, the maximum possible value of $\ftxdim(G)$ with respect to $\xdim(G)$ grows nearly on the same order as the maximum possible value of $\Delta(G)$ with respect to $\xdim(G)$. However, there is no upper bound on $\Delta(G)$ for graphs $G$ with $\ldim(G) = k$. In particular, for any positive integers $k$ and $r$, it is easy to construct connected graphs $G$ with $\ldim(G) = k$ and $\Delta(G) \ge r$. Indeed, we can start with any connected graph $G$ that has $\ldim(G) = k$ and apply the following lemma.

\begin{lem}
    Suppose that $G$ is connected and $\ldim(G) = k \ge 1$. Pick a vertex $v$ in $G$, and then add $r$ new vertices that are neighbors only with $v$. The resulting graph $G'$ has $\ldim(G') = k$ and $\Delta(G') \ge r$.
\end{lem}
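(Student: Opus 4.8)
The plan is to prove the two assertions $\Delta(G') \ge r$ and $\ldim(G') = k$ separately, with the second split into the easy inequality $\ldim(G') \le k$ and the harder inequality $\ldim(G') \ge k$. The bound $\Delta(G') \ge r$ is immediate, since the chosen vertex $v$ acquires $r$ new neighbors in $G'$ and hence has degree at least $r$. The structural fact used throughout is that the $r$ added vertices $w_1, \dots, w_r$ are leaves whose only neighbor is $v$; consequently, for any $x, y \in V(G)$ we have $\dist_{G'}(x,y) = \dist_G(x,y)$ (no shortest path between vertices of $G$ can pass through a degree-one vertex), and for any $x \in V(G)$ and any leaf $w_i$ we have $\dist_{G'}(x, w_i) = \dist_G(x,v) + 1$ (every path from $x$ to $w_i$ must reach $w_i$ via $v$). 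Since $G$ is connected, all these distances are finite.

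For $\ldim(G') \le k$, I would take a local resolving set $S$ of $G$ with $|S| = k$ and verify that $S$ remains a local resolving set of $G'$. The edges to be resolved in $G'$ are those of $G$ together with the new edges $\{v, w_i\}$. Every edge $\{x,y\}$ of $G$ is still resolved because distances within $V(G)$ are unchanged, so $d_S^{G'}(x) = d_S^{G}(x) \ne d_S^{G}(y) = d_S^{G'}(y)$. For an edge $\{v, w_i\}$, the distance identity gives $d_S(w_i) = d_S(v) + \mathbf{1}$, which differs from $d_S(v)$ in every coordinate, and $S$ is nonempty since $k \ge 1$; hence $v$ and $w_i$ are resolved. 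No other adjacent pair involves a leaf, so $S$ locally resolves $G'$.

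For $\ldim(G') \ge k$, I would start from an arbitrary local resolving set $S'$ of $G'$ and build from it a local resolving set of $G$ of size at most $|S'|$. Define $S = (S' \cap V(G)) \cup \{v : S' \text{ contains some leaf}\}$, so $|S| \le |S'|$. Given an edge $\{x,y\}$ of $G$, choose $s \in S'$ with $\dist_{G'}(x,s) \ne \dist_{G'}(y,s)$. If $s \in V(G)$, then $s \in S$ and $\dist_G(x,s) \ne \dist_G(y,s)$, so $s$ resolves $\{x,y\}$ in $G$. If $s = w_i$ is a leaf, then $\dist_{G'}(x,w_i) = \dist_G(x,v)+1$ and $\dist_{G'}(y,w_i) = \dist_G(y,v)+1$, so $\dist_G(x,v) \ne \dist_G(y,v)$, and $v \in S$ by construction, so $v$ resolves $\{x,y\}$ in $G$. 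Thus $\ldim(G) \le |S| \le |S'|$; taking $S'$ of minimum size gives $\ldim(G') \ge \ldim(G) = k$, and combined with the previous paragraph we conclude $\ldim(G') = k$.

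The main obstacle is the lower bound $\ldim(G') \ge k$, and specifically the case in which a minimum local resolving set of $G'$ uses one or more of the new leaves as landmarks: the key point is that replacing each such leaf by $v$ preserves the ability to distinguish adjacent pairs of $G$, thanks to the identity $\dist_{G'}(x,w_i) = \dist_G(x,v)+1$. Everything else reduces to routine checking against the two distance identities noted at the outset.
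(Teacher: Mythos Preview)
Your proposal is correct and follows essentially the same approach as the paper: both arguments show $\ldim(G')\le k$ by checking that a minimum local resolving set of $G$ still works in $G'$, and obtain $\ldim(G')\ge k$ by replacing any new leaves in a local resolving set of $G'$ by $v$ to produce a local resolving set of $G$ of no larger size. The only cosmetic difference is that the paper phrases the lower bound as a contradiction, whereas you give the direct inequality $\ldim(G)\le |S|\le |S'|$; your explicit distance identities make the argument a bit more transparent.
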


\begin{proof}
    The fact that $\Delta(G') \ge r$ follows from definition. To see that $\ldim(G') = k$, we prove that $\ldim(G') \le k$ and $\ldim(G') \ge k$. To prove that $\ldim(G') \le k$, it suffices to observe that any local resolving set for $G$ is also a local resolving set for $G'$. Indeed, note that every vertex in $G$ can resolve all of the new pairs of adjacent vertices in $G'$.
    
    Now, we prove that $\ldim(G') \ge k$. Suppose for contradiction that $G'$ has a local resolving set $S$ with $|S| < k$. Then $S$ must contain some new vertex in $G'$, since otherwise $S$ would be a local resolving set for $G$, which would violate the fact that $\ldim(G) = k$. Let $S'$ be obtained from $S$ by removing any new vertices and adding $v$. Then $S'$ is also a local resolving set for $G'$, and $|S'| \le |S| < k$. Since $S'$ only has vertices from $G$, $S'$ is also a local resolving set for $G$, but $|S'| < k$ and $\ldim(G) = k$. This is a contradiction, so $G'$ does not have a local resolving set $S$ with $|S| < k$. Thus, $\ldim(G') \ge k$.
\end{proof}

\section{More extremal results for standard metric dimension}\label{sec:dim_extremal}

Geneson et al \cite{gkl} proved the following lemma, which they used to obtain several exact extremal results about metric dimension.

\begin{lem}\label{dk_lem}\cite{gkl}
   For every graph $H$ and positive integer $k$, there exists a graph of metric dimension $k$ which contains $H$ if and only if $D_k$ contains $H$.  
\end{lem}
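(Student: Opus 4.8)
The plan is to move freely between a graph $G$ of metric dimension $k$ and the grid $D_k$ by identifying each vertex of $G$ with its distance vector: since $V(D_k)=\mathbb{Z}_{\ge 0}^k$ and two points are $D_k$-adjacent exactly when they differ by at most $1$ in every coordinate, a distance-vector map is precisely the kind of injective edge-preserving map that realizes a subgraph inside $D_k$.

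For the ``only if'' direction, suppose $G$ is connected with $\dim(G)=k$ and $H\subseteq G$. Fix a resolving set $S=\{v_1,\dots,v_k\}$ and consider $\phi\colon V(G)\to\mathbb{Z}_{\ge 0}^k$ given by $\phi(u)=d_S(u)$. Because $S$ resolves $G$, the map $\phi$ is injective; because $\{u,w\}\in E(G)$ forces $|\dist(u,v_i)-\dist(w,v_i)|\le 1$ for every $i$ by the triangle inequality, $\phi$ sends edges to edges. Hence $\phi|_{V(H)}$ realizes $H$ as a subgraph of $D_k$. (If $G$ is disconnected, run this argument on each component $C$ meeting $H$ using the landmarks of $S$ lying in $C$, which already resolve $C$; each component embeds into $D_k$, and since $D_k$ is unbounded one may translate the images to be pairwise disjoint, which suffices because $H$ has no edges between distinct components of $G$.)

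For the ``if'' direction, suppose $H$ is a finite subgraph of $D_k$. Its vertices occupy only finitely many coordinates, so for $q$ large enough we may translate $V(H)$ by a suitable positive multiple of the all-ones vector into the cube $[q,3q]^k$; translation preserves the $D_k$-adjacency relation, so the translated copy $H'$ is a subgraph of the induced subgraph $D_k[A_k(q)]$, hence of $I_k(q)$, the graph defined in Section~\ref{sec:ftdimk}. Now $\dim(I_k(q))\le k$ since $M_{k,0}(q)$ is a resolving set of size $k$, and $\dim(I_k(q))\ge k$ because $I_k(q)$ contains an interior point of the cube, a vertex of degree $3^k-1$, whereas every graph of metric dimension at most $k-1$ has maximum degree at most $3^{k-1}-1<3^k-1$ by \cite{gkl}. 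Thus $\dim(I_k(q))=k$ and $I_k(q)$ contains a copy of $H$, completing the proof.

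The ``only if'' direction is essentially just the triangle inequality, the one mild nuisance being the bookkeeping for disconnected $G$. The substance of the ``if'' direction is the lower bound $\dim(I_k(q))\ge k$: one must rule out that the degree-$(3^k-1)$ vertices forced by the cube could be packaged into a graph of smaller metric dimension, which is exactly the maximum-degree bound of \cite{gkl} recalled in the excerpt. With that bound and the explicit resolving set $M_{k,0}(q)$ already available, what remains is the routine check that translating $H$ into $[q,3q]^k$ preserves adjacency and lands inside $I_k(q)$; so I expect the main conceptual obstacle, were the earlier machinery not in place, to be the construction of a finite host graph whose metric dimension is provably exactly $k$ rather than smaller.
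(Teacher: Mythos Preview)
The paper does not give its own proof of this lemma; it is quoted verbatim from \cite{gkl} and invoked as a black box, so there is nothing in the present paper to compare your argument against. That said, your proof is correct and is essentially the standard one: the distance-vector map $u\mapsto d_S(u)$ embeds any graph of metric dimension $k$ into $D_k$ by the triangle inequality, and conversely any finite $H\subseteq D_k$ translates into the cube $A_k(q)\subseteq I_k(q)$, with $\dim(I_k(q))=k$ pinned down by the resolving set $M_{k,0}(q)$ and the degree bound $\Delta\le 3^{\dim}-1$. Your handling of disconnected $G$ is slightly informal but sound (components with no landmark from $S$ are singletons, and the rest embed into lower-dimensional slices of $D_k$ that can be translated apart). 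This is exactly the argument one finds in \cite{gkl}, and the construction $I_k(q)$ recounted in Section~\ref{sec:ftdimk} of the present paper is there precisely to make this direction go through.
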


We use this fact in the next proof to generalize the result from \cite{gkl} that the maximum possible degree of a vertex in a graph of metric dimension $k$ is $3^k - 1$.

\begin{thm}
    For any vertex $v$ in a graph of metric dimension $k$, the maximum possible number of vertices $u \neq v$ with $\dist(u, v) \le j$ is equal to $(2j+1)^{k}-1$.
\end{thm}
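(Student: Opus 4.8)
The plan is to prove matching upper and lower bounds, the upper bound by a counting argument on distance vectors and the lower bound by exhibiting the graph $I_k(q)$ from Section~\ref{sec:ftdimk}.

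\textbf{Upper bound.} I would fix a graph $G$ of metric dimension $k$, a resolving set $S=\{s_1,\dots,s_k\}$, and an arbitrary vertex $v$. The point is that any $u$ with $\dist(u,v)\le j$ has each coordinate of its distance vector pinned down within a window of width $2j$ around the corresponding coordinate of $d_S(v)$: by the triangle inequality $|\dist(u,s_i)-\dist(v,s_i)|\le\dist(u,v)\le j$ whenever these distances are finite, and if $s_i$ lies in a different component from $v$ then (since $\dist(u,v)<\infty$ forces $u$ into $v$'s component) $\dist(u,s_i)=\infty=\dist(v,s_i)$. So in every case coordinate $i$ of $d_S(u)$ takes at most $2j+1$ values once $v$ is fixed, hence $d_S(u)$ takes at most $(2j+1)^k$ values. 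Since $S$ resolves $G$, the map $u\mapsto d_S(u)$ is injective on the set $\{u:\dist(u,v)\le j\}$, which contains $v$; therefore that set has size at most $(2j+1)^k$, giving at most $(2j+1)^k-1$ vertices $u\ne v$ within distance $j$ of $v$.

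\textbf{Lower bound.} For sharpness I would reuse the graph $I_k(q)$ defined just before Theorem~\ref{thm:dimkdeg}, with the choice $q=j$ (the case $j=0$ being trivial, as then the bound is $0$). Recall that $I_k(q)$ has metric dimension exactly $k$, and that its vertex set contains the cube $A_k(q)=[q,3q]^k\cap\mathbb{Z}_{\ge 0}^k$. I would take $v$ to be the center vertex $(2q,\dots,2q)$ of this cube. For any lattice point $y$ with $\max_i|y_i-2q|\le j=q$, each coordinate $y_i$ lies in $[q,3q]$, so $y\in A_k(q)\subseteq V(I_k(q))$; moreover the monotone step-by-step walk in $D_k$ from $v$ to $y$ (decreasing or increasing each coordinate by at most one per step toward $y_i$) has length $\max_i|y_i-2q|\le j$ and stays inside the sub-box with opposite corners $v$ and $y$, which is contained in $A_k(q)$ and hence in $I_k(q)$. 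Thus $\dist_{I_k(q)}(v,y)\le j$ for all $(2j+1)^k$ such points $y$, one of which is $v$ itself, so $v$ has at least $(2j+1)^k-1$ vertices within distance $j$. Combining with the upper bound gives exactly $(2j+1)^k-1$.

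\textbf{Main obstacle.} The upper bound is routine. The one genuinely delicate point is the distance verification in the lower bound: distances in the \emph{induced} subgraph $I_k(q)$ are only guaranteed to be at least the $\ell_\infty$ distances in $D_k$, so I must check that no $\ell_\infty$-geodesic between two cube points is destroyed when passing to $I_k(q)$. The resolving observation is that such a geodesic can be taken to remain inside the cube $A_k(q)$, which is precisely why $v$ should be chosen to be the \emph{center} of the cube: then the entire ball of radius $j=q$ about $v$ lies within $[q,3q]^k$, so every needed shortest path survives in $I_k(q)$.
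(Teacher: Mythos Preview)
Your argument is correct and follows essentially the same route as the paper: the upper bound is the same triangle-inequality count on distance vectors, and the lower bound centers a $(2j+1)^k$-cube inside a graph of metric dimension $k$. The only cosmetic difference is that the paper invokes Lemma~\ref{dk_lem} to embed the cube $[0,2j]^k$ (centered at $(j,\dots,j)$) into some graph of metric dimension $k$, whereas you use the explicit graph $I_k(j)$ and its cube $A_k(j)=[j,3j]^k$ (centered at $(2j,\dots,2j)$); both constructions come from the paper's own toolbox and your care in checking that the monotone geodesic stays inside $A_k(j)$ is exactly what is needed.
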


\begin{proof}
    Suppose that $G$ is a graph of metric dimension $k$. Let $S$ be a resolving set of $G$ with $|S| = k$, and embed $G$ into $D_k$ with respect to some ordering of $S$. For any vertices $u \neq v$ in $G$ with $\dist(u, v) \le j$, all coordinates of $u$ and $v$ in the embedding must differ by at most $j$. Thus, for any fixed vertex $v$, there are at most $(2j+1)^k-1$ vertices $u$ with $\dist(u, v) \le j$.
    To see that this bound can be attained, consider an induced subgraph $H$ of $D_k$ with order $(2j+1)^k$ on the vertices with all coordinates at most $2j$. The distance of the vertex $v$ with all-$j$ coordinates to all other $(2j+1)^k-1$ vertices of $H$ is at most $j$, and by Lemma~\ref{dk_lem} there is a graph $H'$ of metric dimension $k$ that contains $H$. Hence in $H'$ the number of vertices whose distance to $v$'s counterpart $v'$ do not exceed $j$, excluding $v'$ itself, is at least $(2j+1)^k-1$.
\end{proof}

In order to get the past upper bounds on the fault-tolerant metric dimension and the $(k+1)$-metric dimension, the papers \cite{ftdim08} and \cite{ftdim24} used an upper bound on the maximum possible number of vertices with distance at most $j$ to a given landmark. 

\begin{lem}\label{old_distj_bound} \cite{ftdim08}
    For any vertex $v$ in a resolving set of size $k$, the possible number of vertices $u \neq v$ with $\dist(u, v) \le j$ is at most $1+j(2j+1)^{k-1}$.
\end{lem}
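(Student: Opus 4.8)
The plan is to exploit the injective embedding of vertices into distance vectors together with the triangle inequality, in the same spirit as the degree bound in Theorem~\ref{thm:dimkdeg} but with a radius-$j$ ball in place of a neighborhood. Let $S = \{v = w_1, w_2, \dots, w_k\}$ be a resolving set of size $k$ containing $v$. Since $S$ resolves $G$, the map $u \mapsto d_S(u) = (\dist(u,w_1),\dots,\dist(u,w_k))$ is injective on $V(G)$, so it suffices to bound the number of distinct vectors $d_S(u)$ that can arise from vertices $u$ with $\dist(u,v) \le j$.

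I would then bound the number of admissible vectors one coordinate at a time. The first coordinate equals $\dist(u,v)$; separating out $v$ itself, which is the unique vertex at distance $0$ from $v$, every other admissible $u$ has first coordinate in $\{1,\dots,j\}$, giving at most $j$ choices. For each $i \ge 2$, the triangle inequality yields $|\dist(u,w_i) - \dist(v,w_i)| \le \dist(u,v) \le j$, so $\dist(u,w_i)$ ranges over an interval of at most $2j+1$ integers centered at the fixed value $\dist(v,w_i)$; in particular at most $2j+1$ choices. (If $\dist(v,w_i) = \infty$, then any $u$ with $\dist(u,v) \le j$ lies in the same component as $v$, forcing $\dist(u,w_i) = \infty$ as well, which is still at most $2j+1$ possibilities, so the present paper's allowance of disconnected graphs causes no trouble.) Multiplying the per-coordinate bounds, the number of vertices $u \neq v$ with $\dist(u,v) \le j$ is at most $j(2j+1)^{k-1}$, which is even slightly stronger than the asserted bound $1 + j(2j+1)^{k-1}$; counting $v$ itself as well recovers exactly the stated form.

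There is no serious obstacle here: the argument is a direct product bound. The only point requiring care is the first coordinate, where one must use that $v$ is the unique vertex at distance $0$ from itself rather than naively allowing all $j+1$ values $\{0,\dots,j\}$ (which would yield only the weaker $(j+1)(2j+1)^{k-1}$), together with the routine check that the triangle-inequality step degrades gracefully in the presence of infinite distances, as noted above.
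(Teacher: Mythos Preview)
Your argument is correct. Note, however, that the paper does not supply its own proof of this lemma: it is quoted verbatim from \cite{ftdim08} and stated without proof, so there is no in-paper argument to compare against directly. That said, your product-of-ranges bound via the triangle inequality is exactly the mechanism the paper deploys in the very next theorem to sharpen this estimate (fixing the $v$-coordinate to the exact value $j$ rather than the range $\{1,\dots,j\}$ to get $(2j+1)^{k-1}$ at each distance $j$), so your approach is entirely in line with the paper's methods. As you observe, your bound $j(2j+1)^{k-1}$ for $u\neq v$ is already one better than the stated $1+j(2j+1)^{k-1}$; the extra $+1$ in the cited form presumably comes from counting $v$ itself.
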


In the following results, we obtain an improvement on the bound in Lemma~\ref{old_distj_bound}. In particular, we determine the exact value of the maximum possible number of vertices $u \neq v$ with $\dist(u, v) \le j$.

\begin{thm}
    For any vertex $v$ in a resolving set of size $k$, the maximum possible number of vertices $u \neq v$ with $\dist(u, v) = j \ge 1$ is equal to $(2j+1)^{k-1}$. 
\end{thm}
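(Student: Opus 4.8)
The plan is to establish the upper bound by a short counting argument on distance vectors, and to obtain matching sharpness directly from the graph $I_k(q)$ already introduced above.

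\textbf{Upper bound.} Let $G$ have a resolving set $S=\{s_1,\dots,s_k\}$ with $v\in S$, and assume without loss of generality that $v=s_1$. Associate to each vertex $u$ its distance vector $(\dist(u,s_1),\dots,\dist(u,s_k))$; distinct vertices get distinct vectors since $S$ resolves $G$. Fix $j\ge 1$ and let $u\ne v$ satisfy $\dist(u,v)=j$. Then the first coordinate of $u$ equals $\dist(u,s_1)=j$, and for each $i\ge 2$ the triangle inequality gives $|\dist(u,s_i)-\dist(v,s_i)|\le\dist(u,v)=j$, so the $i$-th coordinate of $u$ lies in a set of at most $2j+1$ integers determined by $v$ alone. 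Hence all vertices at distance $j$ from $v$ share the same first coordinate and have pairwise distinct vectors differing only in the last $k-1$ coordinates, each ranging over at most $2j+1$ values, so there are at most $(2j+1)^{k-1}$ of them.

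\textbf{Sharpness.} For $k\ge 2$, use $I_k(q)$ with any $q\ge j+1$: it has metric dimension $k$, the set $M_{k,0}(q)$ is a resolving set of size $k$, and, as recorded in the discussion of $I_k(q)$ above, every landmark $v$ has exactly $(2j+1)^{k-1}$ vertices $u\ne v$ with $\dist(u,v)=j$ whenever $1\le j\le q-1$. For $k=1$ the asserted maximum is $1$, attained by the path on $j+2$ vertices with one endpoint taken as the (size-one) resolving set. Combined with the upper bound, this shows the maximum equals $(2j+1)^{k-1}$.

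\textbf{Where the work is.} There is no substantial obstacle: the sharpness half is essentially already supplied by the known properties of $I_k(q)$, and the only thing to notice in the upper bound is that the first coordinate of every candidate vertex is forced to equal $j$ because $v$ itself is a landmark. This is precisely what saves one factor of $2j+1$ relative to the bound $(2j+1)^k-1$ for an arbitrary vertex in a graph of metric dimension $k$ proved just above. If one prefers a sharpness argument not citing $I_k(q)$, one can instead take the induced subgraph of $D_k$ on the vectors $(j,x_2,\dots,x_k)$ with each $x_i\in\{0,\dots,2j\}$, augmented by suitable axis landmarks, and apply Lemma~\ref{dk_lem}; but that route requires separately verifying that the chosen landmarks form a minimum resolving set, so invoking $I_k(q)$ is cleaner.
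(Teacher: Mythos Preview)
Your proof is correct and follows essentially the same approach as the paper: the upper bound is the same triangle-inequality count on distance vectors (the paper phrases it as an embedding into $D_k$, but the content is identical, with the forced coordinate at $v$ saving one factor of $2j+1$), and the sharpness is obtained from the same graphs $I_k(q)$. Your explicit treatment of the $k=1$ case is a harmless addition not present in the paper.
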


\begin{proof}
    For the upper bound, suppose that $G$ is a graph of metric dimension $k$. Let $S$ be a resolving set of $G$ with $|S| = k$, and embed $G$ into $D_k$ with respect to some ordering of $S$. Let $v$ be a landmark vertex, so one of its coordinates (say the $i^{\text{th}}$) is $0$ and the rest are positive. For any vertex $u \neq v$ in $G$ with $\dist(u, v) = j \ge 1$, all coordinates of $u$ and $v$ in the embedding must differ by at most $j$. The $i^{\text{th}}$ coordinate must be equal to $j$. Thus, there are at most $(2j+1)^{k-1}$ vertices $u$ with $\dist(u, v) = j$. 

    For the lower bound, we use the graphs $I_k(q)$ constructed in Section~\ref{sec:ftdimk} with sufficiently large $q$. As noted before, each landmark $v$ in $M_{k,0}(q)$ has $(2j+1)^{k-1}$ vertices $u \neq v$ in $I_k(q)$ with $\dist(u, v) = j$.
\end{proof}

Using the upper bound from the last result together with the same lower bound construction, we obtain the following corollary.

\begin{cor}
    For any vertex $v$ in a resolving set of size $k$, the maximum possible number of vertices $u \neq v$ with $\dist(u, v) \le j$ is equal to \[\sum_{i = 1}^j (2i+1)^{k-1} = \frac{2^{k-1}}{k}j^k (1+o(1)).\]
\end{cor}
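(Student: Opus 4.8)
The plan is to separate the exact identity from the asymptotic estimate. For the exact value, the upper bound is immediate from the theorem just proved: fix any graph $G$, any resolving set $S$ of size $k$, and any landmark $v \in S$. Since $\dist(u,v) \ge 1$ for every $u \neq v$, the set $\{u \neq v : \dist(u,v) \le j\}$ is the disjoint union over $i = 1, \dots, j$ of the sets $\{u : \dist(u,v) = i\}$, and the previous theorem bounds the $i$-th of these by $(2i+1)^{k-1}$. Summing gives $\sum_{i=1}^j (2i+1)^{k-1}$ as an upper bound on the number of vertices within distance $j$ of a landmark in a resolving set of size $k$.

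For the matching lower bound, I would use the graphs $I_k(q)$ from Section~\ref{sec:ftdimk} with $q$ chosen larger than $j$ (for instance $q = j+1$). As recorded in that section, for every $i$ with $1 \le i \le q-1$, each landmark $v \in M_{k,0}(q)$ has exactly $(2i+1)^{k-1}$ vertices $u \neq v$ with $\dist(u,v) = i$; moreover $M_{k,0}(q)$ is a resolving set of $I_k(q)$ of size $k$, so $v$ is indeed a vertex in a resolving set of size $k$. Because vertices at different distances from $v$ are automatically distinct, the number of $u \neq v$ with $\dist(u,v) \le j$ in $I_k(q)$ equals $\sum_{i=1}^j (2i+1)^{k-1}$, which meets the upper bound. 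This establishes the first equality in the statement.

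It remains to verify the asymptotic form. Here I would treat $k$ as fixed and let $j \to \infty$. Expanding, $(2i+1)^{k-1} = 2^{k-1} i^{k-1} + O(i^{k-2})$, and the standard estimates $\sum_{i=1}^j i^{k-1} = \tfrac{j^k}{k} + O(j^{k-1})$ and $\sum_{i=1}^j i^{k-2} = O(j^{k-1})$ then give $\sum_{i=1}^j (2i+1)^{k-1} = \tfrac{2^{k-1}}{k} j^k + O(j^{k-1}) = \tfrac{2^{k-1}}{k} j^k (1 + o(1))$.

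I do not anticipate a genuine obstacle, since both ingredients — the exact per-distance count and the extremal graphs $I_k(q)$ — are already available. The only points requiring a little care are: first, noting that a single graph $I_k(q)$ simultaneously realizes the maximum number of vertices at every distance $1, \dots, j$ (rather than needing one graph per distance), which is exactly the content of the computation with $M_{k,0}(q)$, so that the sum of the individual maxima is actually attained; and second, stating the asymptotic cleanly, i.e., interpreting the $o(1)$ as $j \to \infty$ with $k$ held fixed.
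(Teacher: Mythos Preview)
Your proposal is correct and matches the paper's approach exactly: the paper simply notes that summing the per-distance upper bounds from the preceding theorem, together with the same lower-bound construction $I_k(q)$, yields the corollary. Your write-up is in fact more explicit than the paper's one-line justification, and your care in observing that a single $I_k(q)$ simultaneously realizes all the per-distance maxima is precisely the point needed for the sum to be attained.
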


\section{An equivalence between a problem in extremal set theory and a problem about edge metric dimension}\label{sec:equiv}

Geneson \cite{gmd} investigated the maximum possible clique number of a graph with edge metric dimension at most $k$. He showed that the answer is at most $O(2^{k/2})$ and at least $k+1$. The upper bound follows since there are at most $2^k$ possible distance vectors for the $\binom{n}{2}$ edges of a clique of size $n$ in a graph of edge metric dimension $k$, and every edge in the clique must have a unique distance vector. Later, Geneson et al \cite{gkl} used the probabilistic method to show the existence of graphs of edge metric dimension at most $k$ with cliques of size $\Omega((\frac{8}{3})^{k/6})$. Let $\mc(k)$ denote the maximum possible clique number of a graph with edge metric dimension at most $k$. Then, we have $\mc(k) = O(2^{k/2})$ and $\mc(k) = \Omega((\frac{8}{3})^{k/6})$.

Let $\ek(k)$ denote the maximum possible size of a family of subsets of $\left\{1,2,\dots,k\right\}$ such that all pairwise unions are distinct. The study of $\ek(k)$ was initiated by Erd\H{o}s and Kleitman \cite{ek74}, and they claimed that there exist $0 < \epsilon_1 < \epsilon_2 < 1$ such that $(1+\epsilon_1)^k < \ek(k) < (1+\epsilon_2)^k$ for all $k$ sufficiently large. They offered $\$25$ for finding $\epsilon_1, \epsilon_2$ with $\epsilon_2 / \epsilon_1 \le 1.01$. 

\begin{thm}
    For all positive integers $k$, we have $\mc(k) = \ek(k)$.
\end{thm}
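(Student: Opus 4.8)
The plan is to establish the two inequalities $\mc(k) \le \ek(k)$ and $\ek(k) \ge \mc(k)$ by constructing, in each direction, a correspondence between cliques in graphs of small edge metric dimension and families of subsets with distinct pairwise unions. The bridge between the two problems comes from the formula $\dist(\{u,v\},w) = \min(\dist(u,w),\dist(v,w))$: if we think of each vertex $w$ of an edge resolving set as assigning to each clique vertex $x$ a ``level'' $\dist(x,w)$, then the distance vector of an edge $\{u,v\}$ is the coordinatewise minimum of the level vectors of $u$ and $v$, so distinguishing edges of a clique amounts to distinguishing pairwise minima, which is the combinatorial shadow of distinguishing pairwise unions.

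First I would prove $\mc(k) \le \ek(k)$. Take a graph $G$ with $\edim(G) \le k$ containing a clique $Q$ with $|Q| = \mc(k)$, and fix an edge resolving set $S = \{w_1,\dots,w_k\}$. For a vertex $x \in Q$, write $\ell_i(x) = \dist(x,w_i)$; since all vertices of $Q$ are pairwise adjacent, $\dist(x,w_i)$ and $\dist(y,w_i)$ differ by at most $1$ for $x,y \in Q$, so within $Q$ each coordinate $\ell_i$ takes at most two consecutive integer values. After a per-coordinate shift we may assume $\ell_i(x) \in \{0,1\}$ for all $x \in Q$, and then the distance vector of an edge $\{u,v\}$ restricted to $S$ is the coordinatewise minimum of the $0/1$ vectors $\ell(u)$ and $\ell(v)$; equivalently, identifying a $0/1$ vector with the set of coordinates where it is $0$, the edge $\{u,v\}$ corresponds to the \emph{union} of the sets of $u$ and $v$. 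Since $S$ edge-resolves $Q$, these unions are all distinct, so the family $\{A_x : x \in Q\} \subseteq 2^{\{1,\dots,k\}}$ (where $A_x$ is the zero-set of $\ell(x)$) has distinct pairwise unions and distinct elements, giving $\mc(k) = |Q| \le \ek(k)$.

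For the reverse inequality I would build a graph realizing an optimal Erd\H{o}s--Kleitman family. Let $\mathcal{F} = \{A_1,\dots,A_m\} \subseteq 2^{\{1,\dots,k\}}$ with all pairwise unions distinct and $m = \ek(k)$. Construct $G$ by taking a clique on vertices $x_1,\dots,x_m$ together with $k$ landmark vertices $w_1,\dots,w_k$, and joining $w_i$ to exactly those $x_j$ with $i \in A_j$ (one can also take this as the natural bipartite-plus-clique construction analogous to $H_k$); then $\dist(x_j,w_i) = 1$ if $i \in A_j$ and $\dist(x_j,w_i) = 2$ otherwise, so the distance vector of the clique edge $\{x_j,x_{j'}\}$ records $A_j \cup A_{j'}$ via its set of coordinates equal to $1$. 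Distinctness of pairwise unions makes $\{w_1,\dots,w_k\}$ distinguish all clique edges; the remaining work is a routine check that this set also resolves all non-clique edges (edges incident to the $w_i$), which one arranges either by adding a small gadget or by observing the landmark edges have a $0$ coordinate that the clique edges lack. Then $\edim(G) \le k$ and $G$ has a clique of size $m$, so $\mc(k) \ge \ek(k)$, completing the proof.

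The main obstacle I anticipate is the bookkeeping in the reverse direction: ensuring that the landmark set $\{w_1,\dots,w_k\}$ resolves \emph{all} edges of $G$, not just the clique edges, while keeping $\edim(G)$ at exactly $k$ and not accidentally inflating it --- this may force adding auxiliary vertices carefully, and one must verify the construction does not create two landmark-incident edges with the same distance vector. A secondary subtlety is the per-coordinate normalization in the forward direction: one must confirm that shifting each coordinate of the level vectors preserves the property of the coordinatewise minimum determining the edge, and that the empty set and full set are handled consistently so the bijection with subsets (rather than just with distinct-union families of nonempty proper subsets) is exact.
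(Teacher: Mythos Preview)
Your proposal is correct and follows essentially the same approach as the paper's proof: both directions use the observation that within a clique each landmark sees at most two consecutive distance values, so clique vertices encode as $0/1$ vectors (equivalently subsets) and edge distance vectors encode pairwise unions; for the reverse direction the paper builds exactly the clique-plus-landmarks graph you describe, with $w_i \sim x_j$ iff $i \in A_j$, and verifies directly (without any auxiliary gadget) that $\{w_1,\dots,w_k\}$ edge-resolves the whole graph. Your anticipated obstacle about landmark-incident edges is handled just as you guessed: such edges carry a $0$ in the coordinate of the incident landmark, which separates them from clique edges, and two edges $\{w_i,x_j\}$, $\{w_i,x_{j'}\}$ are distinguished because (since $i$ lies in both $A_j$ and $A_{j'}$ by adjacency) their distance vectors record $A_j$ and $A_{j'}$ respectively on the remaining coordinates.
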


\begin{proof}
    Let $\mathcal{F} = \left\{S_1,\dots,S_{\ek(k)}\right\}$ be a family of subsets of $\left\{1,2,\dots,k\right\}$ of size $\ek(k)$ such that all pairwise unions of elements of $\mathcal{F}$ are distinct. From $\mathcal{F}$, we construct a graph $G$. $G$ has $k+\ek(k)$ vertices $u_1,\dots,u_k$ and $v_1,\dots,v_{\ek(k)}$. The vertices $v_1,\dots,v_{\ek(k)}$ form a clique. There are also edges between $u_i$ and $v_j$ for every pair $i, j$ such that $i$ is an element of the subset $S_j$. 

    For each edge in $G$, we consider the distance vector with respect to $u_1, \dots, u_k$. For any edge of the form $\left\{u_i, v_j\right\}$, there will be a $0$ in the coordinate for $u_i$, but all edges of the form $\left\{v_i,v_j\right\}$ will have no $0$ coordinates. All edges of the form $\left\{v_i,v_j\right\}$ have $1$ in the coordinate corresponding to $u_t$ if $t \in S_i$ or $t \in S_j$, and otherwise $\left\{v_i,v_j\right\}$ has $2$ in the coordinate corresponding to $u_t$. Since all pairwise unions in $\mathcal{F}$ are distinct, all edges of the form $\left\{v_i,v_j\right\}$ have distinct distance vectors. 

    For any edge of the form $\left\{u_i, v_j\right\}$, there will be a $0$ in the coordinate for $u_i$, there will be a $1$ in the coordinate for each $u_t$ such that $t \neq i$ and $t \in S_j$, and there will be a $2$ in the coordinate for each $u_t$ such that $t \neq i$ and $t \not \in S_j$.
    Every pair of edges $\{u_i,v_j\},\{u_{i'},v_{j'}\}$ where $i\ne i'$ have different distance vectors because their $0$ coordinates are at different positions $i$ and $i'$. For any pair of edges $\{u_i,v_j\},\{u_i,v_{j'}\}$ with $j\ne j'$, let their distance vectors be $x$ and $y$, respectively. Both vectors have a single $0$-coordinate at position $i$, so we transform them to $x'$ and $y'$ by replacing the $0$-coordinate at position $i$ with a $1$. Hence, $x'$ has ones only at positions $s\in S_j$, and $y'$ has ones only at positions $s\in S_{j'}$. Since $S_j$ and $S_{j'}$ are distinct, $x'$ and $y'$ are distinct, and $x$ and $y$ are distinct. Therefore, $G$ has edge dimension at most $k$ and clique number at least $\ek(k)$, so $\mc(k)\ge \ek(k)$.
    

    For the other direction, let $G$ be an arbitrary graph of edge metric dimension at most $k$ and clique number $\mc(k)$, so $G$ has $\mc(k)$ vertices that form a clique and $k$ vertices $u_1, \dots, u_k$ that form an edge resolving set. Note that some of the vertices in the edge resolving set may be in the clique. Consider the distance vectors of the edges in the clique with respect to the vertices $u_1, \dots, u_k$. There are at most $2$ possible values for each coordinate among the edges in the clique, since all vertices in the clique are neighbors. For each vertex $u_i$, let $p_i$ be the least distance of any vertex in the clique to $u_i$, so the only possible values of the coordinate corresponding to $u_i$ among the edges in the clique are $p_i$ and $p_i + 1$. 

    From $G$, we construct a family $\mathcal{F}$ of subsets of $\left\{1, \dots, k\right\}$ for which all pairwise unions are distinct. In particular, we create a subset $S_j$ of $\left\{1,\dots,k\right\}$ for each vertex $v_j$ in the clique of size $\mc(k)$. If $v_j$ has distance $p_i$ to $u_i$, then we include $i$ in $S_j$. Otherwise, if $v_j$ has distance $p_{i}+1$ to $u_i$, then we do not include $i$ in $S_j$. Thus, any edge $\left\{v_x, v_y\right\}$ in the clique has distance $p_i$ to $u_i$ if $i \in S_x \cup S_y$, and otherwise $\left\{v_x, v_y\right\}$ has distance $p_i + 1$ to $u_i$. Since all edges in the clique have unique distance vectors, all pairwise unions in $\mathcal{F}$ must be distinct. Thus, $\ek(k) \ge \mc(k)$, so $\mc(k) = \ek(k)$.
\end{proof}

\begin{cor}
\label{cor:ek-bound}
    We have $\ek(k) = O(2^{k/2})$ and $\ek(k) = \Omega((\frac{8}{3})^{k/6})$.
\end{cor}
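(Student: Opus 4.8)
The plan is to simply combine the equivalence just established with the bounds on $\mc(k)$ that were recalled at the start of Section~\ref{sec:equiv}. The previous theorem shows that $\mc(k) = \ek(k)$ for every positive integer $k$. Geneson~\cite{gmd} proved $\mc(k) = O(2^{k/2})$, by observing that the $\binom{n}{2}$ edges of a clique of size $n$ in a graph of edge metric dimension at most $k$ must all receive distinct distance vectors with respect to a fixed edge resolving set of size $k$, and each coordinate of such a vector takes at most two values (since the clique vertices are pairwise adjacent), giving at most $2^k$ possibilities and hence $\binom{n}{2} \le 2^k$. Geneson et al~\cite{gkl} proved $\mc(k) = \Omega((\tfrac{8}{3})^{k/6})$ via the probabilistic method.

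First I would restate $\mc(k) = \ek(k)$ from the preceding theorem. Then, substituting this identity into the two inequalities $\mc(k) = O(2^{k/2})$ and $\mc(k) = \Omega((\tfrac{8}{3})^{k/6})$ yields $\ek(k) = O(2^{k/2})$ and $\ek(k) = \Omega((\tfrac{8}{3})^{k/6})$ immediately, which is exactly the claim.

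There is no real obstacle here: the corollary is a direct consequence of the equivalence theorem together with the already-cited bounds, so the only thing to be careful about is matching the quantifiers — the bound $\mc(k) = O(2^{k/2})$ is stated for graphs of edge metric dimension \emph{at most} $k$, which is precisely the quantity the equivalence theorem equates with $\ek(k)$, so no adjustment is needed. One could also remark that this shows any improvement on the extremal set-theoretic problem of Erd\H{o}s and Kleitman~\cite{ek74} would transfer to an improvement on the clique-number bound for edge metric dimension, and vice versa, but that observation is already made in the surrounding text.
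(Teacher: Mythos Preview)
Your proposal is correct and matches the paper's approach exactly: the corollary is stated without proof in the paper, being an immediate consequence of the equivalence $\mc(k)=\ek(k)$ combined with the bounds $\mc(k)=O(2^{k/2})$ and $\mc(k)=\Omega((\tfrac{8}{3})^{k/6})$ recalled at the start of Section~\ref{sec:equiv}.
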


In the next corollary, we bound the size of the largest element of a maximum family $\mathcal{F}$ of subsets of $\{1,2,\dots,k\}$ whose pairwise unions are distinct.

\begin{cor}
   Let $S$ be a largest-sized element of a maximum family $\mathcal{F}$ of subsets of $\{1,2,\dots,k\}$ whose pairwise unions are distinct. Both $|S|$ and $|\{1,2,\dots,k\}-S|$ are $\Theta(k)$.
\end{cor}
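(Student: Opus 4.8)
The plan is to split the statement into two independent one‑sided bounds, each deduced from the exponential lower bound $\ek(k) = \Omega((8/3)^{k/6})$ recorded in Corollary~\ref{cor:ek-bound}, together with the trivial upper bounds $|S| \le k$ and $|\{1,\dots,k\}-S| \le k$. Throughout, fix a maximum family $\mathcal{F}$ with distinct pairwise unions, so $|\mathcal{F}| = \ek(k)$, and a largest‑sized element $S \in \mathcal{F}$; nothing below depends on which maximum family or which largest element is chosen.

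First I would lower bound $|T|$, where $T = \{1,\dots,k\}-S$. The key observation is that, since $S$ itself lies in $\mathcal{F}$, the unions $A \cup S$ for $A \in \mathcal{F}-\{S\}$ must be pairwise distinct: if $A \cup S = B \cup S$ with $A \ne B$ and $A,B \ne S$, then $\{A,S\}$ and $\{B,S\}$ are distinct pairs of elements of $\mathcal{F}$ with equal union. Now $A \cup S = (A \cap T) \cup S$, and intersecting with $T$ recovers $A \cap T$, so $A \cup S$ determines $A \cap T$; hence $A \mapsto A \cap T$ is injective on $\mathcal{F}-\{S\}$, giving $\ek(k)-1 \le 2^{|T|}$. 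Combined with $\ek(k) = \Omega((8/3)^{k/6})$ this yields $|T| \ge \log_2(\ek(k)-1) \ge \tfrac{k}{6}\log_2\tfrac{8}{3} - O(1) = \Omega(k)$, and since $|T| \le k$ we conclude $|\{1,\dots,k\}-S| = |T| = \Theta(k)$.

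Next I would lower bound $s := |S|$. Because $S$ is a largest element, every member of $\mathcal{F}$ has size at most $s$, so $\ek(k) = |\mathcal{F}| \le \sum_{i=0}^{s}\binom{k}{i}$. If $s > k/2$ then $s = \Omega(k)$ is immediate, so assume $s \le k/2$; then the standard entropy estimate gives $\sum_{i=0}^{s}\binom{k}{i} \le 2^{kH(s/k)}$, where $H(x) = -x\log_2 x - (1-x)\log_2(1-x)$. Hence $kH(s/k) \ge \log_2\ek(k) \ge \tfrac{k}{6}\log_2\tfrac{8}{3} - O(1)$, so $H(s/k) \ge \tfrac{1}{6}\log_2\tfrac{8}{3} - o(1)$; as $H$ is continuous and increasing on $[0,1/2]$ with $H(0)=0$, this forces $s/k$ to stay bounded below by a fixed positive constant, i.e. $s = \Omega(k)$. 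Since $s \le k$, we get $|S| = \Theta(k)$, completing the proof.

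The argument is short, and I expect the only real subtlety to be the injectivity observation in the second paragraph — realizing that $A \mapsto A \cap T$ embeds $\mathcal{F}-\{S\}$ into the subsets of $T$, which is what produces the strong bound $\ek(k) \le 2^{|T|}+1$; a naive decomposition of $\mathcal{F}$ over the coordinates of $T$ only gives the far weaker $\ek(k) \le 2^{|T|}\ek(s)$, which is not enough to force $|T|$ to be linear. One trivial point to record is that $\ek(k) \ge 2$ (indeed $\ek(k)\ge k+1$, as witnessed by $\{\emptyset,\{1\},\dots,\{k\}\}$), so the ``$-1$'' causes no trouble.
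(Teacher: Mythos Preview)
Your proof is correct. Both you and the paper hinge on the same key observation for the $|T|$ bound, namely that $A \mapsto A \cap T$ is injective on $\mathcal{F}-\{S\}$ (the paper phrases this as ``the intersection $S''\cap T$ is unique''), yielding $\ek(k) \le 2^{|T|}+1$ and hence $|T| = \Omega(k)$. The approaches diverge slightly for the $|S|$ bound: you use the direct and more elementary inequality $\ek(k) \le \sum_{i\le s}\binom{k}{i}$ coming only from the fact that every member of $\mathcal{F}$ has size at most $s$, together with the entropy estimate; the paper instead refines the same injectivity bound by noting that each intersection $A \cap T$ has size at most $\min\{m,k-m\}$, obtaining the single inequality $|\mathcal{F}| \le 1 + \sum_{i=0}^{\min\{m,k-m\}}\binom{k-m}{i}$ and then ruling out both $m = o(k)$ and $k-m = o(k)$ from it. Your route is a bit cleaner and decouples the two halves; the paper's route is more unified but requires carrying the size constraint through the combinatorics.
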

\begin{proof}
    Suppose that $S$ contains $m$ elements, then $\mathcal{F}$ contains at most one other element $S'$ that is a subset of $S$, and every other element of $\mathcal{F}$ contains an element of $\{1,2,\dots,k\}-S$. Moreover, for every element $S''$ of $\mathcal{F}$ that is not $S$ nor $S'$ the intersection $S''\cap \left(\{1,2,\dots,k\}-S\right)$ is unique. We have
    $$
    |\mathcal{F}|\le 2+\binom{k-m}{1}+\binom{k-m}{2}+\dots+\binom{k-m}{\min\{m,k-m\}}=1+\sum_{i=0}^{\min\{m,k-m\}}\binom{k-m}{i}.
    $$

    If $k-m=o(k)$, then $|\mathcal{F}|\le1+2^{k-m}$ contradicts the fact that $\ek(k)=\Omega\left((\frac{8}{3})^{k/6}\right)$
from Corollary~\ref{cor:ek-bound}. If $m=o(k)$, then
$$
|\mathcal{F}|\le1+\sum_{i=0}^{\min\{m,k-m\}}\binom{k-m}{i}\le (m+1)\binom{k}{m}\le(m+1)\frac{k^m}{m!}
$$
and
$$
\log|\mathcal{F}|=O\left(\log(m+1)+m\log k-m\log m\right)=O\left(m\log\frac{k}{m}\right).
$$
Since $k/m=\omega(1)$, we have $\log|\mathcal{F}|=O(m\log(k/m))=o(mk/m)=o(k)$.
This contradicts the fact that $\ek(k)=\Omega\left((\frac{8}{3})^{k/6}\right)$.
\end{proof}

\section{Conclusion}\label{sec:conclusion}

In this paper, we proved a number of bounds on the maximum possible price of fault tolerance for metric dimension, edge metric dimension, and truncated metric dimension. These bounds were sharp up to the base of the exponent, but it remains to sharpen them further. 

We saw that $\ftxdim(G) \ge \xdim(G)+1$ for all graphs $G$ and variants $\xdim(G)$ of metric dimension. For arbitrary variants $\xdim(G)$ of metric dimension, what else can be said in general about $\ftxdim(G)$ with respect to $\xdim(G)$? The same question can be asked for special families of graphs $G$ like trees, bipartite graphs, $k$-regular graphs, and $k$-degenerate graphs, as well as for variants $\xdim(G)$ of metric dimension with certain properties. 

Based on the observations about fault tolerance and maximum degree in this paper, we have the following question. For what variants $\xdim(G)$ of metric dimension is it true that \[\lim_{k \rightarrow \infty} \left( \max_{G: \text{ } \xdim(G) = k} \frac{\log(\ftxdim(G))}{k} \right) \neq \lim_{k \rightarrow \infty} \left( \max_{G: \text{ } \xdim(G) = k} \frac{\log(\Delta(G))}{k} \right)?\] 

In order to prove our bounds for fault tolerance, we also proved a number of results about the maximum possible degree of vertices in graphs of a given metric dimension, edge metric dimension, and truncated metric dimension. We note that there is a related open problem \cite{gkl} which remains unsolved. Specifically, what is the maximum possible minimum degree of a graph of metric dimension $k$?

An upper bound of $3^{k-1}$ was obtained in \cite{gkl}. We conjecture that this is sharp, but matching lower bound constructions have only been found \cite{gkl} for $k \le 3$. To our knowledge, the analogous problem for edge metric dimension and truncated metric dimension is also unsolved.

In Section~\ref{sec:dim_extremal}, we proved results about the maximum possible number of vertices within a distance of $j$ of a given vertex in a graph of metric dimension $k$. With the same proofs, we obtain the following results for truncated metric dimension.

\begin{thm}
    For any vertex $v$ in a graph of $k$-truncated metric dimension $j$, the maximum possible number of vertices $u \neq v$ with $\dist(u, v) \le i$ is equal to $(2i+1)^{j}-1$ for all $i$ with $k \ge 3i-1$.
\end{thm}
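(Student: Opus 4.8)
The plan is to split the ``is equal to'' into the two obvious inequalities, and to observe that only the lower (achievability) direction uses the hypothesis $k\ge 3i-1$. For the upper bound, I would reduce to the metric‑dimension case: since $\dim_k(G)\ge\dim(G)$ for every graph $G$ and every $k$, a graph $G$ with $\dim_k(G)=j$ has $\dim(G)\le j$, so the (unconditional) upper bound already proved for the number of vertices within distance $i$ of a vertex in a graph of metric dimension $\dim(G)$ yields at most $(2i+1)^{\dim(G)}-1\le(2i+1)^j-1$ vertices $u\ne v$ with $\dist(u,v)\le i$. Alternatively one can argue self‑containedly: fix a $k$‑truncated resolving set $S$ of size $j$; for each $s\in S$ the value $\min(k+1,\dist(u,s))$ ranges over at most $2i+1$ numbers as $u$ ranges over the ball $B_i(v)=\{u:\dist(u,v)\le i\}$, because $\dist(\cdot,s)$ varies by at most $2i$ on $B_i(v)$ by the triangle inequality and truncation is monotone; since the truncated distance vectors of vertices in $B_i(v)$ are pairwise distinct, $|B_i(v)|\le(2i+1)^j$. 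Either way, this direction needs no restriction on $k$.

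For sharpness, assuming $k\ge 3i-1$, I would use the graph $I_j(i)$ from Section~\ref{sec:ftdimk}, i.e., the construction there taken in dimension $j$ with parameter $q=i$. Recall from that section that $I_j(i)$ has metric dimension $j$ (so $\dim_k(I_j(i))\ge j$), that $M_{j,0}(i)$ is a resolving set, and that the distance from any vertex of $I_j(i)$ to the $\ell$‑th landmark equals the $\ell$‑th coordinate of that vertex; since all coordinates lie in $[0,3i]$ and $3i\le k+1$, truncation at $k+1$ changes no distance vector, so $M_{j,0}(i)$ is in fact a $k$‑truncated resolving set and hence $\dim_k(I_j(i))=j$. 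Now take $v$ to be the vertex all of whose coordinates equal $2i$; it lies in the core $A_j(i)=[i,3i]^j$. Its radius‑$i$ ball is exactly $A_j(i)$: the core is an axis‑aligned box, so distances inside it agree with the Chebyshev distance of $D_j$, giving $[i,3i]^j\subseteq B_i(v)$, while every vertex outside the core has some coordinate at most $i-1$ and hence Chebyshev (thus graph) distance more than $i$ from $v$. Therefore $|B_i(v)|=(2i+1)^j$, so $v$ has exactly $(2i+1)^j-1$ vertices other than itself within distance $i$, meeting the upper bound.

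The main obstacle is entirely in checking the lower‑bound construction against the explicit description of $I_j(i)$ recalled in Section~\ref{sec:ftdimk}: that $M_{j,0}(i)$ really remains a $k$‑truncated resolving set — this is precisely where $k\ge 3i-1$ is used, via $3q\le k+1$ with $q=i$ — and that the radius‑$i$ ball around the chosen vertex is exactly the core cube, with no shortcut pulling it into the $M_{j,\ell}(i)$ ``tails'' and no tail vertex lying within distance $i$ of $v$. These all follow from the recalled facts (distances to landmarks are coordinates, the core is convex in $D_j$, tail vertices have a coordinate at most $i-1$), but they should be spelled out rather than asserted.
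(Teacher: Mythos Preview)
Your proposal is correct and follows essentially the paper's approach: the upper bound via the ``at most $2i+1$ values per coordinate'' argument is exactly the argument of Section~\ref{sec:dim_extremal} (and your reduction via $\dim_k(G)\ge\dim(G)$ is an equally valid shortcut), while the lower bound via an explicit $D_j$-based witness is what the paper means by ``with the same proofs.'' The only difference worth noting is that for sharpness you use the graph $I_j(i)$ from Section~\ref{sec:ftdimk} rather than the cube $[0,2i]^j$ together with Lemma~\ref{dk_lem} as in Section~\ref{sec:dim_extremal}; your choice is in fact the cleaner one for the truncated setting, since the landmark distances in $I_j(i)$ are visibly at most $3i\le k+1$, which pinpoints exactly where the hypothesis $k\ge 3i-1$ is used, whereas Lemma~\ref{dk_lem} by itself does not control the $k$-truncated metric dimension of the ambient graph it produces.
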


\begin{thm}
    For any vertex $v$ in a $k$-resolving set of size $j$, the maximum possible number of vertices $u \neq v$ with $\dist(u, v) = i \ge 1$ is equal to $(2i+1)^{j-1}$ for all $i$ with $k \ge 3i-1$. 
\end{thm}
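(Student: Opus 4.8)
The plan is to adapt, step by step, the proof of the non-truncated companion result (the maximum number of vertices at distance exactly $i$ from a vertex of a resolving set of size $j$), tracking where the $(k+1)$-truncation forces the hypothesis $k \ge 3i-1$. For the upper bound, let $G$ have a $k$-resolving set $S$ of size $j$ and pick $v \in S$. First I would record the estimate $|d_{S,k}(u)_t - d_{S,k}(v)_t| \le \dist(u,v)$, valid for every coordinate index $t$ and every vertex $u$: this holds because $|\dist(u,s_t) - \dist(v,s_t)| \le \dist(u,v)$ and $x \mapsto \min(k+1,x)$ is $1$-Lipschitz. Now fix $u$ with $\dist(u,v) = i \ge 1$. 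The coordinate of $d_{S,k}(v)$ corresponding to $v$ itself is $\min(k+1,0)=0$, so the corresponding coordinate of $d_{S,k}(u)$ equals the fixed value $\min(k+1,i)$; and by the estimate each of the other $j-1$ coordinates of $d_{S,k}(u)$ lies within $i$ of the corresponding coordinate of $d_{S,k}(v)$, hence takes at most $2i+1$ values. Since $S$ is a $k$-resolving set, $u \mapsto d_{S,k}(u)$ is injective, so at most $(2i+1)^{j-1}$ vertices are at distance exactly $i$ from $v$; this half needs no hypothesis on $k$.

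For the lower bound I would use the connected graph $I_j(i)$ from Section~\ref{sec:ftdimk} --- the member of the family $I_k(q)$ with dimension parameter $j$ and scale parameter $q=i$ (for $i=1$ this is precisely the graph used to prove Theorem~\ref{thm:dimkdeg}). Three properties suffice: (i) $M_{j,0}(i)$ is a resolving set of size $j$ whose landmarks realize the coordinates of the vertices as distance vectors; (ii) every coordinate of every vertex of $I_j(i)$ is at most $3i$, because the bulk $A_j(i)$ lies in $[i,3i]^j$ and the attaching layers have smaller coordinates, so every distance from a landmark to a vertex is at most $3i$; and (iii) each landmark $s_t \in M_{j,0}(i)$ has exactly $(2i+1)^{j-1}$ vertices at distance exactly $i$ from it, namely the vertices of $A_j(i)$ whose $t$-th coordinate equals the least possible value $i$. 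When $k \ge 3i-1$ we have $3i \le k+1$, so the $(k+1)$-truncation changes none of these distances; hence $M_{j,0}(i)$ is a $k$-resolving set of size $j$, and it contains a landmark with $(2i+1)^{j-1}$ vertices at distance $i$, matching the upper bound.

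The main obstacle is precisely this last step: one must keep every landmark-to-vertex distance at most $k+1$ so that truncation is inert, and $3i \le k+1$ --- that is, $k \ge 3i-1$ --- is exactly what this requires. It is sharp, since for $k = 3i-2$ the two vertices of $A_j(i)$ that differ only in one coordinate, with the values $3i$ and $3i-1$ there, receive the same truncated distance vector, so $M_{j,0}(i)$ is no longer a $k$-resolving set. The remaining ingredients --- that $M_{j,0}(i)$ resolves $I_j(i)$, that coordinates coincide with distances to the corresponding landmarks on the bulk, and that the number of vertices at distance exactly $i$ from a landmark is $(2i+1)^{j-1}$ --- are established in Section~\ref{sec:ftdimk} or follow from the same elementary $\ell_\infty$ geometry of $D_j$ used there, so they require no new argument.
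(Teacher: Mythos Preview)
Your proposal is correct and follows essentially the same approach as the paper, which simply asserts that the proof of the non-truncated analogue carries over; in particular your upper bound argument (embedding via the $k$-truncated distance vectors and counting coordinate possibilities) and your lower bound construction ($I_j(q)$ with landmark set $M_{j,0}(q)$) are exactly the tools the paper has in mind. Your choice of scale parameter $q=i$ and the verification that every landmark-to-vertex distance in $I_j(i)$ is at most $3i$ --- so that the truncation is inert precisely when $k\ge 3i-1$ --- makes explicit a detail the paper leaves implicit.
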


\begin{cor}
    For any vertex $v$ in a $k$-resolving set of size $j$, the maximum possible number of vertices $u \neq v$ with $\dist(u, v) \le i$ is equal to \[\sum_{r = 1}^i (2r+1)^{j-1} = \frac{2^{j-1}}{j}i^k (1+o(1))\] for all $i$ with $k \ge 3i-1$. 
\end{cor}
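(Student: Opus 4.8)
The plan is to derive both the exact value and the asymptotics from the immediately preceding theorem, which states that a vertex in a $k$-resolving set of size $j$ has at most (and, for a suitable graph, exactly) $(2r+1)^{j-1}$ vertices at distance \emph{exactly} $r$, whenever $k \ge 3r-1$.

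For the upper bound, fix a vertex $v$ in a $k$-resolving set of size $j$. Every vertex $u \neq v$ with $\dist(u,v) \le i$ has $\dist(u,v) \in \{1,2,\dots,i\}$, so these vertices partition according to their exact distance from $v$ into groups indexed by $r = 1,\dots,i$. Since $k \ge 3i-1 \ge 3r-1$ for each such $r$, the preceding theorem bounds the $r$-th group by $(2r+1)^{j-1}$, and summing over $r$ gives at most $\sum_{r=1}^{i}(2r+1)^{j-1}$.

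For the matching lower bound I would reuse the graph $I_j(q)$ from Section~\ref{sec:ftdimk} with $q = i$, exactly as in the proof of the preceding theorem: because $k \ge 3i-1$, all pairwise distances in $I_j(i)$ are at most $k+1$, so no distance is truncated and the size-$j$ resolving set $M_{j,0}(i)$ is in fact a $k$-truncated resolving set. As recorded in Section~\ref{sec:ftdimk}, a landmark $v \in M_{j,0}(i)$ has exactly $(2r+1)^{j-1}$ vertices at distance exactly $r$ for every $r$ with $1 \le r \le i$: for $r \le q-1$ these are supplied by the staircase of $M$-layers, and for $r = i$ they form the slice of $A_j(i)$ in which the coordinate corresponding to $v$ equals $i$. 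Hence $v$ has $\sum_{r=1}^{i}(2r+1)^{j-1}$ vertices within distance $i$, which matches the upper bound and establishes the exact value.

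The asymptotic estimate is then routine: for fixed $j$ and $i \to \infty$ we have $\sum_{r=1}^{i}(2r+1)^{j-1} = 2^{j-1}\sum_{r=1}^{i} r^{j-1}\,(1+O(1/r)) = \frac{2^{j-1}}{j}\,i^{j}\,(1+o(1))$, using $\sum_{r=1}^{i} r^{j-1} = \frac{i^{j}}{j}+O(i^{j-1})$ (so the exponent in the displayed formula is to be read as $i^{j}$). The only step requiring care is the lower bound: one must confirm that a \emph{single} graph --- here $I_j(i)$ --- simultaneously realizes all of the layer maxima $(2r+1)^{j-1}$ for $1 \le r \le i$ while remaining a $k$-truncated resolving set, and it is precisely the choice $q = i$ together with the hypothesis $k \ge 3i-1$ that keeps the whole construction within distance $k+1$ so that truncation has no effect.
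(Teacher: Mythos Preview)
Your proposal is correct and follows the paper's approach: the paper merely asserts that the Section~\ref{sec:dim_extremal} proofs carry over, which amounts to summing the per-distance upper bounds from the preceding theorem and reusing the $I_j(q)$ construction for the lower bound. Your careful choice $q=i$ (with the separate check of the $r=i$ layer coming from the boundary slice of $A_j(i)$) is precisely the adaptation needed to match the stated constraint $k \ge 3i-1$, and you correctly flag that the displayed exponent should read $i^j$ rather than $i^k$.
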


It remains to investigate the case when $k < 3i-1$.

\end{document}